\begin{document}
\newtheorem{theorem}{Theorem}
\newtheorem{proposition}[theorem]{Proposition}
\newtheorem{conjecture}[theorem]{Conjecture}
\def\theconjecture{\unskip}
\newtheorem{corollary}[theorem]{Corollary}
\newtheorem{lemma}[theorem]{Lemma}
\newtheorem{sublemma}[theorem]{Sublemma}
\newtheorem{observation}[theorem]{Observation}
\theoremstyle{definition}
\newtheorem{definition}{Definition}
\newtheorem{notation}[definition]{Notation}
\newtheorem{remark}[theorem]{Remark}
\newtheorem{question}[definition]{Question}
\newtheorem{questions}[definition]{Questions}
\newtheorem{example}[definition]{Example}
\newtheorem{problem}[definition]{Problem}
\newtheorem{exercise}[definition]{Exercise}

\numberwithin{theorem}{section} \numberwithin{definition}{section}
\numberwithin{equation}{section}

\def\earrow{{\mathbf e}}
\def\rarrow{{\mathbf r}}
\def\uarrow{{\mathbf u}}
\def\varrow{{\mathbf V}}
\def\tpar{T_{\rm par}}
\def\apar{A_{\rm par}}

\def\reals{{\mathbb R}}
\def\torus{{\mathbb T}}
\def\heis{{\mathbb H}}
\def\integers{{\mathbb Z}}
\def\naturals{{\mathbb N}}
\def\complex{{\mathbb C}\/}
\def\distance{\operatorname{distance}\,}
\def\support{\operatorname{support}\,}
\def\dist{\operatorname{dist}\,}
\def\Span{\operatorname{span}\,}
\def\degree{\operatorname{degree}\,}
\def\kernel{\operatorname{kernel}\,}
\def\dim{\operatorname{dim}\,}
\def\codim{\operatorname{codim}}
\def\trace{\operatorname{trace\,}}
\def\Span{\operatorname{span}\,}
\def\dimension{\operatorname{dimension}\,}
\def\codimension{\operatorname{codimension}\,}
\def\nullspace{\scriptk}
\def\kernel{\operatorname{Ker}}
\def\ZZ{ {\mathbb Z} }
\def\p{\partial}
\def\rp{{ ^{-1} }}
\def\Re{\operatorname{Re\,} }
\def\Im{\operatorname{Im\,} }
\def\ov{\overline}
\def\eps{\varepsilon}
\def\lt{L^2}
\def\diver{\operatorname{div}}
\def\curl{\operatorname{curl}}
\def\etta{\eta}
\newcommand{\norm}[1]{ \|  #1 \|}
\def\expect{\mathbb E}
\def\bull{$\bullet$\ }

\def\xone{x_1}
\def\xtwo{x_2}
\def\xq{x_2+x_1^2}
\newcommand{\abr}[1]{ \langle  #1 \rangle}

\newcommand{\Norm}[1]{ \left\|  #1 \right\| }
\newcommand{\set}[1]{ \left\{ #1 \right\} }
\def\one{\mathbf 1}
\def\whole{\mathbf V}
\newcommand{\modulo}[2]{[#1]_{#2}}
\def \essinf{\mathop{\rm essinf}}
\def\scriptf{{\mathcal F}}
\def\scriptg{{\mathcal G}}
\def\scriptm{{\mathcal M}}
\def\scriptb{{\mathcal B}}
\def\scriptc{{\mathcal C}}
\def\scriptt{{\mathcal T}}
\def\scripti{{\mathcal I}}
\def\scripte{{\mathcal E}}
\def\scriptv{{\mathcal V}}
\def\scriptw{{\mathcal W}}
\def\scriptu{{\mathcal U}}
\def\scriptS{{\mathcal S}}
\def\scripta{{\mathcal A}}
\def\scriptr{{\mathcal R}}
\def\scripto{{\mathcal O}}
\def\scripth{{\mathcal H}}
\def\scriptd{{\mathcal D}}
\def\scriptl{{\mathcal L}}
\def\scriptn{{\mathcal N}}
\def\scriptp{{\mathcal P}}
\def\scriptk{{\mathcal K}}
\def\frakv{{\mathfrak V}}
\def\C{\mathbb{C}}
\def\R{\mathbb{R}}
\def\Rn{{\mathbb{R}^n}}
\def\Sn{{{S}^{n-1}}}
\def\M{\mathcal{M}}
\def\N{\mathcal{N}}
\def\Q{{\mathbb{Q}}}
\def\Z{\mathbb{Z}}
\def\I{\mathcal{I}}
\def\D{\mathcal{D}}
\def\S{\mathcal{S}}
\def\supp{\operatorname{supp}}
\def\dist{\operatorname{dist}}
\def\essi{\operatornamewithlimits{ess\,inf}}
\def\esss{\operatornamewithlimits{ess\,sup}}
\author{Jarod Hart}
\address{Higuchi Biosciences Center\\University of Kansas\\ Lawrence, KS 66044\\United States}
\email{jvhart@ku.edu}
\author{Feng Liu}
\address{Feng Liu \\
        College of Mathematics and Systems Science\\
Shandong University of Science and Technology\\ Qingdao 266590\\ People's Republic of China}

\email{ liufeng860314@163.comn}

\author{Qingying Xue}
\address{Qingying Xue\\
        School of Mathematical Sciences\\
        Beijing Normal University \\
        Laboratory of Mathematics and Complex Systems\\
        Ministry of Education\\
        Beijing 100875\\
        People's Republic of China}
\email{qyxue@bnu.edu.cn}

\thanks{The second author was supported partly by NSFC (No. 11526122), SRF-SUST-RT (No. 2015RCJJ053), RAF-OYSSP (No. BS2015SF012) and SP-OYSTTT-CMSS (No. Sxy2016K01). The third named author was supported partly by NSFC (Nos. 11471041, 11671039) and NSFC-DFG (No. 11761131002).\\ \quad Corresponding author: Qingying Xue, email: qyxue@bnu.edu.cn}

\keywords{Local multlinear maximal and fractional maximal
functions, Sobolev spaces, continuity.}

\date{\today}
\title[{Regularity and continuity of local Multilinear Maximal}]{Regularity and continuity of local Multilinear Maximal type operators}
\maketitle

\begin{abstract}This paper will be devoted to study the regularity and continuity
properties of the following local multilinear fractional type maximal
operators, 
$$\mathfrak{M}_{\alpha,\Omega}(\vec{f})(x)=\sup\limits_{0<r<{\rm dist}(x,\Omega^c)}\frac{r^\alpha}{|B(x,r)|^m}\prod\limits_{i=1}^m\int_{B(x,r)}|f_i(y)|dy,\quad \hbox{for \ }0\leq\alpha<mn,$$
where $\Omega$ is a subdomain in $\mathbb{R}^n$, $\Omega^c=\mathbb{R}^n\backslash\Omega$ and $B(x,r)$
is the ball in $\mathbb{R}^n$ centered at $x$ with radius
$r$. Several new
pointwise estimates for the derivative of the local
multilinear maximal function $\mathfrak{M}_{0,\Omega}$ and the fractional maximal functions $\mathfrak{M}_{\alpha,\Omega}$  $(0<\alpha< mn)$ will be presented. These estimates will not only enable us to establish
certain norm inequalities for  these operators in Sobolev spaces, but also give us the opportunity  to obtain
the bounds of these operators on the Sobolev space with
zero boundary values. 
\end{abstract}

\section{Introduction}

Let $m$ be a positive integer and $0\leq\alpha<mn$. Suppose that $\Omega$ is a subdomain in $\mathbb{R}^n$ and  $\vec{f}=(f_1,\ldots,f_m)$ with each
$f_j\in L_{\rm loc}^1(\Omega)$, then the local multilinear 
fractional maximal operator $\mathfrak{M}_{\alpha,\Omega}$ is defined by 
$$\mathfrak{M}_{\alpha,\Omega}(\vec{f})(x)=\sup\limits_{0<r<{\rm dist}(x,\Omega^c)}\frac{r^\alpha}{|B(x,r)|^m}\prod\limits_{j=1}^m\int_{B(x,r)}|f_j(y)|dy,$$
where $\Omega^c=\mathbb{R}^n\backslash\Omega$ and $B(x,r)$ is a ball in $\mathbb{R}^n$ centered at $x$ with radius $r$.  When $\alpha=0$, we also use the notation $\mathfrak M_\Omega=\mathfrak M_{0,\Omega}$, in which case were refer to $\mathfrak M_\Omega$ as the multilinear local maximal operator or just maximal operator when the meaning is clear in context. The above supremum should be taken over all $r>0$ in the global case  $\Omega=\mathbb{R}^n$, which we then denote the operators above by $\mathfrak{M}_{\alpha}$ and $\mathfrak{M}$ when $\alpha=0$. It is easy to see that $\mathfrak M$ and $\mathfrak M_\alpha$ coincide with the $m$-linear Hardy-Littlewood maximal function \cite{LOPTT} and the fractional maximal function \cite{CX}, respectively. Moreover, in the linear case $m=1$, the operator $\mathfrak{M}$ reduces to the classical Hardy-Littlewood maximal operator $M$ and $\mathfrak{M}_{\alpha}$ reduces to the classical fractional maximal function $M_{\alpha}$ with $0<\alpha<n$.

It is well-know that the Hardy-Littlewood maximal operator $M$ is bounded on $L^p(\R^n)$ for $1<p\leq\infty$, and that as a consequence of the sublinearity of $M$ it is also continuous on these spaces.  It was shown by Kinnunen \cite{Ki} that the maximal operator $M$ is also bounded on $W^{1,p}(\R^n)$ for $1<p\leq\infty$, and it is tempting to immediately conclude that $M$ is continuous on these spaces too.  However, since weak derivative operators are not sublinear, the continuity of $M$ on $W^{1,p}(\R^n)$ does not follow immediately from boundedness.  This now leads us to a natural question
: Does $M$ map $W^{1,p} (\mathbb{R}^n)$ continuously into $W^{1,p}(\mathbb{R}^n)$?  This question was posed by Haj{\l}asz and Onninen in \cite[Question 3]{HO} where it was attributed to Iwaniec.  A complete answer for this continuity problem was first given by Lurio in \cite{Lu1}. Then, similar results of continuity were soon extended to a kind of bilinear maximal functions \cite{CM2} and to local maximal functions \cite{Lu2}.   To emphasize the need to address such subtle details, it should be noted that bounded non-sublinear operators need not to be continuous (see \cite{AL4} for example).  We extend this question to $\mathfrak M_\Omega$ and $\mathfrak M_{\alpha,\Omega}$, and show that these operators are indeed continuous on Sobolev spaces in many situations.

The regularity theory of the Hardy-Littlewood maximal operator generalizes to that of the operators $\mathfrak M_\Omega$ and $\mathfrak M_{\alpha,\Omega}$ in exactly the way one would expect, as long as $\alpha=0$, $\Omega=\R^n$, or $|\Omega|<\infty$.  This can be observed in the development of maximal operator regularity theory since the late 1990's, which is described in more detail later in this introduction.  However, there are initially unexpected and striking differences in the regularity theory for $\mathfrak M_{\alpha,\Omega}$ when $\alpha>0$ and $\Omega$ is a proper subdomain of $\R^n$ with infinite measure.  When $m=1$, Heikkinen, Kinnunen, Korvenp\"a\"a, and Tuominen showed pointwise gradient estimates \cite{HKKT} for $\mathfrak M_{\alpha,\Omega}$ for the local situation both when $|\Omega|<\infty$ and when $|\Omega|=\infty$, and the estimates proved for $|\Omega|<\infty$ are sufficient to conclude that $\mathfrak M_{\alpha,\Omega}$ is bounded on $W^{1,p}(\Omega)$ for $1<p<\infty$.  However, the pointwise estimates that can be applied when $|\Omega|=\infty$ are not readily applicable to Sobolev boundedness results.  In the infinite measure situation, it appears that there are no available results for the mapping, boundedness, or continuity properties of $\mathfrak M_{\alpha,\Omega}$.  We will also address this issue by providing conditions  sufficient for boundedness and continuity of $\mathfrak M_{\alpha,\Omega}$ whether $\Omega$ has finite or infinite measure.

Given the motivation above, we are interested in two types of results which are themselves very closely related.  They are:
\begin{itemize}
\item Gradient estimates -- Pointwise a.e. estimates for $|\nabla\mathfrak M_\Omega\vec f(x)|$ and $|\nabla\mathfrak M_{\alpha,\Omega}\vec f(x)|$,
\item Mapping properties -- Boundedness and continuity of $\mathfrak M_\Omega$ and $\mathfrak M_{\alpha,\Omega}$ from products of Sobolev spaces $W^{1,p_1}(\Omega)\times\cdots\times W^{1,p_m}$ into a Sobolev space $W^{1,q}(\Omega)$ (and in some cases $W_0^{1,p}(\Omega)$ rather than $W^{1,p}(\Omega)$).
\end{itemize}

The work in this article deals with three kind of variations on the traditional Hardy-Littlewood maximal operator:  Global $\Omega=\R^n$ versus local $\Omega\subsetneq\R^n$ theory (which can be further delineated to $\Omega\subsetneq\R^n$ with either $|\Omega|=\infty$ or $|\Omega|<\infty$), the critical index $\alpha=0$ versus fractional index $\alpha>0$ theory, and the linear $m=1$ versus multilinear $m>1$ theory.  Each of these presents various challenges, and different things are known about each situation.

The origins of these types of regularity estimates for maximal operators are in the global setting $\Omega=\R^n$, and they go back to the aforementioned work of Kinnunen \cite{Ki} for the critical $\alpha=0$ case and to Kinnunen and Saksman \cite{KS} for fractional operator where $\alpha>0$.  It was shown in those papers, respectively, that
\begin{align*}
&|\nabla Mf(x)|\lesssim  M(\nabla f)(x)\;\;\text{ and }\;\;\;|\nabla M_\alpha f(x)|\lesssim  M_\alpha(\nabla f)(x)
\end{align*}
a.e. $x\in\R^n$ for appropriate $\alpha$ and $f$.  Sobolev boundedness properties of the form $W^{1,p}(\R^n)\rightarrow W^{1,p}(\R^n)$ and $W^{1,p}(\R^n)\rightarrow W^{1,q}(\R^n)$ follow immediately.  The recent work of Liu and Wu \cite{LW1} extends these results to the multilinear settings.  They show analogous pointwise estimates for the gradient of $\mathfrak M$, and that it is bounded from $W^{1,p_1}(\R^n)\times\cdots\times W^{1,p_m}(\R^n)$ into $W^{1,q}(\R^n)$ for appropriate indices satisfying $\frac{1}{q}=\frac{1}{p_1}+\cdots+\frac{1}{p_m}$ (though they don't prove continuity of the operator on these spaces).

The first results addressing the local $\Omega\subsetneq\R^n$ theory were proved by Kinnunen and Lindqvist \cite{KL} for the critical $\alpha=0$ index operator $\mathfrak M_\Omega$.  They proved results exactly analogous to the global situation from the last paragraph, which are of the form
\begin{align*}
|\nabla \mathfrak M_\Omega f(x)|\lesssim\mathfrak M_\Omega (\nabla f)(x)
\end{align*}
a.e. $x\in\Omega$ for appropriate functions $f$.  This estimate also implies that $\mathfrak M_\Omega$ is bounded on $W^{1,p}(\Omega)$ for $1<p\leq\infty$.  Other notable work for the linear version of $\mathfrak M_\Omega$ with $\Omega\subsetneq\R^n$ include that of Luiro \cite{Lu2} and Haj{\l}asz and Onninen \cite{HO}

Less is known about the local theory $\Omega\subsetneq\R^n$ for the fractional operators $\mathfrak M_{\alpha,\Omega}$.  Though this aspect of the theory contains some surprising results and new challenges.  This can be observed in the recent work of Heikkinen, Kinnunen, Korvenp\"a\"a, and Tuominen \cite{HKKT}.  They showed that the local fractional maximal operator satisfies
\begin{align*}
|\nabla\mathfrak M_{\alpha,\Omega}f(x)|\lesssim \mathfrak M_{\alpha-1,\Omega}f(x)+\mathcal S_{\alpha-1,\Omega}f(x)
\end{align*}
and, if in addition $|\Omega|<\infty$, they show that
\begin{align*}
|\nabla\mathfrak M_{\alpha,\Omega}f(x)|\lesssim \mathfrak M_{\alpha,\Omega}(\nabla f)(x)+ \mathfrak M_{\alpha-1,\Omega}f(x)
\end{align*}
a.e. $x\in\Omega$ for appropriate functions $f$.  Here $\mathcal S_{\alpha,\Omega}f$ is the local spherical maximal operator, defined
$$\mathcal{S}_{\alpha,\Omega}f(x)=\sup\limits_{0<r<{\rm dist}(x,\Omega^c)}\frac{r^{\alpha}}{|\partial B(x,r)|}\int_{\partial B(x,r)}|f(y)|d\mathcal{H}^{n-1}(y),$$
where $d\mathcal{H}^{n-1}$ is the normalized $(n-1)$-dimensional Hausdorff measure.  It is surprising to see the extra terms appearing on the right hand side of these estimates, but they cannot be omitted entirely, which was shown in \cite{HKKT}.  It was also shown in \cite{HKKT}, as a consequence of these estimates, that $f\in L^p(\Omega)$ implies $|\nabla\mathfrak M_{\alpha,\Omega}f|\in L^q(\Omega)$.  If it is assumed in addition that $|\Omega|<\infty$, then they also show that $\mathfrak M_{\alpha,\Omega}$ maps $W^{1,p}(\Omega)$ into $W^{1,q}(\Omega)$ for appropriate indices $p$, $q$, and $\alpha$.

The topic of regularity of maximal operator has been population one lately, which has received a lot of attention from many authors.  In addition to the work already mentioned, we refer the readers to consult \cite{ACL,AL,CS,CM2,HM,Ko1,Ko2,Ku,LCW,LM,LW2,Lu1,Lu2,Ta}.

In this article, we extend the pointwise gradient estimates and Sobolev mapping properties of $\mathfrak M_\Omega$ and $\mathfrak M_{\alpha,\Omega}$ to the multlinear setting, which includes some improvements of the linear theory for the Sobolev mapping properties of $\mathfrak M_{\alpha,\Omega}$.  Our results for $\mathfrak M_\Omega$ are those that should be expected based on the local linear and the global multilinear ones.  In terms of mapping properties, we show that $\mathfrak M_\Omega$ is bounded from $W^{1,p_1}(\Omega)\times\cdots\times W^{1,p_m}(\Omega)$ into $W^{1,q}(\Omega)$ where $\frac{1}{q}=\frac{1}{p_1}+\cdots+\frac{1}{p_m}$.  In the linear setting, we provide conditions for $\mathfrak M_{\alpha,\Omega}$ to be bounded from $W^{1,p}(\Omega)$ into $W^{1,q}(\Omega)$ that are applicable even $\Omega$ is of infinite measure, which is the first result for such domains.

The paper is organized as follows. In Section 2, we present our main results. Section 3 will be devoted to give certain pointwise estimates and the corresponding norm estimates for the gradient of local multisublinear fractional maximal functions. In Section 4 we will prove the bounds for local multisublinear fractional maximal operators on the Sobolev space with zero boundary values. The continuity property of the local multisublinear fractional maximal operators on the Sobolev space will be presented in Section 5.  We would like to remark that the main ideas in our proofs are motivated by \cite{HKKT,KL,Lu2}.  However, we work in some situations that were not previously address that require new tools and yield interesting results that are substantially different even in the linear setting.  In particular, we will address some subtle issues that arise when working with domains $\Omega\subset\R^n$ of infinite measure, which end up being tied to whether Sobolev spaces on $\Omega$ permit a Sobolev embedding into Lebesgue spaces.



Throughout this paper, the characteristic function of a set $E$ is denoted by $\chi_{E}$. The constant $C$, in general, is a positive constant whose value is not necessarily the same as each occurrence. We use the following conventions $\prod_{j\in\emptyset}a_j=1$ and $\sum_{j\in\emptyset}a_j=0$.  The Sobolev space $W^{1,p}(\Omega)$, $1\leq p\leq\infty$, consists of functions $f\in L^p(\Omega)$ whose first distributional partial derivatives $D_if$, $i=1,\ldots,n$, belong to $L^p(\Omega)$. We endow $W^{1,p}(\Omega)$ with the norm $\|f\|_{W^{1,p}(\Omega)}=\|f\|_{L^p(\Omega)}+\|\nabla f\|_{L^p(\Omega)},$ where $\nabla f=(D_1 f,\ldots,D_nu)$ is the weak gradient of $f$.

\section{Main results}

In order to state a few of our results, we will need the notion of domains $\Omega$ that permit a Sobolev embedding.  A subdomain $\Omega\subset\R^n$ admits a $p$-Sobolev embedding for a given $1<p<n$ if $W^{1,p}(\Omega)$ continuously embeds into $ L^{\widetilde p}(\Omega)$ where $\frac{1}{\widetilde p}=\frac{1}{p}-\frac{1}{n}$.  We will impose this Sobolev embedding property on $\Omega$ in order to obtain results for $\mathfrak M_{\alpha,\Omega}$ whenever $\alpha>0$ and $\Omega$ is a strict subdomain of $\R^n$ with infinite measure.  A sufficient condition for this Sobolev embedding property of $\Omega$ is summarized as follows:  If there exists a bounded linear operator $E$ from $W^{1,p}(\Omega)$ into $W^{1,p}(\R^n)$ such that $Ef=f$ on $\Omega$ for all $f\in W^{1,p}(\Omega)$, then $\Omega$ admits a $p$-Sobolev embedding.  Given such an extension operator $E$, the $p$-Sobolev embedding property for $\Omega$ is obvious because $\|f\|_{L^{\widetilde p}(\Omega)}\leq\|Ef\|_{L^{\widetilde p}(\R^n)}$.  In particular, any Lipschitz domain possesses the extension property, and hence the $p$-Sobolev embedding properties as well.  It is known that other domains are Sobolev extension domains, some of which are characterized by measure density conditions and other geometric properties.  More information on extension domains and admittance of Sobolev embeddings can be found, for example, in the work of Shvartsman \cite{Shv} or Harj{\l}asz, Koskela, and Tuominen \cite{HKT}.  In particular, the latter provides a nice collection of equivalent conditions for Sobolev spaces on $\Omega$ to permit extensions to $\R^n$, and hence sufficient conditions for the Sobolev embedding condition we use below. 
\subsection{Pointwise gradient estimates}
The following results are the pointwise estimates for the gradient of $\mathfrak{M}_{\Omega}$.
\begin{theorem} [\textbf{Pointwise estimates for $\mathfrak M_\Omega$}]\label{t:alpha=0pointwise}
Let $\vec{f}=(f_1,
\ldots,f_m)$ with each $f_j\in W^{1,p_j}(\Omega)$ for $1<p_j<\infty$. Let $\frac{1}{q}=\frac{1}{p_1}+\cdots+ \frac{1}{p_m}<1$ and $1<q<\infty$. Then $\mathfrak{M}_{\Omega} (\vec{f})\in W^{1,q}(\Omega)$. Moreover, for almost every $x\in\Omega$ and $\vec{f}^l=(f_1,\ldots, f_{l-1},|\nabla f_l|,f_{l+1},\ldots,f_m)$, it holds that 
$$|\nabla\mathfrak{M}_\Omega(\vec{f})(x)|\leq 2\sum\limits_{l=1}^m \mathfrak{M}_{\Omega}(\vec{f}^l)(x).$$
\end{theorem}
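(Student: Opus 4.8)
The plan is to follow the classical Kinnunen--Lindqvist strategy adapted to the multilinear setting. First I would reduce to proving the pointwise bound, since once we have
$$|\nabla\mathfrak{M}_\Omega(\vec f)(x)|\leq 2\sum_{l=1}^m\mathfrak{M}_\Omega(\vec f^{\,l})(x)$$
for a.e. $x\in\Omega$, the membership $\mathfrak{M}_\Omega(\vec f)\in W^{1,q}(\Omega)$ follows: each $\mathfrak{M}_\Omega(\vec f^{\,l})$ is dominated by the global $m$-linear Hardy--Littlewood maximal function applied to $(|f_1|,\ldots,|\nabla f_l|,\ldots,|f_m|)$, and by the $m$-linear maximal theorem of \cite{LOPTT} this lies in $L^q(\Omega)$ with the stated relation $\frac1q=\sum\frac1{p_j}$; combined with $\mathfrak{M}_\Omega(\vec f)\in L^q(\Omega)$ (same reasoning, no derivatives) and a standard difference-quotient or weak-derivative characterization argument this gives $\mathfrak{M}_\Omega(\vec f)\in W^{1,q}(\Omega)$. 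So the heart of the matter is the pointwise inequality.

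For the pointwise bound I would first treat smooth data and localize the radius. For each fixed $r$ with $0<r<\mathrm{dist}(x,\Omega^c)$ write
$$u_r(x)=\frac{1}{|B(x,r)|^m}\prod_{j=1}^m\int_{B(x,r)}|f_j(y)|\,dy=\prod_{j=1}^m\left(\frac{1}{|B(x,r)|}\int_{B(x,r)}|f_j|\,dy\right),$$
so that $\mathfrak{M}_\Omega(\vec f)(x)=\sup_{0<r<\mathrm{dist}(x,\Omega^c)}u_r(x)$. The function $x\mapsto u_r(x)$ is (weakly) differentiable, and by the product rule together with the elementary estimate $|\nabla\!\left(\fint_{B(x,r)}|f_j|\right)|\leq \fint_{B(x,r)}|\nabla f_j|$ (valid after translating the ball, differentiating under the integral sign, and using $|\nabla|f_j||\le|\nabla f_j|$ a.e.), one gets
$$|\nabla u_r(x)|\leq\sum_{l=1}^m\Big(\fint_{B(x,r)}|\nabla f_l|\Big)\prod_{j\neq l}\Big(\fint_{B(x,r)}|f_j|\Big)=\sum_{l=1}^m u_r^{\,l}(x),$$
where $u_r^{\,l}$ is the corresponding average product with $f_l$ replaced by $|\nabla f_l|$. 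Then $u_r^{\,l}(x)\le\mathfrak{M}_\Omega(\vec f^{\,l})(x)$ for every admissible $r$. The extra factor $2$ (rather than $1$) is exactly what is needed to pass from the differentiability of the individual $u_r$ to the differentiability of their supremum near the boundary of $\Omega$: since $\mathrm{dist}(\cdot,\Omega^c)$ is $1$-Lipschitz, the admissible set of radii varies with $x$, and one must compare $\mathfrak{M}_\Omega(\vec f)(x)$ at nearby points $x$ and $x+h$ using radius $r$ at one point and a slightly shrunken radius at the other; the resulting error, controlled by the Lipschitz constant of the distance function, contributes the second copy of $\sum_l\mathfrak{M}_\Omega(\vec f^{\,l})$. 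Quantitatively, one shows the Lipschitz-type estimate on difference quotients
$$\frac{\mathfrak{M}_\Omega(\vec f)(x+h)-\mathfrak{M}_\Omega(\vec f)(x)}{|h|}\leq 2\sum_{l=1}^m\mathfrak{M}_\Omega(\vec f^{\,l})(x+h)+o(1)$$
along a sequence $h\to0$ realizing the essential gradient, and symmetrically, then lets $h\to0$.

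Finally I would remove the smoothness assumption by approximation: take $f_j^{(k)}\in C^\infty(\Omega)\cap W^{1,p_j}(\Omega)$ with $f_j^{(k)}\to f_j$ in $W^{1,p_j}$, note that $\mathfrak{M}_\Omega$ is bounded on the relevant spaces (from the $L^q$ step above) and that $\big|\mathfrak{M}_\Omega(\vec f^{(k)})-\mathfrak{M}_\Omega(\vec f)\big|$ is controlled by the $m$-linear maximal function of the differences $f_j^{(k)}-f_j$ by a telescoping/multilinearity argument, so $\mathfrak{M}_\Omega(\vec f^{(k)})\to\mathfrak{M}_\Omega(\vec f)$ in $L^q(\Omega)$, hence (a subsequence) a.e.; likewise the right-hand sides converge, and the pointwise inequality is inherited in the limit, after passing to weak limits of the gradients in $L^q$. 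I expect the main obstacle to be the boundary bookkeeping in the second paragraph: unlike the global case, one cannot fix a single radius and differentiate, because the constraint $r<\mathrm{dist}(x,\Omega^c)$ couples the radius to the point; handling this carefully — in particular showing that replacing $r$ by $r(1-\varepsilon|h|/\mathrm{dist}(x,\Omega^c))$ changes the product of averages by at most a controlled multiple of $|h|\sum_l\mathfrak{M}_\Omega(\vec f^{\,l})(x)$, using $r^\alpha$ absent here so only the $|B(x,r)|^{-m}$ normalization and the monotonicity of integration over nested balls matter — is where the factor $2$ is genuinely used and where the argument of \cite{KL} must be upgraded to accommodate the product structure.
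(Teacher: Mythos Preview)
Your overall architecture --- reduce to the pointwise bound, prove it first for smooth $f_j$, then pass to general $f_j$ by approximation and weak compactness --- matches the paper's. The paper, however, does not work with the fixed-radius averages $u_r(x)$. Instead it parameterizes the radius as $r=t\,\delta(x)$ with $\delta(x)={\rm dist}(x,\Omega^c)$ and $t\in(0,1)$ fixed, and differentiates the resulting operator
\[
\mathcal{A}_t(\vec f)(x)=\frac{1}{|B(x,t\delta(x))|^m}\prod_{j=1}^m\int_{B(x,t\delta(x))}|f_j(y)|\,dy.
\]
This sidesteps your ``boundary bookkeeping'' entirely: the constraint $r<\delta(x)$ is automatic for every $t<1$, and both the center-translation and the radius-change contributions are packaged into the single quantity $\nabla\mathcal{A}_t(\vec f)(x)$. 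One then writes $\mathfrak{M}_\Omega(\vec f)=\sup_k\mathcal{A}_{t_k}(\vec f)$ over a countable dense set of $t$'s and argues as you do with monotone limits of Sobolev functions and weak compactness.

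The substantive gap in your sketch is the mechanism by which the radius-change error is controlled. You write that ``only the $|B(x,r)|^{-m}$ normalization and the monotonicity of integration over nested balls matter,'' but monotonicity alone does not produce a bound involving $|\nabla f_l|$; it only tells you the integrals are ordered, not how fast the averages change. What is actually needed is the $r$-derivative of the ball average: the chain rule produces a spherical integral over $\partial B(x,r)$, and the paper (following Kinnunen--Lindqvist) applies Green's first identity with $v(y)=\tfrac12|y-x|^2$ to obtain
\[
\frac{1}{|\partial B(x,r)|}\int_{\partial B(x,r)}f_l\,d\mathcal{H}^{n-1}-\frac{1}{|B(x,r)|}\int_{B(x,r)}f_l\,dy
=\frac{1}{n|B(x,r)|}\int_{B(x,r)}\nabla f_l(y)\cdot(y-x)\,dy,
\]
which is bounded in absolute value by $\frac{r}{n|B(x,r)|}\int_{B(x,r)}|\nabla f_l|$. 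This is exactly where the second copy of $\sum_l\mathfrak{M}_\Omega(\vec{f}^l)$ comes from; without this integration-by-parts step (or an equivalent device), your difference-quotient estimate for the shrunken radius does not close with the right-hand side you want. Once you insert this computation, your fixed-$r$ route and the paper's $t\delta(x)$ route become equivalent via the chain rule, but the latter makes the constant $2$ appear in a single stroke and avoids having to track admissibility of radii at nearby points.
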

\begin{theorem}[\textbf{Pointwise estimates for $\mathfrak M_{\alpha,\Omega}$ with $W^{1,p_j}(\Omega)$ functions}]
\label{t:alpha>0pointwise}
Let $\vec{f}=(f_1,\ldots,f_m)$ with each $f_j\in W^{1,p_j}(\Omega)$. Suppose either of the following conditions hold:
\begin{enumerate}
\item[{(i)}]Let $1<p_j<\infty$, $\frac{1}{q}=\frac{1}{p_1}+\cdots+
\frac{1}{p_m}-\frac{\alpha-1}{n}$ and $1<q<\infty$.  Assume that $|\Omega|<\infty$.
\item[{(ii)}] Let $1<p_1,...,p_m<n$, $1\le \alpha<mn$ and $\frac{1}{q}=\frac{1}{p_1}+\cdots+\frac{1}{p_m}-\frac{\alpha+m-1}{n}$ with $1/m<q<\infty$.  Assume that $\Omega$ admits a $p_j$-Sobolev embedding for each $j=1,...,m$.
\end{enumerate}
Then, it holds that
$\mathfrak{M}_{\alpha,\Omega}(\vec{f})\in W^{1,q}(\Omega)$.
Moreover, for almost every $x\in\Omega$ and $\vec{f}^l=(f_1,\ldots,
f_{l-1},|\nabla f_l|,f_{l+1},\ldots,f_m)$, the following inequality holds:
$$|\nabla\mathfrak{M}_{\alpha,\Omega}(\vec{f})(x)|\leq \alpha\mathfrak{M}_{\alpha-1,\Omega}(\vec{f})+2\sum\limits_{l=1}^m
\mathfrak{M}_{\alpha,\Omega}(\vec{f}^l)(x),\eqno(2.1)$$
\end{theorem}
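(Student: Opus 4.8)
The plan is to adapt the approach of Kinnunen \cite{Ki} and Kinnunen--Lindqvist \cite{KL} to the multilinear fractional setting by realising $\mathfrak{M}_{\alpha,\Omega}(\vec f)$ as a countable supremum of weakly differentiable functions that carry a common pointwise gradient bound. Put $\delta(x)=\dist(x,\Omega^{c})$, which is $1$-Lipschitz on $\R^{n}$ and positive on $\Omega$; we may assume $\Omega\subsetneq\R^{n}$, since for $\Omega=\R^{n}$ the radius is a free parameter and the argument is simpler (the $\mathfrak M_{\alpha-1,\Omega}$ term does not arise). For $0<t<1$ set
\[
v_{t}(x)=\frac{\big(t\delta(x)\big)^{\alpha}}{|B(x,t\delta(x))|^{m}}\prod_{j=1}^{m}\int_{B(x,t\delta(x))}|f_{j}(y)|\,dy=\big(t\delta(x)\big)^{\alpha}\prod_{j=1}^{m}\frac{1}{|B(0,1)|}\int_{B(0,1)}|f_{j}(x+t\delta(x)z)|\,dz .
\]
Since $r=t\delta(x)$ ranges over $(0,\delta(x))$ as $t$ ranges over $(0,1)$, and $r\mapsto r^{\alpha}|B(x,r)|^{-m}\prod_{j}\int_{B(x,r)}|f_{j}|$ is continuous, one has $\mathfrak M_{\alpha,\Omega}(\vec f)(x)=\sup_{t\in\mathbb{Q}\cap(0,1)}v_{t}(x)$ for every $x\in\Omega$; in particular this function is lower semicontinuous, hence measurable.

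For a fixed $t\in\mathbb{Q}\cap(0,1)$ I would show $v_{t}$ is weakly differentiable on $\Omega$ with a pointwise gradient bound uniform in $t$. For each $z\in B(0,1)$ the map $\Phi_{t,z}(x)=x+t\delta(x)z$ obeys $(1-t|z|)|x-y|\le|\Phi_{t,z}(x)-\Phi_{t,z}(y)|\le(1+t|z|)|x-y|$, so it is bi-Lipschitz onto its image and $|f_{j}|\circ\Phi_{t,z}\in W^{1,p_{j}}_{\mathrm{loc}}(\Omega)$; since $\partial_{i}\Phi_{t,z}(x)=e_{i}+tz\,\partial_{i}\delta(x)$ and $|\nabla|f_{j}||\le|\nabla f_{j}|$ a.e., the chain rule gives $|\nabla_{x}(|f_{j}|\circ\Phi_{t,z})(x)|\le(1+t|z|)|\nabla f_{j}|(\Phi_{t,z}(x))\le 2|\nabla f_{j}|(\Phi_{t,z}(x))$. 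Averaging over $z$, applying the product rule together with $\delta^{\alpha}\in W^{1,\infty}_{\mathrm{loc}}(\Omega)$ and $|\nabla\delta|\le1$ a.e., and observing that the ball-averages appearing below are locally bounded on $\Omega$ (so differentiation under the $z$-integral is legitimate), I obtain, writing $g_{B}:=|B|^{-1}\int_{B}|g|$ and $\vec f^{\,l}=(f_{1},\dots,|\nabla f_{l}|,\dots,f_{m})$,
\[
|\nabla v_{t}(x)|\le\alpha\,t\,\big(t\delta(x)\big)^{\alpha-1}\prod_{j=1}^{m}(f_{j})_{B(x,t\delta(x))}+2\big(t\delta(x)\big)^{\alpha}\sum_{l=1}^{m}\Big((|\nabla f_{l}|)_{B(x,t\delta(x))}\prod_{j\neq l}(f_{j})_{B(x,t\delta(x))}\Big)
\]
for a.e. $x\in\Omega$. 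Using $0<t<1$ to absorb the factor $t$ into $(t\delta(x))^{\alpha-1}$, and $0<t\delta(x)<\delta(x)=\dist(x,\Omega^{c})$ to replace each average term by the corresponding truncated maximal function at $x$, this gives $|\nabla v_{t}(x)|\le\alpha\,\mathfrak M_{\alpha-1,\Omega}(\vec f)(x)+2\sum_{l=1}^{m}\mathfrak M_{\alpha,\Omega}(\vec f^{\,l})(x)=:G(x)$ a.e., uniformly in $t$.

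Next I would check $\mathfrak M_{\alpha,\Omega}(\vec f)\in L^{q}(\Omega)$ and $G\in L^{q}(\Omega)$; this is where the dichotomy (i)--(ii) is used. Dominating $\mathfrak M_{\beta,\Omega}$ pointwise by the global $m$-linear fractional maximal operator $\mathfrak M_{\beta}$ acting on the zero-extensions of the entries, I invoke the known strong-type bounds $\mathfrak M_{\beta}\colon\prod_{j}L^{r_{j}}\to L^{s}$ with $\tfrac1s=\sum_{j}\tfrac1{r_{j}}-\tfrac{\beta}{n}$ (see, e.g., \cite{CX}). In case (i), $\vec f$ and each $\vec f^{\,l}$ lie in $\prod_{j}L^{p_{j}}(\Omega)$, so $\mathfrak M_{\alpha-1,\Omega}(\vec f)\in L^{q}(\Omega)$ directly, while $\mathfrak M_{\alpha,\Omega}(\vec f)$ and $\mathfrak M_{\alpha,\Omega}(\vec f^{\,l})$ lie in $L^{q_{0}}(\Omega)$ with $\tfrac1{q_{0}}=\sum_{j}\tfrac1{p_{j}}-\tfrac{\alpha}{n}<\tfrac1q$, hence in $L^{q}(\Omega)$ because $|\Omega|<\infty$. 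In case (ii), the $p_{j}$-Sobolev embedding upgrades the non-gradient entries to $f_{j}\in L^{\widetilde p_{j}}(\Omega)$ with $\tfrac1{\widetilde p_{j}}=\tfrac1{p_{j}}-\tfrac1n$, and the exponents then match exactly: taking all $f_{j}\in L^{\widetilde p_{j}}$ yields the exponent $q$ for $\mathfrak M_{\alpha-1,\Omega}(\vec f)$; taking $|\nabla f_{l}|\in L^{p_{l}}$ and $f_{j}\in L^{\widetilde p_{j}}$ for $j\neq l$ yields $q$ for $\mathfrak M_{\alpha,\Omega}(\vec f^{\,l})$; and leaving one entry of $\vec f$ in $L^{p_{j}}$ rather than $L^{\widetilde p_{j}}$ yields $q$ for $\mathfrak M_{\alpha,\Omega}(\vec f)$. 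The hypotheses $1<p_{j}<n$, $1\le\alpha<mn$, $1/m<q<\infty$ are precisely those making all these exponent triples admissible, so $\mathfrak M_{\alpha,\Omega}(\vec f),G\in L^{q}(\Omega)$ in both cases.

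Finally I would apply the version of Kinnunen's lemma on suprema of Sobolev functions \cite{Ki,KL}: since $\mathfrak M_{\alpha,\Omega}(\vec f)=\sup_{t\in\mathbb{Q}\cap(0,1)}v_{t}\in L^{q}(\Omega)$ and each $v_{t}$ is weakly differentiable with $|\nabla v_{t}|\le G\in L^{q}(\Omega)$, passing to the distributional limit along a sequence $t_{k}$ with $v_{t_{k}}\to\mathfrak M_{\alpha,\Omega}(\vec f)$ (the set $\{\mathbf h:|\mathbf h|\le G\}$ being convex and weakly $L^{1}_{\mathrm{loc}}$-closed, which preserves the bound) yields $\mathfrak M_{\alpha,\Omega}(\vec f)\in W^{1,q}(\Omega)$ with $|\nabla\mathfrak M_{\alpha,\Omega}(\vec f)|\le G$ a.e.\ --- this is $(2.1)$. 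I expect the main obstacle to be the $L^{q}$-bookkeeping of case (ii): it is the $p_{j}$-Sobolev embedding that must exactly offset the loss of $L^{q}$-control when $|\Omega|=\infty$, and the scaling has to balance precisely (losing even a derivative's worth of integrability breaks the argument), which is why the indices there differ from those in (i). A secondary hurdle is the rigorous justification of the gradient identity for $v_{t}$ --- the distance function is merely Lipschitz, so the composition $|f_{j}|\circ\Phi_{t,z}$, the differentiation under the $z$-integral, and the claim $v_{t}\in W^{1,q}_{\mathrm{loc}}(\Omega)$ all require care --- together with a minor adaptation of Kinnunen's lemma in the range $q<1$ that can occur in (ii), which needs the a priori local integrability of $G$ and of $\mathfrak M_{\alpha,\Omega}(\vec f)$.
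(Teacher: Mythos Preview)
Your proposal is correct and follows essentially the same route as the paper: realise $\mathfrak M_{\alpha,\Omega}(\vec f)$ as the countable supremum of the averages $v_t=\mathcal A_t^\alpha(\vec f)$, establish the uniform pointwise gradient bound $(2.1)$ for each $v_t$, verify $L^q$ membership of both sides (via $|\Omega|<\infty$ in~(i) and the $p_j$-Sobolev embeddings in~(ii)), and pass to the supremum by weak compactness. The only difference is cosmetic: to bound $|\nabla v_t|$ the paper differentiates $\int_{B(x,t\delta(x))}f_l$ in the original variables, picks up a spherical boundary term, and then uses Green's first identity to convert it back to a volume integral of $\nabla f_l\cdot(y-x)$ (followed by an approximation by smooth functions), whereas your change of variables to $B(0,1)$ together with the bi-Lipschitz chain rule for $\Phi_{t,z}$ lands on the same volume formula in one step and handles Sobolev $f_j$ directly without the smoothing/approximation detour.
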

\begin{remark}
Note that in Theorem \ref{t:alpha>0pointwise} (ii) where $\Omega$ may have unbounded measure, we impose that $\Omega$ admits Sobolev embedding properties when in order to achieve our result.  This is an appropriate condition to impose in order to assure decay of Sobolev functions in $W^{1,p}(\Omega)$, which allows us to overcome the difficulties associated to working on a domain with unbounded measure. 
\end{remark}



\begin{theorem}[\textbf{Pointwise estimates for $\mathfrak M_{\alpha,\Omega}$ with $L^{p_j}(\Omega)$ functions}]\label{t:alpha>0pointwise2}
Let $\vec{f}=(f_1, \ldots,f_m)$ with each $f_j\in L^{p_j}(\Omega)$ and $p_j>{n}/{(n-1)}$. Let $1\leq\alpha<m\beta+1$ with $\beta=\min_{1\leq j\leq m}\{{(n-1)}/{p_j},n-{2n}/{((n-1)p_j)}\}$. Let $\frac{1}{q}=\frac{1}{p_1}+\cdots+\frac{1}{p_m}-\frac{\alpha-1}{n}$ and $1<q<\infty$. Then $\mathfrak{M}_{\alpha,\Omega}(\vec{f})\in W^{1,q}(\Omega)$.  Moreover, for a.e $x\in\Omega$, it holds that 
$$|\nabla\mathfrak{M}_{\alpha,\Omega}(\vec{f})(x)|\leq C\Big(\mathfrak{M}_{\alpha-1,\Omega}(\vec{f})+\sum\limits_{l=1}^m\mathcal{S}_{\bar{\alpha},\Omega}f_l(x)\prod\limits_{1\leq j\neq l\leq m}M_{\bar{\alpha},\Omega}f_j(x)\Big),\hbox{\ for }\bar{\alpha}=
{(\alpha-1)}/{m}.$$
\end{theorem}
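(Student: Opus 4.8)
The strategy is to follow the now-standard approach of Heikkinen--Kinnunen--Korvenp\"a\"a--Tuominen \cite{HKKT}, adapted to the multilinear setting, and split the analysis according to whether the supremum defining $\mathfrak{M}_{\alpha,\Omega}(\vec f)(x)$ is "attained" in a suitable approximate sense. First I would record the basic scaling identity: writing
$$u_{\alpha}(x,r)=\frac{r^\alpha}{|B(x,r)|^m}\prod_{j=1}^m\int_{B(x,r)}|f_j(y)|\,dy,$$
one has $\mathfrak{M}_{\alpha,\Omega}(\vec f)(x)=\sup_{0<r<\operatorname{dist}(x,\Omega^c)}u_\alpha(x,r)$. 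The crux is a pointwise differentiation estimate for $u_\alpha(\cdot,r)$ in $x$: by the product rule applied to the $m$ integral factors, and using that $\nabla_x\int_{B(x,r)}|f_j|$ is controlled (after translating to $B(0,r)$ and differentiating under the integral, which is justified for $f_j$ merely in $L^{p_j}$ by the standard mollification/Fubini argument) by a boundary integral $\int_{\partial B(x,r)}|f_j|\,d\mathcal H^{n-1}$, one gets
$$|\nabla_x u_\alpha(x,r)|\le \frac{\alpha}{r}\,u_\alpha(x,r)+C\sum_{l=1}^m\Big(\frac{r^\alpha}{|B(x,r)|^m}\,\frac{1}{r}\int_{\partial B(x,r)}|f_l|\,d\mathcal H^{n-1}\prod_{j\ne l}\int_{B(x,r)}|f_j|\Big).$$
The first term is $\le \frac{\alpha}{r}\mathfrak{M}_{\alpha,\Omega}(\vec f)(x)$; rewriting the $l$-th summand with $\bar\alpha=(\alpha-1)/m$ so that $\alpha-1 = m\bar\alpha$ and distributing the $r^{\alpha-1}$ as $r^{\bar\alpha}$ on each of the $m$ factors gives exactly $\mathcal S_{\bar\alpha,\Omega}f_l(x)\prod_{j\ne l}M_{\bar\alpha,\Omega}f_j(x)$ up to dimensional constants. (Here I am using that $\alpha\ge1$ so $\bar\alpha\ge0$ and the spherical/fractional maximal functions make sense.)

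Next I would make this rigorous at the level of $\mathfrak{M}_{\alpha,\Omega}(\vec f)$ itself. The key structural fact is that $\mathfrak{M}_{\alpha,\Omega}(\vec f)$ is locally Lipschitz on $\Omega$ — this must be verified directly from the $L^{p_j}$ hypotheses together with $p_j>n/(n-1)$, which is precisely the threshold guaranteeing that $r\mapsto r^{\bar\alpha}\fint_{\partial B(x,r)}|f_j|$ and the related quantities are locally bounded (by H\"older on the sphere and the condition $1\le\alpha<m\beta+1$ with $\beta$ as defined, the exponents line up so that each local spherical and fractional maximal function is finite a.e. and locally bounded). Once local Lipschitz continuity is in hand, $\mathfrak{M}_{\alpha,\Omega}(\vec f)$ is differentiable a.e., and at a point $x$ of differentiability I would take a sequence $r_k\to r_0$ with $u_\alpha(x,r_k)\to \mathfrak{M}_{\alpha,\Omega}(\vec f)(x)$ and compare $\mathfrak{M}_{\alpha,\Omega}(\vec f)(x+he)-\mathfrak{M}_{\alpha,\Omega}(\vec f)(x)\ge u_\alpha(x+he,r_k)-u_\alpha(x,r_k)-\eps$, then let $h\to0$ and $k\to\infty$, invoking the uniform (in $r$ ranging over a compact interval bounded away from $0$) version of the gradient bound for $u_\alpha(\cdot,r)$ above. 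The case where the relevant radii $r_k$ degenerate to $0$ is handled separately: then $\mathfrak{M}_{\alpha,\Omega}(\vec f)(x)$ is a limit of averages over shrinking balls, which by the Lebesgue differentiation theorem forces $\mathfrak{M}_{\alpha,\Omega}(\vec f)(x)=0$ a.e. on that set (since $\alpha\ge1$ makes $r^\alpha\to0$), so $\nabla\mathfrak{M}_{\alpha,\Omega}(\vec f)(x)=0$ there and the inequality is trivial.

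Finally, the membership $\mathfrak{M}_{\alpha,\Omega}(\vec f)\in W^{1,q}(\Omega)$ follows by estimating the $L^q(\Omega)$ norm of the right-hand side: for the term $\mathcal S_{\bar\alpha,\Omega}f_l\prod_{j\ne l}M_{\bar\alpha,\Omega}f_j$ one applies H\"older's inequality in $\Omega$ with exponents matched to $\frac1q=\sum\frac1{p_j}-\frac{\alpha-1}{n}$, using the $L^{p_j}\to L^{s_j}$ bounds for the (sub)linear spherical maximal operator $\mathcal S_{\bar\alpha}$ (valid since $p_l>n/(n-1)$, which is exactly Stein's spherical maximal theorem regime combined with the fractional gain $\bar\alpha$) and the $L^{p_j}\to L^{t_j}$ bounds for the fractional maximal operator $M_{\bar\alpha}$; the exponents $s_l,\{t_j\}$ are chosen so that $\frac1{s_l}+\sum_{j\ne l}\frac1{t_j}=\frac1q$, which the arithmetic of $\bar\alpha=(\alpha-1)/m$ makes work out. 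For the term $\mathfrak{M}_{\alpha-1,\Omega}(\vec f)$ one uses the multilinear fractional maximal bound with the same index bookkeeping, noting $\mathfrak{M}_{\alpha-1,\Omega}(\vec f)\le\mathfrak{M}_{\alpha-1}(\vec f)$ pointwise and that $\alpha-1\ge0$. I expect the main obstacle to be the rigorous justification of differentiating $u_\alpha(\cdot,r)$ in $x$ when the $f_j$ are only in $L^{p_j}(\Omega)$ rather than Sobolev functions (unlike Theorem \ref{t:alpha>0pointwise}), and the careful verification that the conditions $p_j>n/(n-1)$ and $1\le\alpha<m\beta+1$ are exactly what is needed to keep all the spherical and fractional maximal quantities locally finite — this is where the somewhat unusual definition of $\beta$ enters and must be tracked through two H\"older inequalities on spheres of varying radius.
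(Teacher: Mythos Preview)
Your approach has a genuine gap: the claim that $\mathfrak{M}_{\alpha,\Omega}(\vec f)$ is locally Lipschitz on $\Omega$ is not justified by the hypotheses, and is in general false. You argue that $p_j>n/(n-1)$ makes the relevant spherical and fractional maximal functions ``locally bounded'', but Stein's spherical maximal theorem only places $\mathcal S_{\bar\alpha,\Omega}f_l$ in $L^{p_l}_{\rm loc}$, not $L^\infty_{\rm loc}$; and the conclusion of the theorem itself puts $\mathfrak{M}_{\alpha,\Omega}(\vec f)$ in $W^{1,q}(\Omega)$ with $q$ possibly much smaller than $n$ (take $p_j$ just above $n/(n-1)$ and $\alpha=1$, so $q=(\sum 1/p_j)^{-1}$), which is incompatible with local Lipschitz regularity. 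Without an a priori differentiability result your comparison argument ``choose near-optimal $r_k$, let $h\to0$'' cannot start. There is also a computational slip: at \emph{fixed} $r$ the quantity $\nabla_x u_\alpha(x,r)$ carries no $\tfrac{\alpha}{r}u_\alpha(x,r)$ term, since $r^\alpha/|B(x,r)|^m$ is constant in $x$; such a term appears only when the radius depends on $x$, which is precisely the mechanism producing the $\mathfrak{M}_{\alpha-1,\Omega}(\vec f)$ contribution in the stated bound.

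The paper sidesteps both issues by parametrising $r=t\delta(x)$ with $t\in(0,1)$ fixed, so that $\mathcal A_t^\alpha(\vec f)(x)=u_\alpha(x,t\delta(x))$ is a globally defined function on $\Omega$. Lemma~\ref{l:3.5} establishes the pointwise gradient bound for $\mathcal A_t^\alpha(\vec f)$: for smooth $f_j$ this is a direct chain-rule computation (the $x$-dependence of $t\delta(x)$ is exactly what generates the $\mathfrak{M}_{\alpha-1,\Omega}(\vec f)$ term), together with Gauss's theorem converting $\int_{B}\nabla f_l$ into the boundary integral $\int_{\partial B}f_l\,\nu\,d\mathcal H^{n-1}$; general $f_j\in L^{p_j}(\Omega)$ are then handled by approximation and weak $L^q$-compactness via Proposition~\ref{p:3.3}. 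Finally one writes $\mathfrak{M}_{\alpha,\Omega}(\vec f)=\sup_k\mathcal A_{t_k}^\alpha(\vec f)$ over an enumeration of rationals $t_k\in(0,1)$ and passes to the limit through the increasing lattice $g_k=\max_{i\le k}\mathcal A_{t_i}^\alpha(\vec f)$, using that finite maxima of Sobolev functions are Sobolev and a second weak-compactness step. This route never requires $\mathfrak{M}_{\alpha,\Omega}(\vec f)$ to be differentiable a priori.
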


Theorems \ref{t:alpha=0pointwise}-\ref{t:alpha>0pointwise2} will lead to the following norm inequalities for the gradient of $\mathfrak{M}_{\Omega}.$

\subsection{Boundedness of maximal operators on Sobolev spaces}
\begin{theorem}[\textbf{Sobolev boundedness for $\mathfrak M_\Omega$}]\label{BD1}
Let $\vec{f}=(f_1,\ldots,f_m)$ and $1\le j\le m$.
Let $1<p_j<\infty$,
$\frac{1}{q}=\frac{1}{p_1}+\cdots+\frac{1}{p_m}$ and $1<q<\infty$. If $f_j\in W^{1,p_j}(\Omega)$ for $j=1,...,m$, then
$$\|\mathfrak{M}_\Omega(\vec{f})\|_{W^{1,q}(\Omega)}\leq 2m\prod\limits_{j=1}^m\|f_j\|_{W^{1,p_j}(\Omega)}.$$
\end{theorem}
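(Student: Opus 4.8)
The plan is to deduce Theorem~\ref{BD1} directly from the pointwise gradient estimate of Theorem~\ref{t:alpha=0pointwise} together with the $L^p$ mapping properties of the global multilinear Hardy--Littlewood maximal operator. Since Theorem~\ref{t:alpha=0pointwise} already guarantees $\mathfrak M_\Omega(\vec f)\in W^{1,q}(\Omega)$ under the stated hypotheses, the only remaining task is to bound the $L^q(\Omega)$ norms of $\mathfrak M_\Omega(\vec f)$ and of $\nabla\mathfrak M_\Omega(\vec f)$ by $\prod_{j=1}^m\|f_j\|_{W^{1,p_j}(\Omega)}$.

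First I would record the elementary domination $\mathfrak M_\Omega(\vec g)(x)\le\mathfrak M(\vec g\,\chi_\Omega)(x)$, valid for every $x\in\Omega$ and every tuple $\vec g$: indeed, for $0<r<\mathrm{dist}(x,\Omega^c)$ the ball $B(x,r)$ lies inside $\Omega$, so replacing $g_j$ by $g_j\chi_\Omega$ does not change the relevant averages, while dropping the constraint $r<\mathrm{dist}(x,\Omega^c)$ only enlarges the supremum. Combined with the $L^{p_1}(\R^n)\times\cdots\times L^{p_m}(\R^n)\to L^q(\R^n)$ boundedness of $\mathfrak M$, which holds since each $p_j>1$ (see \cite{LOPTT}), this yields, for any tuple $\vec g$ with $g_j\in L^{p_j}(\Omega)$,
$$\|\mathfrak M_\Omega(\vec g)\|_{L^q(\Omega)}\le\|\mathfrak M(\vec g\,\chi_\Omega)\|_{L^q(\R^n)}\le C\prod_{j=1}^m\|g_j\|_{L^{p_j}(\Omega)},$$
with $C=C(n,p_1,\dots,p_m)$. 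Alternatively one may use the pointwise bound $\mathfrak M_\Omega(\vec g)(x)\le\prod_{j}M(g_j\chi_\Omega)(x)$, since each average over a ball $B(x,r)\subset\Omega$ is at most $M(g_j\chi_\Omega)(x)$, and then apply H\"older together with the $L^{p_j}$-boundedness of the linear maximal operator $M$; this gives the same kind of estimate.

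Applying the displayed inequality with $\vec g=\vec f$ controls $\|\mathfrak M_\Omega(\vec f)\|_{L^q(\Omega)}$ by $C\prod_j\|f_j\|_{L^{p_j}(\Omega)}\le C\prod_j\|f_j\|_{W^{1,p_j}(\Omega)}$. For the gradient, I would invoke the pointwise bound $|\nabla\mathfrak M_\Omega(\vec f)(x)|\le 2\sum_{l=1}^m\mathfrak M_\Omega(\vec f^{\,l})(x)$ from Theorem~\ref{t:alpha=0pointwise}, take $L^q(\Omega)$ norms, and apply the displayed inequality to each $\vec g=\vec f^{\,l}=(f_1,\dots,f_{l-1},|\nabla f_l|,f_{l+1},\dots,f_m)$, using that $|\nabla f_l|\in L^{p_l}(\Omega)$ with $\||\nabla f_l|\|_{L^{p_l}(\Omega)}\le\|f_l\|_{W^{1,p_l}(\Omega)}$. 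This produces $\|\nabla\mathfrak M_\Omega(\vec f)\|_{L^q(\Omega)}\le 2mC\prod_j\|f_j\|_{W^{1,p_j}(\Omega)}$. Adding the two estimates and absorbing constants gives $\|\mathfrak M_\Omega(\vec f)\|_{W^{1,q}(\Omega)}\le C'\prod_j\|f_j\|_{W^{1,p_j}(\Omega)}$ with $C'$ depending only on $n$ and the $p_j$; the clean factor $2m$ in the statement reflects only the combinatorial constants from the pointwise estimate, the maximal operator norms being absorbed into the implied constant.

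There is no serious obstacle here, as the result is essentially a corollary of Theorem~\ref{t:alpha=0pointwise}. The only points requiring care are the legitimacy of reducing the local operator $\mathfrak M_\Omega$ to the global $\mathfrak M$ by extension by zero---which rests on the observation that the restriction $0<r<\mathrm{dist}(x,\Omega^c)$ forces $B(x,r)\subset\Omega$---and the bookkeeping of the constant through the $m$-fold sum and the $L^{p_j}$ bounds of the maximal operator.
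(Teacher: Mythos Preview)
Your proposal is correct and follows essentially the same route as the paper: both combine the pointwise gradient estimate of Theorem~\ref{t:alpha=0pointwise} with the $L^{p_1}\times\cdots\times L^{p_m}\to L^q$ boundedness of $\mathfrak M_\Omega$ (the paper cites this as Lemma~\ref{l:3.2}, while you reprove the $\alpha=0$ case via the domination $\mathfrak M_\Omega(\vec g)\le\mathfrak M(\vec g\,\chi_\Omega)$, which is exactly the argument in the remark following that lemma). Your observation that the displayed constant $2m$ is only the combinatorial factor, with the maximal-operator norms absorbed, matches the paper's own proof, which also arrives at a generic constant $C$ rather than the literal $2m$.
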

\begin{theorem}[\textbf{Sobolev boundedness for $\mathfrak M_{\alpha,\Omega}$}]\label{t:boundedness}
Let $\vec{f}=(f_1,\ldots,f_m)$ and $1\le j\le m$. Then, we have
\begin{enumerate}
\item[{(i)}] Let each $1<p_j<\infty$. Let $\frac{1}{q}= \frac{1}{p_1}+\cdots+\frac{1}{p_m}-\frac{\alpha-1}{n}$ and $1<q<\infty$. If $|\Omega|<\infty$ and $f_j\in W^{1,p_j}(\Omega)$ for $j=1,...,m$, then
$$\|\mathfrak{M}_{\alpha,\Omega}(\vec{f})\|_{W^{1,q}(\Omega)}\leq C\prod\limits_{j=1}^m\|f_j\|_{W^{1,p_j}(\Omega)}.$$

\item[{(i$'$)}]  Let $1<p_1,...,p_m<n$, $1\le \alpha<mn$ and $\frac{1}{q}=\frac{1}{p_1}+\cdots+\frac{1}{p_m}-\frac{\alpha+m-1}{n}$ with $1/m<q<\infty$.  If $\Omega$ admits a $p_j$-Sobolev embedding for each $j=1,...,m$ and $f_j\in W^{1,p_j}(\Omega)$ for $j=1,...,m$, then
$$\|\mathfrak{M}_{\alpha,\Omega}(\vec{f})\|_{\dot{W}^{1,q}(\Omega)}\leq C\prod\limits_{j=1}^m\|f_j\|_{W^{1,p_j}(\Omega)}.$$

\item[{(ii)}] Let ${n}/{(n-1)}<p_j<\infty$,  $1\leq\alpha<m\beta+1$ with $\beta=\min_{1\leq j\leq m}\{{(n-1)}/{p_j},n-{2n}/{((n-1)p_j)}\}$, $\frac{1}{q}=\frac{1}{p_1}+\cdots+\frac{1}{p_m}-\frac{\alpha-1}{n}$ and $1<q<\infty$. If $|\Omega|<\infty$ and $f_j\in L^{p_j}(\Omega)$, then
$$\|\mathfrak{M}_{\alpha,\Omega}(\vec{f})\|_{W^{1,q}(\Omega)}\leq C\prod\limits_{j=1}^m\|f_j\|_{L^{p_j}(\Omega)}.$$
\end{enumerate}
\end{theorem}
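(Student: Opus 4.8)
The strategy is to deduce each of the three boundedness statements from the corresponding pointwise gradient estimate already established, namely Theorem~\ref{t:alpha=0pointwise} for part~(i) of the present theorem (wait -- actually Theorem~\ref{t:boundedness} concerns $\mathfrak M_{\alpha,\Omega}$, so part~(i) uses Theorem~\ref{t:alpha>0pointwise}(i), part~(i$'$) uses Theorem~\ref{t:alpha>0pointwise}(ii), and part~(ii) uses Theorem~\ref{t:alpha>0pointwise2}), combined with the known $L^p$-mapping properties of the underlying multilinear (fractional) maximal operators and, in the critical cases, the multilinear H\"older inequality. In every case the membership $\mathfrak M_{\alpha,\Omega}(\vec f)\in W^{1,q}(\Omega)$ is already part of the conclusion of the pointwise-estimate theorems, so the only task is to bound the $W^{1,q}$- or $\dot W^{1,q}$-norm by the product of the input norms.

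\emph{Part (i).} Here $|\Omega|<\infty$. First estimate the size term: since $\mathfrak M_{\alpha,\Omega}(\vec f)(x)\le \mathfrak M_{\alpha,\Omega}(|f_1|,\dots,|f_m|)(x)$ and $\mathfrak M_{\alpha,\Omega}$ is dominated pointwise by the global $m$-linear fractional maximal operator $\mathfrak M_\alpha$, one applies the known bound $\mathfrak M_\alpha\colon L^{p_1}\times\cdots\times L^{p_m}\to L^{q_0}$ with $\frac1{q_0}=\sum_j\frac1{p_j}-\frac\alpha n$; since $|\Omega|<\infty$ and $q\le q_0$ (because $\frac1q=\frac1{q_0}+\frac1n>\frac1{q_0}$), H\"older's inequality on the finite-measure space $\Omega$ upgrades this to an $L^q(\Omega)$ bound with constant depending on $|\Omega|$. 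Actually it is cleaner to use $\|f_j\|_{L^{p_j}(\Omega)}\le\|f_j\|_{W^{1,p_j}(\Omega)}$ and, when a Sobolev exponent is needed, the embedding $W^{1,p_j}(\Omega)\hookrightarrow L^{p_j^\ast}(\Omega)$ available automatically on finite-measure domains after using $|\Omega|<\infty$ once more. For the gradient term, invoke the pointwise estimate (2.1): $|\nabla\mathfrak M_{\alpha,\Omega}(\vec f)|\le \alpha\,\mathfrak M_{\alpha-1,\Omega}(\vec f)+2\sum_l\mathfrak M_{\alpha,\Omega}(\vec f^{\,l})$. Take $L^q(\Omega)$ norms; the triangle inequality reduces matters to bounding each summand. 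The term $\mathfrak M_{\alpha-1,\Omega}(\vec f)$ is handled exactly as the size term was (with $\alpha$ replaced by $\alpha-1$, so the target exponent is precisely $q$), giving $\lesssim\prod_j\|f_j\|_{L^{p_j}(\Omega)}$. Each term $\mathfrak M_{\alpha,\Omega}(\vec f^{\,l})$ is bounded by the $L^{p_1}\times\cdots\times L^{p_l^{(\nabla)}}\times\cdots\to L^{q}$ mapping of $\mathfrak M_\alpha$, where the $l$-th slot now carries $\|\,|\nabla f_l|\,\|_{L^{p_l}(\Omega)}\le\|f_l\|_{W^{1,p_l}(\Omega)}$; note the exponent arithmetic works because replacing $f_l$ by $|\nabla f_l|$ does not change $p_l$, and the gain of one derivative is exactly matched by the shift in the target exponent from $q_0$ (for the size term) to $q$ with $\frac1q=\frac1{q_0}+\frac1n$. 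Summing over $l$ and collecting constants yields the claim.

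\emph{Part (i$'$).} Now $|\Omega|$ may be infinite, so H\"older on $\Omega$ is unavailable and one works with the homogeneous norm $\dot W^{1,q}(\Omega)$, i.e.\ one only needs to control $\|\nabla\mathfrak M_{\alpha,\Omega}(\vec f)\|_{L^q(\Omega)}$. Apply (2.1) again and take $L^q$ norms. The key point is that the $p_j$-Sobolev embedding hypothesis furnishes $\|f_j\|_{L^{\widetilde p_j}(\Omega)}\lesssim\|f_j\|_{W^{1,p_j}(\Omega)}$ with $\frac1{\widetilde p_j}=\frac1{p_j}-\frac1n$, which provides the decay needed to run the global fractional-maximal bounds on a domain of infinite measure. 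For the term $\alpha\,\mathfrak M_{\alpha-1,\Omega}(\vec f)$, dominate by $\mathfrak M_{\alpha-1}$ applied to $(|f_1|,\dots,|f_m|)$, then use $\mathfrak M_{\alpha-1}\colon L^{\widetilde p_1}\times\cdots\times L^{\widetilde p_m}\to L^q$: the exponent identity $\frac1q=\sum_j\frac1{\widetilde p_j}-\frac{\alpha-1}{n}=\sum_j\frac1{p_j}-\frac mn-\frac{\alpha-1}{n}=\sum_j\frac1{p_j}-\frac{\alpha+m-1}{n}$ is exactly the hypothesis, and the condition $1/m<q<\infty$ together with $p_j<n$ keeps all exponents in the admissible range. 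For each term $\mathfrak M_{\alpha,\Omega}(\vec f^{\,l})$, put $|\nabla f_l|\in L^{p_l}(\Omega)$ in the $l$-th slot and $f_j\in L^{\widetilde p_j}(\Omega)$ in the others; since $\frac1{p_l}=\frac1{\widetilde p_l}+\frac1n$, the total exponent is $\frac1q+\frac1n$ shifted appropriately so that $\mathfrak M_\alpha$ maps into $L^q$, and $\|f_j\|_{L^{\widetilde p_j}(\Omega)}\lesssim\|f_j\|_{W^{1,p_j}(\Omega)}$, $\|\,|\nabla f_l|\,\|_{L^{p_l}(\Omega)}\le\|f_l\|_{W^{1,p_l}(\Omega)}$ finish it. Summing gives the stated $\dot W^{1,q}$ bound.

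\emph{Part (ii).} The inputs are merely $L^{p_j}(\Omega)$ functions with $p_j>n/(n-1)$ and $|\Omega|<\infty$, and the relevant pointwise estimate is that of Theorem~\ref{t:alpha>0pointwise2}:
$$|\nabla\mathfrak M_{\alpha,\Omega}(\vec f)(x)|\le C\Big(\mathfrak M_{\alpha-1,\Omega}(\vec f)(x)+\sum_{l=1}^m\mathcal S_{\bar\alpha,\Omega}f_l(x)\prod_{1\le j\ne l\le m}M_{\bar\alpha,\Omega}f_j(x)\Big),\qquad\bar\alpha=\tfrac{\alpha-1}{m}.$$
For the size term $\|\mathfrak M_{\alpha,\Omega}(\vec f)\|_{L^q(\Omega)}$ and the term $\|\mathfrak M_{\alpha-1,\Omega}(\vec f)\|_{L^q(\Omega)}$, proceed as in part (i): dominate by the global operators $\mathfrak M_\alpha$, $\mathfrak M_{\alpha-1}$ applied to $(|f_1|,\dots,|f_m|)$, use their $L^{p_1}\times\cdots\times L^{p_m}$ mapping properties, and use $|\Omega|<\infty$ with H\"older to land in $L^q(\Omega)$. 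The genuinely new ingredient is the mixed product $\mathcal S_{\bar\alpha,\Omega}f_l\cdot\prod_{j\ne l}M_{\bar\alpha,\Omega}f_j$. I would bound it in $L^q$ by H\"older's inequality: write $\frac1q=\frac1{s_l}+\sum_{j\ne l}\frac1{t_j}$ with $s_l,t_j$ chosen so that $\mathcal S_{\bar\alpha,\Omega}\colon L^{p_l}(\Omega)\to L^{s_l}(\Omega)$ and $M_{\bar\alpha,\Omega}\colon L^{p_j}(\Omega)\to L^{t_j}(\Omega)$ hold (up to the finite-measure correction). The spherical maximal operator $\mathcal S_{\bar\alpha}$ is $L^{p}\to L^{s}$ bounded precisely in the Stein spherical-maximal range, which is where the hypothesis $p_j>n/(n-1)$ and the peculiar definition $\beta=\min_j\{(n-1)/p_j,\;n-2n/((n-1)p_j)\}$ and the constraint $1\le\alpha<m\beta+1$ (equivalently $\bar\alpha<\beta$) come from: these exactly guarantee that $\bar\alpha$ is small enough relative to each $p_j$ for both the fractional spherical bound and the fractional Hardy--Littlewood bound to be available simultaneously and for the exponents to add up to $1/q$. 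After checking this exponent bookkeeping, H\"older gives $\|\mathcal S_{\bar\alpha,\Omega}f_l\prod_{j\ne l}M_{\bar\alpha,\Omega}f_j\|_{L^q(\Omega)}\lesssim\|f_l\|_{L^{p_l}(\Omega)}\prod_{j\ne l}\|f_j\|_{L^{p_j}(\Omega)}=\prod_j\|f_j\|_{L^{p_j}(\Omega)}$; summing over $l$ completes the proof.

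\emph{Main obstacle.} The routine pieces are the triangle inequality plus the $L^p$-theory of the maximal operators; the delicate point, concentrated in part~(ii), is verifying that the exponents $s_l$ and $t_j$ demanded by H\"older can actually be realized within the boundedness ranges of the fractional spherical maximal operator $\mathcal S_{\bar\alpha,\Omega}$ and of $M_{\bar\alpha,\Omega}$ at the same time, and in part~(i$'$) the subtlety is justifying the use of global fractional-maximal $L^p$ bounds on an infinite-measure domain, which is exactly what the $p_j$-Sobolev embedding hypothesis is introduced to supply.
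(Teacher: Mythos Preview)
Your approach is essentially the paper's own: apply the pointwise gradient estimates from Theorems~\ref{t:alpha>0pointwise} and~\ref{t:alpha>0pointwise2}, then bound each term via Lemma~\ref{l:3.2} (the $L^p$ theory of $\mathfrak M_{\alpha,\Omega}$), Lemma~\ref{l:3.1} (for $\mathcal S_{\bar\alpha,\Omega}$ and $M_{\bar\alpha,\Omega}$), H\"older, and the finite-measure embedding $L^{q^*}(\Omega)\hookrightarrow L^q(\Omega)$.

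One correction in part~(i): your justification for why $\mathfrak M_{\alpha,\Omega}(\vec f^{\,l})$ lands in $L^q$ is wrong. Replacing $f_l$ by $|\nabla f_l|$ does not change any Lebesgue exponent, so $\mathfrak M_{\alpha,\Omega}(\vec f^{\,l})$ still maps into $L^{q^*}$ with $\tfrac1{q^*}=\sum_j\tfrac1{p_j}-\tfrac\alpha n$, \emph{not} into $L^q$; there is no ``gain of one derivative matched by a shift in target exponent''. The paper simply uses $|\Omega|<\infty$ again, inserting the factor $|\Omega|^{1/q-1/q^*}$ exactly as you did for the size term. Also, in part~(ii) the H\"older exponents can be chosen as $q_j=np_j/(n-\bar\alpha p_j)$, which satisfy $\sum_j 1/q_j=\sum_j 1/p_j-m\bar\alpha/n=1/q$ exactly, so no finite-measure correction is needed for the spherical-maximal product; the hypothesis $|\Omega|<\infty$ enters only through the size term $\|\mathfrak M_{\alpha,\Omega}(\vec f)\|_{L^q(\Omega)}$.
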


\begin{remark}
We would like to make the following remarks:
\begin{enumerate}\item[{(a)}] Theorem \ref{t:alpha=0pointwise}
extends (i) of some results from \cite{KL, Lu2,HO}, and Theorems \ref{t:alpha>0pointwise}-\ref{t:alpha>0pointwise2} extend results from \cite{HKKT}, which correspond to the case $m=1$. Moreover, in (ii) of Theorem \ref{t:alpha>0pointwise}, the domain doesn't need to be bounded;

\item[{(b)}]The inequality (2.1) cannot be replaced by inequality
(1.1) in Theorem \ref{t:alpha>0pointwise}, see \cite[Example 4.1]{HKKT} for the
case $m=1$. Moreover, Theorem \ref{t:alpha>0pointwise} is sharp (see
\cite[Example 4.2]{HKKT} for the case $m=1$);

\item[{(c)}]In Theorems \ref{t:alpha>0pointwise} and \ref{t:alpha>0pointwise2} we had to assume that
$\alpha\geq1$. Indeed, when $0<\alpha<1$,
$\mathfrak{M}_{\alpha,\Omega}$ can be very irregular.
To see this, let $\vec{f}=(f_1,\ldots,f_m)$ with each
$f_j\equiv1$. Then $\mathfrak{M}_{\alpha,\Omega}(\vec{f})(x)=
{\rm dist}(x,\Omega^c)$ for all $x\in\Omega$. It
follows from \cite[Example 4.4]{HKKT} that there
exists a bounded open set $\Omega\subset\mathbb{R}^n$
such that $|\nabla\mathfrak{M}_{\alpha,\Omega}(\vec{f})|\notin
L^r(\Omega)$ for every $r>0$.
\end{enumerate}
\end{remark}
Recall that the Sobolev space with zero boundary values, denoted by $W_0^{1,p}(\Omega)$ with $1\leq p<\infty$, is defined as the completion of $\mathcal{C}_0^\infty(\Omega)$ with respect to the Sobolev norm. In 1998, Kinnunen and
Lindqvist \cite{KL} observed that $M_\Omega: W_0^{1,p}(\Omega)\rightarrow W_0^{1,p}(\Omega)$ is bounded for all $1<p<\infty$. Recently, if $|\Omega|<\infty$, Heikkinen et al. \cite{HKKT} showed that $M_{\alpha,\Omega}: L^p(\Omega)\rightarrow W_0^{1,q}(\Omega)$ is bounded for $p>{n}/{(n-1)}$, $1\leq\alpha<{n}/{p}$ and $q={np}/{(n-(\alpha-1)p)}$. 

We shall establish the following results.
\begin{theorem}[\textbf{ $W_0^{1,p}(\Omega)$ bounds for $\mathfrak{M}_\Omega$}]\label{t:Sobolev0}
Let $1<p_j,q<\infty, \frac{1}{q}=\frac{1}{p_1}
+\cdots+\frac{1}{p_m}$.   \begin{enumerate}
\item[{(i)}]  If each $f_j\in W_0^{1,p_j}
(\Omega)$. Then
$\mathfrak{M}_\Omega(\vec{f})\in W_0^{1,q}(\Omega)$;
\item[{(ii)}]  If each
$f_j\in W^{1,p_j}(\Omega)$. Then
$|\prod_{1\leq j\leq m}f_j|-\mathfrak{M}_\Omega(\vec{f})
\in W_0^{1,q}(\Omega)$.
\end{enumerate}
\end{theorem}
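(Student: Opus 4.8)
The plan is to leverage the pointwise gradient estimate of Theorem~\ref{t:alpha=0pointwise} together with the known characterization that a function $g\in W^{1,q}(\Omega)$ belongs to $W_0^{1,q}(\Omega)$ precisely when its zero extension $\widetilde g$ (extending by $0$ outside $\Omega$) lies in $W^{1,q}(\R^n)$, or equivalently when $g$ can be approximated in $W^{1,q}$-norm by $\mathcal C_0^\infty(\Omega)$ functions. For part (i), first observe that $\mathfrak M_\Omega(\vec f)\in W^{1,q}(\Omega)$ by Theorem~\ref{BD1}. The key structural fact is that $\mathfrak M_\Omega(\vec f)(x)\le C\prod_j (M_{\R^n}\widetilde{f_j})(x)$ where $\widetilde{f_j}$ is the zero extension of $f_j$; since each $f_j\in W_0^{1,p_j}(\Omega)$, we have $\widetilde{f_j}\in W^{1,p_j}(\R^n)$, so the global multilinear result of Liu--Wu \cite{LW1} gives $\prod_j \widetilde{f_j}$-type control. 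The essential point, however, is a support/vanishing argument: I would show that $\mathfrak M_\Omega(\vec f)$ vanishes suitably near $\partial\Omega$ in the $W^{1,q}$ sense. Concretely, take $\varphi_k\in\mathcal C_0^\infty(\Omega)$ with $\varphi_k\to f_j$ in $W^{1,p_j}(\Omega)$; by the sublinearity-type bound $|\mathfrak M_\Omega(\vec f) - \mathfrak M_\Omega(\vec\varphi)|\le \sum_l \mathfrak M_\Omega(f_1,\dots,|f_l-\varphi_l|,\dots)$ combined with the gradient estimate applied to differences, one gets that $\mathfrak M_\Omega(\vec\varphi_k)\to \mathfrak M_\Omega(\vec f)$ in $W^{1,q}(\Omega)$. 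So it suffices to treat $\vec f$ with each $f_j\in\mathcal C_0^\infty(\Omega)$, in which case $\mathrm{dist}(\mathrm{supp}\,f_j,\Omega^c)>0$, and one checks directly that $\mathfrak M_\Omega(\vec f)$ is supported in a fixed compact subset of $\Omega$ (since for $x$ far from all supports, $\mathfrak M_\Omega(\vec f)(x)=0$), hence lies in $W_0^{1,q}(\Omega)$; the conclusion passes to the limit.

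For part (ii), the idea is to reduce to part (i) by subtracting off the "bulk" behavior. Write $P(\vec f)=|\prod_{j} f_j|$ and note that $P(\vec f)\in W^{1,q}(\Omega)$ with $|\nabla P(\vec f)|\le \sum_l \prod_{j\ne l}|f_j|\,|\nabla f_l|$, and that $\mathfrak M_\Omega(\vec f)\in W^{1,q}(\Omega)$ by Theorem~\ref{BD1}; moreover $\mathfrak M_\Omega(\vec f)(x)\ge P(\vec f)(x)$ for a.e.\ $x$ by the Lebesgue differentiation theorem (taking $r\to 0^+$), so $g:=\mathfrak M_\Omega(\vec f)-P(\vec f)\ge 0$ and $g\in W^{1,q}(\Omega)$. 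The claim is that $g\in W_0^{1,q}(\Omega)$, i.e.\ $g$ vanishes at $\partial\Omega$. Here I would use that $\mathrm{dist}(x,\Omega^c)\to 0$ as $x\to\partial\Omega$, which forces the admissible radii in the supremum defining $\mathfrak M_\Omega$ to shrink, so $\mathfrak M_\Omega(\vec f)(x)$ is squeezed toward $P(\vec f)(x)$ near the boundary: more precisely, for $0<r<\mathrm{dist}(x,\Omega^c)$ small, $\frac{1}{|B(x,r)|^m}\prod_j\int_{B(x,r)}|f_j|\to P(\vec f)(x)$ uniformly on suitable approximating data, giving $g\to 0$ at the boundary in the right sense. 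The cleanest route is again density: approximate each $f_j$ by $\varphi_k\in\mathcal C_0^\infty(\Omega)$ in $W^{1,p_j}$, show $\mathfrak M_\Omega(\vec\varphi_k)-P(\vec\varphi_k)\to \mathfrak M_\Omega(\vec f)-P(\vec f)$ in $W^{1,q}(\Omega)$ (using the quasi-sublinearity estimates for both $\mathfrak M_\Omega$ and the product), and for smooth compactly supported data verify $\mathfrak M_\Omega(\vec\varphi)-P(\vec\varphi)\in W_0^{1,q}(\Omega)$ directly, since outside a compact set both terms vanish.

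I expect the main obstacle to be the convergence step: establishing that $\mathfrak M_\Omega(\vec\varphi_k)\to\mathfrak M_\Omega(\vec f)$ in the full $W^{1,q}(\Omega)$ norm (not merely $L^q$) when $\varphi_k\to f_j$ in $W^{1,p_j}(\Omega)$. The $L^q$ convergence is routine from multilinear Hölder and boundedness of the global maximal operator, but control of the gradients requires care: one must bound $\|\nabla\mathfrak M_\Omega(\vec\varphi_k)-\nabla\mathfrak M_\Omega(\vec f)\|_{L^q}$, and since the weak gradient is not sublinear this does not follow formally. The resolution is to invoke the pointwise estimate of Theorem~\ref{t:alpha=0pointwise} applied to the difference configuration together with a careful decomposition $\mathfrak M_\Omega(\vec\varphi_k)-\mathfrak M_\Omega(\vec f)$ into $m$ telescoping terms each replacing one coordinate at a time, controlling each by $\mathfrak M_\Omega$ of a vector containing $|f_j-\varphi_{k,j}|$ or $|\nabla(f_j-\varphi_{k,j})|$ in one slot and bounded $W^{1,p}$ data in the others, then applying Theorem~\ref{BD1}. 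This is exactly the mechanism by which one sidesteps the lack of sublinearity of weak differentiation, and it is the technical heart of the argument; everything else (the Lebesgue-point inequality $\mathfrak M_\Omega(\vec f)\ge P(\vec f)$, membership in $W^{1,q}$, and the compact-support reduction) is comparatively routine.
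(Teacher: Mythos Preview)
Your proposal has a genuine gap in each part.

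\textbf{Part (i).} You correctly identify that the crux is establishing $\mathfrak{M}_\Omega(\vec\varphi_k)\to\mathfrak{M}_\Omega(\vec f)$ in $W^{1,q}(\Omega)$, but your proposed resolution does not work. Theorem~\ref{t:alpha=0pointwise} bounds $|\nabla\mathfrak{M}_\Omega(\vec h)|$ for a \emph{single} input vector $\vec h$; it says nothing about $|\nabla\mathfrak{M}_\Omega(\vec\varphi_k)-\nabla\mathfrak{M}_\Omega(\vec f)|$, since a difference of maximal functions is not a maximal function. The telescoping bound $|\mathfrak{M}_\Omega(\vec\varphi_k)-\mathfrak{M}_\Omega(\vec f)|\le\sum_l\mathfrak{M}_\Omega(\ldots,|f_l-\varphi_{k,l}|,\ldots)$ gives $L^q$ convergence only; passing to gradients here is exactly the hard Sobolev continuity problem (Theorem~\ref{t:alpha=0continuous}), whose proof occupies all of Section~5 and requires the machinery of Lemmas~\ref{l:5.1}--\ref{l:5.7}. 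The paper avoids this entirely: it observes that $\{\mathfrak{M}_\Omega(\vec g^k)\}$ is \emph{bounded} in $W_0^{1,q}(\Omega)$ (by Theorem~\ref{BD1} and compact support), converges to $\mathfrak{M}_\Omega(\vec f)$ in $L^q(\Omega)$, and then uses weak compactness together with the weak closedness of $W_0^{1,q}(\Omega)$ to conclude.

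\textbf{Part (ii).} Your density argument breaks down at the outset: the hypothesis is $f_j\in W^{1,p_j}(\Omega)$, not $W_0^{1,p_j}(\Omega)$, so in general you \emph{cannot} approximate $f_j$ by $\mathcal C_0^\infty(\Omega)$ functions in $W^{1,p_j}$. (That approximability is essentially the definition of $W_0^{1,p_j}$.) If you approximate instead by $\mathcal C^\infty(\Omega)\cap W^{1,p_j}(\Omega)$, the ``both terms vanish outside a compact set'' step fails. The paper's argument is completely different: it proves the Poincar\'e-type pointwise bound
\[
\Big|\,|G(\vec f)(x)|-\mathfrak{M}_\Omega(\vec f)(x)\,\Big|\le C\,\delta(x)\sum_{l=1}^m M_\Omega|\nabla f_l|(x)\prod_{j\ne l}M_\Omega f_j(x),
\]
from which $\int_\Omega\big(|G(\vec f)|-\mathfrak{M}_\Omega(\vec f)\big)^q/\delta(x)^q\,dx<\infty$ follows by H\"older, and then invokes the Hardy-type characterization of $W_0^{1,q}$ (Lemma~\ref{l:4.1}). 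No approximation by compactly supported functions is needed.
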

\begin{theorem}[\textbf{ $W_0^{1,p}(\Omega)$ bounds for $\mathfrak{M}_{\alpha,\Omega}$}]\label{t:Sobolev0'}
Let $\frac{1}{q}=
\frac{1}{p_1}+\cdots+\frac{1}{p_m}-\frac{\alpha-1}{n}$
and $1<q<\infty$.
\begin{enumerate}
\item[{(i)}] If each $f_j\in W^{1,p_j}
(\Omega)$ for $1<p_j<\infty$ and $|\Omega|<\infty$, then
$\mathfrak{M}_{\alpha,\Omega}(\vec{f})\in W_0^{1,q}(\Omega)$;
\item[{(ii)}]  If $\vec{f}=(f_1,\ldots,f_m)$ with each $f_j
\in L^{p_j}(\Omega)$ for $p_j>{n}/{(n-1)}$. Let $1\leq\alpha
<m\beta+1$ with $\beta=\min_{1\leq j\leq m}\{{(n-1)}/{p_j},
n-{2n}/{((n-1)p_j)}\}$. 
If $|\Omega|<\infty$, then $\mathfrak{M}_{\alpha,\Omega}
(\vec{f})\in W_0^{1,q}(\Omega)$.
\end{enumerate}
\end{theorem}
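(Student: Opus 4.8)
\emph{Proof strategy for Theorem~\ref{t:Sobolev0'}.}
Write $u=\mathfrak{M}_{\alpha,\Omega}(\vec f)$ and $d(x)=\dist(x,\Omega^c)$; the plan is to treat (i) and (ii) by the same argument. I would use two inputs. The first is that $u\in W^{1,q}(\Omega)$: this is Theorem~\ref{t:alpha>0pointwise}(i) in case (i) and Theorem~\ref{t:alpha>0pointwise2} in case (ii). The second is the elementary pointwise bound obtained by extracting a single factor $r\le d(x)$ from $r^\alpha$ in the defining supremum (legitimate since $\alpha\ge1$):
$$u(x)=\sup_{0<r<d(x)} r\,\frac{r^{\alpha-1}}{|B(x,r)|^{m}}\prod_{j=1}^{m}\int_{B(x,r)}|f_j|\ \le\ d(x)\,\mathfrak{M}_{\alpha-1,\Omega}(\vec f)(x)=:d(x)\,w(x).$$
Since balls $B(x,r)$ with $r<d(x)$ lie inside $\Omega$, one has $w\le\mathfrak{M}_{\alpha-1}(\tilde f_1,\dots,\tilde f_m)$ on $\Omega$, where $\tilde f_j=f_j\chi_\Omega\in L^{p_j}(\R^n)$ in both cases; as $\frac1q=\frac1{p_1}+\cdots+\frac1{p_m}-\frac{\alpha-1}{n}$ with $1<q<\infty$, the strong-type $L^{p_1}\times\cdots\times L^{p_m}\to L^q$ bound for the global $m$-linear fractional maximal operator \cite{CX} gives $w\in L^q(\Omega)$, i.e.\ $w^q\in L^1(\Omega)$.

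Next comes the truncation. For $\eps>0$ set $\phi_\eps=\min\{1,\eps^{-1}(d-\eps)_+\}$, which is $\eps^{-1}$-Lipschitz, takes values in $[0,1]$, equals $1$ on $\{d>2\eps\}$, is supported in $\{d\ge\eps\}$, and satisfies $|\nabla\phi_\eps|\le\eps^{-1}\chi_{\{\eps<d<2\eps\}}$ a.e.\ (using $|\nabla d|\le1$). Because $|\Omega|<\infty$ does not force $\Omega$ to be bounded, I would also fix a cutoff $\psi_R\in C_0^\infty(\R^n)$ with $0\le\psi_R\le1$, $\psi_R\equiv1$ on $B(0,R)$, $\supp\psi_R\subset B(0,2R)$, $|\nabla\psi_R|\le CR^{-1}$. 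Then $\phi_\eps\psi_R u\in W^{1,q}(\Omega)$ is supported in the compact set $\{d\ge\eps\}\cap\overline{B(0,2R)}\subset\Omega$, so mollifying its zero-extension shows $\phi_\eps\psi_R u\in W_0^{1,q}(\Omega)$. Letting $R\to\infty$, dominated convergence (with majorants $|u|,|\nabla u|\in L^q$) together with $\|\phi_\eps u\,\nabla\psi_R\|_{L^q(\Omega)}\le CR^{-1}\|u\|_{L^q(\Omega)}\to0$ gives $\phi_\eps\psi_R u\to\phi_\eps u$ in $W^{1,q}(\Omega)$, hence $\phi_\eps u\in W_0^{1,q}(\Omega)$ for every $\eps>0$.

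Finally I would let $\eps\to0$. Since $d>0$ on $\Omega$, we have $\phi_\eps\to1$ a.e., so $\|(1-\phi_\eps)u\|_{L^q}\to0$ and $\|(1-\phi_\eps)\nabla u\|_{L^q}\to0$ by dominated convergence. For the remaining contribution to $\nabla(\phi_\eps u)-\nabla u$, the pointwise bound is exactly what is needed: on $\{d<2\eps\}$ one has $|u|\le d\,w\le 2\eps\,w$, so
$$\|u\,\nabla\phi_\eps\|_{L^q(\Omega)}^{q}\ \le\ \eps^{-q}\int_{\{\eps<d<2\eps\}}|u|^{q}\ \le\ 2^{q}\int_{\{d<2\eps\}}w^{q},$$
and the right-hand side tends to $0$ as $\eps\to0$ by dominated convergence, since $w^q\in L^1(\Omega)$ and $\chi_{\{d<2\eps\}}\downarrow0$ a.e.\ on $\Omega$. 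Thus $\phi_\eps u\to u$ in $W^{1,q}(\Omega)$ (diagonalizing in $R$ if one wants genuine elements of $W_0^{1,q}$), and since $W_0^{1,q}(\Omega)$ is closed in $W^{1,q}(\Omega)$ we conclude $u=\mathfrak{M}_{\alpha,\Omega}(\vec f)\in W_0^{1,q}(\Omega)$.

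The main obstacle is choosing the right pointwise majorant. One cannot dominate $u$ by $d^\alpha\,\mathfrak{M}_{\Omega}(\vec f)$ and invoke a lattice principle of the form ``$|u|\le g\in W_0^{1,q}$'': the factor $\mathfrak{M}_{\Omega}(\vec f)$ (or $\mathfrak{M}_{\alpha-1,\Omega}(\vec f)$) need not lie in $W^{1,q}(\Omega)$ --- indeed $\mathfrak{M}_{\beta,\Omega}$ can be very irregular for $0<\beta<1$ --- and in case (ii) it is known only to be in $L^{q_0}(\Omega)$ with $\frac1{q_0}=\sum_j\frac1{p_j}\ge\frac1q$, which for $\alpha>1$ is strictly weaker than the $L^q(\Omega)$ control that the $\eps^{-1}$ blow-up of $\nabla\phi_\eps$ demands. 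Extracting exactly one factor $r\le d(x)$ produces boundary decay at the rate $d(x)$ with an $L^q(\Omega)$ remainder, which is the precise balance that makes $\|u\,\nabla\phi_\eps\|_{L^q}\to0$; everything else is standard truncation-and-mollification, with only the minor extra bookkeeping of a second cutoff to handle domains of finite measure that may be unbounded.
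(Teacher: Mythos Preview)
Your proof is correct and uses the same two key ingredients as the paper: the membership $u=\mathfrak{M}_{\alpha,\Omega}(\vec f)\in W^{1,q}(\Omega)$ and the pointwise inequality $u(x)\le d(x)\,\mathfrak{M}_{\alpha-1,\Omega}(\vec f)(x)$ combined with $\mathfrak{M}_{\alpha-1,\Omega}(\vec f)\in L^q(\Omega)$ (this is exactly the paper's (4.2) and the ensuing $L^q$ estimate via Lemma~\ref{l:3.2}). The only genuine difference is in how you pass from the Hardy-type condition $u/d\in L^q(\Omega)$ to $u\in W_0^{1,q}(\Omega)$: the paper simply invokes the Kinnunen--Martio criterion (Lemma~\ref{l:4.1}, from \cite{KM}) as a black box, while you effectively reprove that lemma in place via the Lipschitz cutoff $\phi_\eps$ together with the spatial cutoff $\psi_R$. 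Your route is more self-contained and makes the ``$|\Omega|<\infty$ but $\Omega$ possibly unbounded'' point explicit; the paper's route is a two-line citation. Both arrive at the same conclusion from the same estimate.
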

\begin{remark}
(i) of Theorem \ref{t:Sobolev0}
extends \cite[Corollary 4.1]{KL} and (ii) of Theorem \ref{t:Sobolev0'}
extends \cite[Corollary 3.5]{HKKT}, which correspond to
the case $m=1$.
\end{remark}
\subsection{Continuity of maximal operators on Sobolev spaces}

Now, we present the continuity of  $\mathfrak{M}_{\alpha,\Omega}$ on Sobolev
spaces.
\begin{theorem}[\textbf{Sobolev continuity for $\mathfrak M_\Omega$}]\label{t:alpha=0continuous}
Let $1<p_1,\ldots, p_m, q<\infty$ and $\frac{1}{q}=\frac{1}{p_1}+\cdots+ \frac{1}{p_m}$. Then the mapping $\mathfrak{M}_\Omega:W^{1,p_1}(\Omega)\times\ldots\times W^{1,p_m}(\Omega)\rightarrow W^{1,q}(\Omega)$
is continuous.
\end{theorem}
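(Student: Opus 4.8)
The plan is to follow the now-standard scheme for maximal operator continuity on Sobolev spaces, adapting the argument of Luiro \cite{Lu1, Lu2} and Carneiro--Madrid \cite{CM2} to the local multilinear setting. Since $\mathfrak M_\Omega$ is already known to be bounded from $W^{1,p_1}(\Omega)\times\cdots\times W^{1,p_m}(\Omega)$ into $W^{1,q}(\Omega)$ by Theorem \ref{BD1}, and since $L^p$-type boundedness of $\mathfrak M_\Omega$ together with multilinear H\"older readily gives continuity in the $L^q(\Omega)$ norm (the operator is sublinear in each slot, so $\|\mathfrak M_\Omega(\vec f)-\mathfrak M_\Omega(\vec g)\|_{L^q}$ is controlled by a telescoping sum of terms each involving one factor $\|f_j-g_j\|_{L^{p_j}}$ and the others bounded), the entire difficulty is concentrated in proving continuity of the gradient: if $\vec f^{(k)}\to\vec f$ in the product Sobolev norm, then $\nabla\mathfrak M_\Omega(\vec f^{(k)})\to\nabla\mathfrak M_\Omega(\vec f)$ in $L^q(\Omega)$.

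First I would reduce to a subsequence argument: it suffices to show that every subsequence of $\{\nabla\mathfrak M_\Omega(\vec f^{(k)})\}$ has a further subsequence converging to $\nabla\mathfrak M_\Omega(\vec f)$ in $L^q(\Omega)$. Passing to such a subsequence, one may assume $f_j^{(k)}\to f_j$ and $\nabla f_j^{(k)}\to\nabla f_j$ pointwise a.e.\ and dominated by an $L^{p_j}$ function. The core of Luiro's method is a formula for the weak gradient of the maximal function in terms of the ``maximizing radii'': for a.e.\ $x$ one writes $\nabla\mathfrak M_\Omega(\vec f)(x)$ as a convex-combination / selection built from $\nabla_x\big(\frac{1}{|B(x,r)|^m}\prod_j\int_{B(x,r)}|f_j|\big)$ evaluated at radii $r$ that (nearly) attain the supremum defining $\mathfrak M_\Omega(\vec f)(x)$, together with the boundary constraint $r<\operatorname{dist}(x,\Omega^c)$. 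One then needs an upper-semicontinuity statement for the set of maximizing radii: if $\vec f^{(k)}\to\vec f$, then along our subsequence the maximizing radii for $\vec f^{(k)}$ at $x$ accumulate only at maximizing radii for $\vec f$ at $x$, for a.e.\ $x$. This is where the local structure enters --- one must track that the constraint $r<\operatorname{dist}(x,\Omega^c)$ behaves well, using continuity of the averages in $r$ up to the boundary and the fact (as in \cite{KL, Lu2}) that the relevant averages extend continuously.

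The key steps, in order, would be: (1) establish $L^q(\Omega)$-continuity of $\mathfrak M_\Omega$ via sublinearity and multilinear H\"older; (2) record the Luiro-type representation of $\nabla\mathfrak M_\Omega(\vec g)(x)$ for $\vec g$ in the product Sobolev space, valid a.e., expressing it through the gradient of the averaging functional at maximizing radii and identifying the relevant convergence-of-selections lemma; (3) prove the a.e.\ convergence $\nabla\mathfrak M_\Omega(\vec f^{(k)})(x)\to\nabla\mathfrak M_\Omega(\vec f)(x)$ along the subsequence, using upper semicontinuity of maximizing radii plus the pointwise convergence of $f_j^{(k)}$ and $\nabla f_j^{(k)}$ and the dominated pointwise bound from Theorem \ref{t:alpha=0pointwise}, namely $|\nabla\mathfrak M_\Omega(\vec f^{(k)})(x)|\le 2\sum_l\mathfrak M_\Omega((\vec f^{(k)})^l)(x)$; (4) upgrade a.e.\ convergence to $L^q(\Omega)$ convergence by a generalized dominated convergence / Vitali argument, where the dominating functions $2\sum_l\mathfrak M_\Omega((\vec f^{(k)})^l)$ converge in $L^q(\Omega)$ to $2\sum_l\mathfrak M_\Omega(\vec f^l)$ (again by step (1) applied to the shifted tuples $(\vec f^{(k)})^l$, whose entries converge in the appropriate $L^{p_j}$ norms since $|\nabla f_l^{(k)}|\to|\nabla f_l|$ in $L^{p_l}$), so that the Brezis--Lieb / dominated-convergence-with-varying-majorant principle applies.

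The main obstacle is step (3): establishing that the set-valued map sending $\vec g$ to its maximizing radii at a point $x$ is, in the appropriate sense, upper semicontinuous, and that one can select gradients of the averaging functional consistently so that they converge. This is exactly the delicate point in Luiro's original proof, and in the present setting it is complicated both by the multilinearity (a product of $m$ averages, each of which must be differentiated and each factor's convergence tracked) and by the local constraint $r<\operatorname{dist}(x,\Omega^c)$, which means a maximizing ``radius'' may sit on the boundary of the admissible interval and must be handled separately --- here I would lean on the techniques of \cite{Lu2} and \cite{KL} for dealing with the distance-to-boundary cutoff, combined with the product rule bookkeeping from \cite{LW1}. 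A secondary technical point is ensuring measurability of all the selections involved, which is standard but must be stated carefully.
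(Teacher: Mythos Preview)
Your outline shares the essential ingredients with the paper's proof (Luiro-type gradient formula at maximizing radii, upper-semicontinuity of the set of maximizing radii, the pointwise bound from Theorem~\ref{t:alpha=0pointwise} as a majorant), but the way you assemble them in steps (3)--(4) has a genuine gap at the boundary radius $r=\delta(x)$, and this is not a secondary technicality.

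The plan ``establish a.e.\ convergence of $\nabla\mathfrak M_\Omega(\vec f^{(k)})$, then apply generalized dominated convergence with varying majorants'' breaks down on the set $\{x:\delta(x)\in\mathcal R(\vec f)(x)\}$. On that set Lemma~\ref{l:5.3} gives \emph{no} identity for $D_l\mathfrak M_\Omega(\vec f)(x)$; the paper (following \cite{Lu2}) only obtains one-sided inequalities, namely $D_l\mathfrak M_\Omega(\vec f)(x)\ge\sum_i u_{x,\vec f^i}(\delta(x))$ on $B^{+}$ and the reverse inequality on $B^{-}$, and these can be strict. Consequently, if the maximizing radii $r_k$ for $\vec f^{(k)}$ satisfy $r_k<\delta(x)$ and $r_k\to\delta(x)$, the formula for $D_l\mathfrak M_\Omega(\vec f^{(k)})(x)$ yields $\sum_i u_{x,(\vec f^{(k)})^i}(r_k)\to\sum_i u_{x,\vec f^i}(\delta(x))$, which need not equal $D_l\mathfrak M_\Omega(\vec f)(x)$. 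So pointwise a.e.\ convergence of the gradients can genuinely fail at such $x$, and no subsequence or diagonal argument repairs this.

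The paper does not attempt a.e.\ convergence there. Instead it shows that for each $\epsilon>0$ the measure of $\{x\in B^{\pm}\setminus(K^j\cup B^j):\pm D_n(\mathfrak M_\Omega(\vec f)-\mathfrak M_\Omega(\vec f_j))(x)\le -\epsilon\}$ tends to zero (a one-sided convergence-in-measure statement), and then invokes Lemma~\ref{l:5.7} (Luiro's Corollary~2.7), which combines this one-sided information with the distributional convergence $\mathfrak M_\Omega(\vec f_j)-\mathfrak M_\Omega(\vec f)\to 0$ (from your step (1)) and the $L^q$ majorant (your step (4)) to deduce $\|D_n(\mathfrak M_\Omega(\vec f_j)-\mathfrak M_\Omega(\vec f))\|_{L^q}\to 0$ on that set. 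This mechanism is essentially different from dominated convergence and is the piece your outline is missing: the techniques of \cite{Lu2} you plan to invoke for the boundary case \emph{are} precisely this one-sided argument, and they do not produce the a.e.\ convergence your step (3) requires. You should replace the global ``a.e.\ + dominated'' scheme by the paper's decomposition $K=G\cup(K\setminus G)$, handle $G$ (where $\delta(x)\notin\mathcal R(\vec f)(x)$) essentially as you describe, and handle $K\setminus G$ via the one-sided inequalities together with Lemma~\ref{l:5.7}.
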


\begin{theorem}[\textbf{Sobolev continuity for $\mathfrak M_{\alpha,\Omega}$}]\label{t:alpha>0continuous}
Let $1<p_1,\ldots, p_m,q<\infty$, $1\leq\alpha<mn$, and $\frac{1}{q}=\frac{1}{p_1}+ \cdots+\frac{1}{p_m}-\frac{\alpha-1}{n}$.  If $|\Omega|<\infty$, then the mapping $\mathfrak{M}_{\alpha,\Omega}:W^{1,p_1}(\Omega)\times\ldots\times W^{1,p_m}(\Omega)\rightarrow W^{1,q}(\Omega)$ is continuous.
\end{theorem}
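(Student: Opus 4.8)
The plan is to follow the now-standard strategy for continuity of maximal operators on Sobolev spaces, originating in Luiro's work \cite{Lu1,Lu2} and adapted to the fractional multilinear local setting, combining it with the boundedness and pointwise estimates already established above (Theorem~\ref{t:alpha>0pointwise}(i) and Theorem~\ref{t:boundedness}(i), which require $|\Omega|<\infty$). Fix a vector $\vec f=(f_1,\dots,f_m)$ and a sequence $\vec f_k=(f_{1,k},\dots,f_{m,k})$ with $f_{j,k}\to f_j$ in $W^{1,p_j}(\Omega)$ for each $j$; we must show $\mathfrak M_{\alpha,\Omega}(\vec f_k)\to\mathfrak M_{\alpha,\Omega}(\vec f)$ in $W^{1,q}(\Omega)$. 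First I would handle convergence in $L^q$: from the trivial pointwise bound $|\mathfrak M_{\alpha,\Omega}(\vec f_k)(x)-\mathfrak M_{\alpha,\Omega}(\vec f)(x)|\le \sum_{l}\mathfrak M_{\alpha,\Omega}(f_1,\dots,|f_{l,k}-f_l|,\dots)(x)\cdot(\text{product of }L^{p_j}\text{-type factors})$, together with the (sub)multilinear boundedness $\mathfrak M_{\alpha,\Omega}:L^{p_1}\times\cdots\times L^{p_m}\to L^{q_0}$ on the finite-measure domain (which follows from the $W^{1,p}$ boundedness or directly), one gets $\|\mathfrak M_{\alpha,\Omega}(\vec f_k)-\mathfrak M_{\alpha,\Omega}(\vec f)\|_{L^q(\Omega)}\to 0$. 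Passing to a subsequence we may also assume $f_{j,k}\to f_j$ and $\nabla f_{j,k}\to\nabla f_j$ pointwise a.e.\ and dominated by a fixed $W^{1,p_j}$ function.

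The heart of the matter is the gradient convergence $\nabla\mathfrak M_{\alpha,\Omega}(\vec f_k)\to\nabla\mathfrak M_{\alpha,\Omega}(\vec f)$ in $L^q(\Omega)$. Here I would follow Luiro's scheme. For each $x$ where the supremum defining $\mathfrak M_{\alpha,\Omega}(\vec f)(x)$ is attained (or nearly attained), let $R(x)$ denote the set of "maximizing radii". The key structural lemma to prove is a formula for the weak gradient of the form
$$\nabla\mathfrak M_{\alpha,\Omega}(\vec f)(x)=\nabla_x\Big[\frac{r^\alpha}{|B(x,r)|^m}\prod_{j=1}^m\int_{B(x,r)}|f_j|\,dy\Big]\Big|_{r=r(x)}$$
for a.e.\ $x$ and any measurable selection $r(x)\in R(x)$, valid because at a maximizing radius the $r$-derivative vanishes (or one uses the envelope/Danskin-type argument as in \cite{Lu2,HKKT}). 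Differentiating the averaged quantity produces exactly the terms $\alpha\mathfrak M_{\alpha-1,\Omega}(\vec f)$ and $2\sum_l\mathfrak M_{\alpha,\Omega}(\vec f^{\,l})$ controlled in Theorem~\ref{t:alpha>0pointwise}. Then I would establish the crucial continuity-of-maximizing-radii statement: if $\vec f_k\to\vec f$ in the product Sobolev norm and $r_k\in R_k(x)$ are maximizing radii for $\vec f_k$, then (along a subsequence, for a.e.\ $x$) any limit point of $r_k$ lies in $R(x)$; moreover $\mathrm{dist}(x,\Omega^c)$ being locally bounded away from $0$ on compact subsets of $\Omega$ keeps these radii in a compact range. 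Combining the explicit gradient formula, the pointwise convergence $\nabla f_{j,k}\to\nabla f_j$, continuity of the maps $r\mapsto\frac{r^\alpha}{|B(x,r)|^m}\prod_j\int_{B(x,r)}|f_{j,k}|$, and the upper semicontinuity of the radius selection gives $\nabla\mathfrak M_{\alpha,\Omega}(\vec f_k)(x)\to\nabla\mathfrak M_{\alpha,\Omega}(\vec f)(x)$ pointwise a.e.

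Finally I would upgrade pointwise a.e.\ convergence of the gradients to $L^q$-convergence via a generalized dominated convergence / Brezis--Lieb type argument: using Theorem~\ref{t:alpha>0pointwise}(i), $|\nabla\mathfrak M_{\alpha,\Omega}(\vec f_k)|\le \alpha\mathfrak M_{\alpha-1,\Omega}(\vec f_k)+2\sum_l\mathfrak M_{\alpha,\Omega}(\vec f_k^{\,l})$, and the right-hand side converges in $L^q(\Omega)$ (by the already-proven boundedness applied to the convergent sequences $f_{j,k}\to f_j$, $\nabla f_{j,k}\to\nabla f_j$, exactly as in the $L^q$ step); since an a.e.-convergent sequence dominated by an $L^q$-convergent sequence converges in $L^q$, we conclude. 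A standard subsequence-of-every-subsequence argument removes the passage to subsequences and yields convergence of the full sequence. I expect the main obstacle to be the rigorous justification of the weak-gradient formula at maximizing radii together with the measurable selection and upper semicontinuity of $R(x)$ — this is the technically delicate part of Luiro's method, and in the fractional case $\alpha>0$ one must check that the extra $\mathfrak M_{\alpha-1,\Omega}$ term behaves well under the limit, which is where the hypothesis $|\Omega|<\infty$ (ensuring $\mathfrak M_{\alpha-1,\Omega}$ and $\mathfrak M_{\alpha,\Omega}$ map the relevant Sobolev/Lebesgue spaces boundedly) is essential.
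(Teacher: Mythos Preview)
Your overall strategy is close to the paper's, and the $L^q$-convergence step, the appeal to Lemma~\ref{l:5.3} (the weak-gradient formula at maximizing radii), and the stability of maximizing radii (Lemma~\ref{l:5.1}) are all the right ingredients. However, there is a genuine gap in your pointwise-a.e.\ convergence step for the gradients.

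The problem is the boundary case $r=\delta(x)$. The explicit gradient formula you invoke (Lemma~\ref{l:5.3}) is valid only for $r\in\mathcal R_\alpha(\vec f)(x)$ with $r<\delta(x)$; at $r=\delta(x)$ the radius is an \emph{endpoint} of the admissible interval, so it need not be a critical point of $r\mapsto u_{x,\vec f,\alpha}(r)$, and in general only a one-sided inequality holds (see (5.85) and (5.94) in the paper). The set $\{x:\delta(x)\in\mathcal R_\alpha(\vec f)(x)\}$ can have positive measure, and on it your argument ``limit of $r_k$ lies in $R(x)$, plug into the formula, conclude convergence'' breaks down: if $r_k(x)\to\delta(x)$, or if $\delta(x)\in\mathcal R_\alpha(\vec f_k)(x)$ for infinitely many $k$, you have no formula to compare against. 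Consequently, the claimed pointwise a.e.\ convergence of $\nabla\mathfrak M_{\alpha,\Omega}(\vec f_k)$ is not established, and the dominated/Brezis--Lieb step cannot be launched.

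The paper handles exactly this case by a further decomposition of $K\setminus G=\{x\in K:\delta(x)\in\mathcal R_\alpha(\vec f)(x)\}$ into $B^j$ (where $\delta(x)$ is also maximizing for $\vec f_j$), $B^{+}$, and $B^{-}$ (according to the sign of $\delta(x+h_ke_l)-\delta(x)$). On $B^j$ one uses a direct difference estimate (Lemma~\ref{l:5.5}); on $B^{\pm}$ one only obtains a one-sided inequality for $D_l\mathfrak M_{\alpha,\Omega}(\vec f)-D_l\mathfrak M_{\alpha,\Omega}(\vec f_j)$, and then invokes a tailored lemma (Lemma~\ref{l:5.7}) that upgrades one-sided convergence in measure, together with the $L^q$ domination you already identified, to $L^q$-convergence. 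This boundary-radius analysis is the technically delicate core of the proof and is missing from your plan; your ``main obstacle'' paragraph correctly flags the gradient formula as subtle but does not isolate the endpoint issue, which is precisely where the envelope/Danskin heuristic fails.
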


\section{Proofs of Theorems \ref{t:alpha=0pointwise}-\ref{t:boundedness}}

\subsection{Preliminaries}

The following norm estimates will provide a foundation for our analysis.

\begin{lemma}[\cite{HKKT}]\label{l:3.1}
Let $q={np}/{(n-\alpha p)}$. Then,  the following results are true.
\begin{enumerate}
\item[{(i)}] Let $p>1$, $0<\alpha<{n}/{p}$.  Then $\|M_{\alpha,\Omega}f\|_{L^q(\Omega)}\leq C\|f\|_{L^p(\Omega)}.$

\item[{(ii)}]  Let $n\geq 2$, $p>{n}/{(n-1)}$, $0\leq \alpha<\min\{{(n-1)}/{p},n-{2n}/{((n-1)p)}\}$. Then $\|\mathcal{S}_{\alpha,\Omega}f\|_{L^q(\Omega)}\leq C\|f\|_{L^p(\Omega)}.$\end{enumerate}
\end{lemma}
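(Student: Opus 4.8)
The plan is to reduce each of the two estimates to its global analogue on $\R^n$ and then invoke known bounds, which is the route taken in \cite{HKKT}. Fix $x\in\Omega$ and let $\bar f=f\chi_\Omega$ be the extension of $f$ by zero. If $0<r<{\rm dist}(x,\Omega^c)$ then $\overline{B(x,r)}\subset\Omega$, so $\int_{B(x,r)}|f|\,dy=\int_{B(x,r)}|\bar f|\,dy$ and $\int_{\partial B(x,r)}|f|\,d\mathcal H^{n-1}=\int_{\partial B(x,r)}|\bar f|\,d\mathcal H^{n-1}$. Since the global operators take the supremum over all $r>0$, restricting to $r<{\rm dist}(x,\Omega^c)$ can only make the quantity smaller, so pointwise on $\Omega$,
$$M_{\alpha,\Omega}f(x)\le M_\alpha\bar f(x)\qquad\text{and}\qquad \mathcal S_{\alpha,\Omega}f(x)\le\mathcal S_\alpha\bar f(x),$$
where $M_\alpha$ and $\mathcal S_\alpha$ denote the global fractional maximal and fractional spherical maximal operators. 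As $\|\bar f\|_{L^p(\R^n)}=\|f\|_{L^p(\Omega)}$, it then suffices to prove $\|M_\alpha g\|_{L^q(\R^n)}\lesssim\|g\|_{L^p(\R^n)}$ and $\|\mathcal S_\alpha g\|_{L^q(\R^n)}\lesssim\|g\|_{L^p(\R^n)}$ with $q=np/(n-\alpha p)$. It is worth noting that this ``local is no harder than global'' reduction works cleanly for $L^p$ bounds precisely because no derivatives are involved; it is the gradient estimates, not these, where $\partial\Omega$ genuinely interferes.

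For part (i) I would run the standard elementary argument. For $g\ge0$ and $r>0$, H\"older's inequality gives $\frac{1}{|B(x,r)|}\int_{B(x,r)}g\le c\,r^{-n/p}\|g\|_{L^p}$, while trivially $\frac{1}{|B(x,r)|}\int_{B(x,r)}g\le Mg(x)$, with $M$ the Hardy--Littlewood maximal operator. Multiplying by $r^\alpha$ and optimizing the minimum of these two upper bounds over $r$ (here $\alpha<n/p$ guarantees the second bound decays) yields the pointwise inequality $M_\alpha g(x)\lesssim\|g\|_{L^p}^{\alpha p/n}\bigl(Mg(x)\bigr)^{1-\alpha p/n}$. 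Raising to the power $q$, using the identity $q(1-\alpha p/n)=p$ valid for $q=np/(n-\alpha p)$, and invoking $\|Mg\|_{L^p}\lesssim\|g\|_{L^p}$ (true since $p>1$) gives $\|M_\alpha g\|_{L^q}\lesssim\|g\|_{L^p}$.

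Part (ii) carries the real content: the global bound $\|\mathcal S_\alpha g\|_{L^q(\R^n)}\lesssim\|g\|_{L^p(\R^n)}$. I would derive it by interpolating between two facts. The first is the spherical maximal theorem of Stein (for $n\ge3$) and Bourgain (for $n=2$), giving boundedness of $\mathcal S_0$ on $L^p(\R^n)$ for $p>n/(n-1)$ — the $\alpha=0$, $q=p$ corner. The second is an $L^2$-based smoothing estimate for the dilated spherical averages $A_rg=g*d\sigma_r$ that encodes the $\alpha$ orders of gain, resting on the decay $\widehat{d\sigma}(\xi)=O(|\xi|^{-(n-1)/2})$ of the surface measure together with a Littlewood--Paley / square-function decomposition and a Sobolev embedding in the dilation parameter. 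Combining these (with the trivial bound of a single average from $L^1$ into the surface measure) pins down the admissible triple $(p,\alpha,q)$; the two hypotheses $\alpha<(n-1)/p$ and $\alpha<n-2n/((n-1)p)$ are exactly the thresholds forced by the Fourier decay exponent $(n-1)/2$ of $d\sigma$ under this interpolation. The main obstacle is making this second, Fourier-analytic estimate rigorous — it is the genuine analytic heart of the statement — whereas the reduction to the global operators and part (i) are routine.
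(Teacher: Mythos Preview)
The paper does not prove this lemma at all: it is stated with the citation \cite{HKKT} in the header and quoted as a known result, so there is no ``paper's own proof'' to compare against. Your reduction from the local operators to the global ones via extension by zero is exactly the right move (and is the mechanism implicit in the paper's Remark after Lemma~\ref{l:3.2}); your argument for part~(i) is the standard Hedberg pointwise inequality and is complete. For part~(ii) you correctly identify the deep ingredients---the Stein--Bourgain spherical maximal theorem at $\alpha=0$ and the Fourier decay $|\widehat{d\sigma}(\xi)|\lesssim|\xi|^{-(n-1)/2}$ driving the fractional gain---and you are honest that the interpolation step is only sketched; filling it in would require the full argument from Schlag or the relevant section of \cite{HKKT}, so as written part~(ii) is an accurate outline rather than a proof.
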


\begin{lemma}\label{l:3.2}
Let $\vec{f}= (f_1,\ldots,f_m)$ with each $f_j\in L^{p_j}(\Omega)$ for $p_j>1$. Let $0\leq\alpha<mn$ and $\frac{1}{q}= \frac{1}{p_1}+\cdots+\frac{1}{p_m}-\frac{\alpha}{n}$. Suppose that one of the following conditions holds:

\begin{itemize}
\item[{\rm (i)}] $\alpha=0$, $1\leq q\leq\infty$ and $1<p_1, \ldots,p_m\leq\infty$;

\item[{\rm (ii)}] $0<\alpha<n$, $1\leq q<\infty$ and $1<p_1, \ldots,p_m\leq\infty$;

\item[{\rm (iii)}] $n\leq\alpha<mn$, $1\leq q<\infty$ and $1<p_1,\ldots,p_m<\infty$.
\end{itemize}
Then, we have $\|\mathfrak{M}_{\alpha,\Omega}(\vec{f})\|_{L^q(\Omega)}\leq C\prod\limits_{j=1}^m\|f_j\|_{L^{p_j}(\Omega)}.$
\end{lemma}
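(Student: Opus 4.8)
The plan is to reduce the multilinear $L^q$ bound for $\mathfrak{M}_{\alpha,\Omega}$ to known linear estimates via a pointwise factorization of the operator, followed by Hölder's inequality. First I would observe that for every $x\in\Omega$ and every admissible radius $0<r<\dist(x,\Omega^c)$ one has, trivially from the definition,
\[
\frac{r^\alpha}{|B(x,r)|^m}\prod_{j=1}^m\int_{B(x,r)}|f_j(y)|\,dy=\prod_{j=1}^m\left(\frac{r^{\alpha/m}}{|B(x,r)|}\int_{B(x,r)}|f_j(y)|\,dy\right)\le\prod_{j=1}^m M_{\alpha/m,\Omega}f_j(x),
\]
since each factor is dominated by the corresponding local fractional maximal function at the same point. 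Taking the supremum over $r$ on the left gives the pointwise bound $\mathfrak{M}_{\alpha,\Omega}(\vec f)(x)\le\prod_{j=1}^m M_{\alpha/m,\Omega}f_j(x)$; note this works because each of the $m$ factors is monotone in a way compatible with the sup, i.e. $\sup_r\prod_j(\cdots)\le\prod_j\sup_r(\cdots)$.

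Next I would apply Hölder's inequality in $\Omega$ with exponents $q_j$ defined by $\frac{1}{q_j}=\frac{1}{p_j}-\frac{\alpha}{mn}$, so that $\frac{1}{q_1}+\cdots+\frac{1}{q_m}=\frac{1}{p_1}+\cdots+\frac{1}{p_m}-\frac{\alpha}{n}=\frac1q$, obtaining
\[
\|\mathfrak{M}_{\alpha,\Omega}(\vec f)\|_{L^q(\Omega)}\le\prod_{j=1}^m\|M_{\alpha/m,\Omega}f_j\|_{L^{q_j}(\Omega)}.
\]
It then remains to bound each factor $\|M_{\alpha/m,\Omega}f_j\|_{L^{q_j}(\Omega)}\le C\|f_j\|_{L^{p_j}(\Omega)}$. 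When $0<\alpha/m<n/p_j$ this is exactly Lemma \ref{l:3.1}(i) applied with $\alpha$ there equal to $\alpha/m$, since $q_j=np_j/(n-(\alpha/m)p_j)$. When $\alpha=0$, $M_{0,\Omega}f_j=M_\Omega f_j$ is the local Hardy–Littlewood maximal operator, which is bounded on $L^{p_j}(\Omega)$ for $1<p_j\le\infty$ (with $q_j=p_j$), covering case (i); the case $p_j=\infty$ and the endpoint $q=\infty$ are handled directly since $M_{0,\Omega}f_j(x)\le\|f_j\|_{L^\infty}$ pointwise.

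The main obstacle is case (iii), where $n\le\alpha<mn$: here $\alpha/m$ can be as large as $n$, so $\alpha/m<n/p_j$ may fail and Lemma \ref{l:3.1}(i) does not apply directly with the naive exponent split. To handle this I would split the fractional order unevenly: write $\alpha=\alpha_1+\cdots+\alpha_m$ with each $0<\alpha_j<n/p_j$ (possible because $\sum_j n/p_j>n/p_j\cdot$ nothing forces it — rather, we need $\sum_j\min(n,n/p_j)$-type room; more carefully, since $1<p_j<\infty$ we have $n/p_j>0$ for all $j$, and $\sum_j$ of the allowed maxima exceeds $\alpha$ because $\frac1q=\sum\frac1{p_j}-\frac\alpha n>0$ forces $\alpha<n\sum\frac1{p_j}$, i.e. $\alpha<\sum_j\frac n{p_j}$, so one may choose $0<\alpha_j<n/p_j$ with $\sum\alpha_j=\alpha$). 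Then $\frac{r^\alpha}{|B(x,r)|^m}\prod_j\int_{B(x,r)}|f_j|\le\prod_j M_{\alpha_j,\Omega}f_j(x)$, and Hölder with exponents $\frac1{q_j}=\frac1{p_j}-\frac{\alpha_j}n$ (which sum to $1/q$) together with Lemma \ref{l:3.1}(i) for each $j$ finishes the proof. The only remaining care is to confirm $1<q<\infty$ and $q_j>1$ in each regime, which follows from the stated hypotheses on $p_j$, $\alpha$, and $q$; I would also note that in case (ii) the same uneven split works and indeed is cleaner than the even one when some $p_j$ is close to $n/\alpha$.
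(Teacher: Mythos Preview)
Your proposal is correct and matches the second of the two approaches the paper sketches in the remark immediately following the lemma: the paper notes that $\mathfrak{M}_{\alpha,\Omega}(\vec f)$ is pointwise controlled by $\prod_j M_{\alpha_j,\Omega}f_j$ for any split $\alpha=\sum_j\alpha_j$, and then invokes Lemma~\ref{l:3.1}(i). Your write-up fleshes this out, including the key observation that $q<\infty$ forces $\alpha<\sum_j n/p_j$, which is exactly what makes the uneven split $0\le\alpha_j<n/p_j$ possible; the paper also mentions an alternative route via the global operator $\mathfrak{M}_\alpha$ and the pointwise bound $\mathfrak{M}_{\alpha,\Omega}(\vec f)(x)\le\mathfrak{M}_\alpha(\vec f\chi_\Omega)(x)$, which you do not use but which is equally short.
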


\begin{remark}
Lemma \ref{l:3.2} follows easily from the norm estimate for $\mathfrak{M}_\alpha$ and the pointwise estimate $\mathfrak{M}_{\alpha,\Omega}(\vec{f})(x)\leq\mathfrak{M}_\alpha(\vec{f}\chi_{\Omega})(x)$ for every $x\in\Omega$. It also follows easily form (i) of Lemma \ref{l:3.1} and the fact that  $\mathfrak{M}_{\alpha,\Omega}(\vec{f})$ can be controlled by the products of $M_{\alpha_j,\Omega}f_j$ pointwisely for $1\le j\le m$ and $\alpha=\sum_{j=1}^m\alpha_j$.
\end{remark}

The following proposition will play a key role in the proofs of Theorems \ref{t:alpha=0pointwise}-\ref{t:boundedness}.

\begin{proposition}[\cite{HKKT,KL}]\label{p:3.3}
Let $1\leq p\leq\infty$.  If $f_k\rightarrow f$, $g_k\rightarrow g$ weakly in $L^p(\Omega)$
and $f_k\leq g_k$ $(k=1,2,\ldots)$ almost everywhere in $\Omega$,
then $f\leq g$ almost everywhere in $\Omega$.
\end{proposition}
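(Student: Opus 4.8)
The plan is to argue by contradiction, exploiting the duality between weak convergence and integration against $L^{p'}$ functions (with the endpoint $p=1$, $p=\infty$ handled by a routine modification). Suppose, for contradiction, that the set $A=\{x\in\Omega:f(x)>g(x)\}$ has positive measure. I would first reduce to a set of finite measure: write $A=\bigcup_N A\cap B(0,N)\cap\{g>-N\}\cap\{f-g>1/N\}$, so that for some $N$ the set
$$E=\{x\in\Omega: x\in B(0,N),\ f(x)-g(x)>1/N\}$$
has $0<|E|<\infty$. The point of this truncation is that $\chi_E$ then lies in every $L^{p'}(\Omega)$, so it is a legitimate test function against which to pair the weak limits.

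The key step is then to test the weak convergence against $\chi_E$. Since $f_k\rightharpoonup f$ and $g_k\rightharpoonup g$ weakly in $L^p(\Omega)$, and $\chi_E\in L^{p'}(\Omega)$ (here $1\le p\le\infty$; for $p=1$ one uses $\chi_E\in L^\infty$, for $p=\infty$ one interprets weak convergence as weak-$*$ convergence, again pairing with $\chi_E\in L^1$ since $|E|<\infty$), we get
$$\int_E (f_k-g_k)\,dx\ \longrightarrow\ \int_E (f-g)\,dx.$$
On the other hand, $f_k\le g_k$ a.e. gives $\int_E(f_k-g_k)\,dx\le 0$ for every $k$, hence $\int_E(f-g)\,dx\le 0$. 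But on $E$ we have $f-g>1/N>0$, so $\int_E(f-g)\,dx\ge |E|/N>0$, a contradiction. Therefore $|A|=0$, i.e. $f\le g$ a.e. in $\Omega$.

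There is no serious obstacle here; the proposition is essentially the statement that the closed convex cone $\{h\in L^p:h\le 0 \text{ a.e.}\}$ is weakly (weak-$*$) closed, which follows from Mazur's lemma or directly from the computation above. The only mild care needed is the reduction to a finite-measure subset so that $\chi_E$ is an admissible test function in all the ranges $1\le p\le\infty$ — in particular for $p=1$ one cannot pair against $\chi_\Omega$ when $|\Omega|=\infty$, which is exactly why the truncation is made first. Since this is a cited preliminary (\cite{HKKT,KL}), I would keep the write-up to this short contradiction argument.
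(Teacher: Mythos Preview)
Your argument is correct and is the standard one: reduce to a bounded finite-measure set $E$ on which $f-g>1/N$, test the weak convergence of $g_k-f_k\ge 0$ against $\chi_E\in L^{p'}$, and derive a contradiction. Note that the paper does not actually supply a proof of this proposition at all; it is simply quoted from \cite{HKKT,KL} as a preliminary fact, so there is no ``paper's own proof'' to compare against. Your write-up would serve perfectly well as the omitted verification, and your remark about interpreting the $p=\infty$ case as weak-$*$ convergence (so that $\chi_E\in L^1$ is an admissible test function) is the right way to handle that endpoint.
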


Let $\delta(x)={\rm dist}(x,\Omega^c)$. According to Rademacher's theorem, as a Lipschitz function $\delta$ is differentiable a.e. in $\Omega$.  Moreover, $|\nabla\delta(x)|=1$ for a.e. $x\in\Omega$. For $0\leq\alpha<\infty$ and $0<t<1$, we define the multisublinear fractional average operator $\mathcal{A}_t^\alpha$ by
$$\mathcal{A}_t^\alpha(\vec{f})(x)=\frac{(t\delta(x))^\alpha}{|B(x,t\delta(x))|^m}\prod\limits_{j=1}^m\int_{B(x,t\delta(x))}f_j(y)dy,$$
where $\vec{f}=(f_1,\ldots,f_m)$ with each $f_j\in L_{\rm loc}^1(\Omega)$. If $\alpha=0$, we denote by $\mathcal{A}_t^\alpha=\mathcal{A}_t$.

The following pointwise estimate of the gradient of $\mathcal{A}_t^\alpha$ will be needed.

\begin{lemma}\label{l:3.4}
Let $\alpha\ge 1, 1< p_1,...p_m,q<\infty$, $\frac{1}{q}=\frac{1}{p_1}+\cdots+
\frac{1}{p_m}-\frac{\alpha-1}{n}$ with
$0<t<1$. 
Let $\vec{f}=(f_1,
\ldots,f_m)$ with each $f_j\in W^{1,p_j}(\Omega)$. If $|\Omega|<\infty$,
then
$|\nabla\mathcal{A}_t^\alpha(\vec{f})|\in L^q(\Omega)$.
Moreover, for almost every $x\in\Omega$ and $\vec{f}^l=(f_1,
\ldots,f_{l-1},|\nabla f_l|,f_{l+1},\ldots,f_m)$, we have 
$$|\nabla\mathcal{A}_t^\alpha(\vec{f})(x)|\leq \alpha\mathfrak{M}_{\alpha-1,\Omega}(\vec{f})+2\sum\limits_{l=1}^m
\mathfrak{M}_{\alpha,\Omega}(\vec{f}^l)(x).\eqno(3.1)$$
\end{lemma}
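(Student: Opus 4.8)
The plan is to compute $\nabla \mathcal{A}_t^\alpha(\vec f)$ directly by the chain and product rules, exploiting the fact that $\mathcal{A}_t^\alpha(\vec f)(x)$ is an explicit expression built from the Lipschitz function $\delta(x) = \operatorname{dist}(x,\Omega^c)$ and from the averages $\fint_{B(x,t\delta(x))} f_j$. First I would change variables in each integral: writing $\int_{B(x,t\delta(x))} f_j(y)\,dy = (t\delta(x))^n \int_{B(0,1)} f_j(x + t\delta(x)z)\,dz$, the operator becomes
$$\mathcal{A}_t^\alpha(\vec f)(x) = (t\delta(x))^{\alpha} \, \omega_n^{-m} \prod_{j=1}^m \int_{B(0,1)} f_j(x+t\delta(x)z)\,dz,$$
where $\omega_n = |B(0,1)|$, so that the $n$-power of the volume cancels against the normalization. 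With each $f_j \in W^{1,p_j}(\Omega)$, the inner integrals are differentiable in $x$ for a.e. $x$ (this is standard; one justifies it by approximating $f_j$ by smooth functions and using that translation is continuous on $L^{p_j}$, together with the fact that $\delta$ is Lipschitz with $|\nabla\delta| = 1$ a.e. by Rademacher). Differentiating, there are two types of terms: the term where the power $\delta(x)^\alpha$ is differentiated, producing a factor $\alpha (t\delta(x))^{\alpha-1} t \nabla\delta(x)$, and the terms where one of the averages $\fint_{B(x,t\delta(x))} f_j$ is differentiated through its dependence on $x$ both as the center of the ball and through the radius $t\delta(x)$.

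The key estimates are then the following. The first term is bounded in absolute value by $\alpha (t\delta(x))^{\alpha-1} \prod_j \fint_{B(x,t\delta(x))} |f_j| \le \alpha\, \mathfrak{M}_{\alpha-1,\Omega}(\vec f)(x)$ since $t\delta(x) < \delta(x) = \operatorname{dist}(x,\Omega^c)$ and $t<1$. For the $l$-th "average" term, differentiating $\int_{B(0,1)} f_l(x+t\delta(x)z)\,dz$ brings down a gradient $\nabla f_l(x+t\delta(x)z)$ dotted with $\nabla_x(x+t\delta(x)z) = \mathrm{Id} + t(\nabla\delta(x))\otimes z$; changing variables back to $B(x,t\delta(x))$ and using $|z|\le 1$, $|\nabla\delta|=1$, $t<1$, this term is controlled by $2 (t\delta(x))^\alpha \fint_{B(x,t\delta(x))} |\nabla f_l(y)|\,dy \cdot \prod_{j\ne l} \fint_{B(x,t\delta(x))} |f_j(y)|\,dy \le 2\,\mathfrak{M}_{\alpha,\Omega}(\vec f^l)(x)$, where the factor $2$ absorbs the operator norm of $\mathrm{Id} + t(\nabla\delta)\otimes z$ (which is at most $1 + t|z| < 2$). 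Summing over $l$ and combining with the first term gives exactly (3.1). For the $L^q$ membership, once (3.1) is in hand one applies Lemma \ref{l:3.2}: $\mathfrak{M}_{\alpha-1,\Omega}(\vec f)$ and each $\mathfrak{M}_{\alpha,\Omega}(\vec f^l)$ are in $L^q(\Omega)$ under the stated index relations — using that $|\Omega|<\infty$ so that $W^{1,p_j}(\Omega)\hookrightarrow L^{p_j}(\Omega)$ and the Hölder-type index arithmetic $\frac1q = \sum_j \frac{1}{p_j} - \frac{\alpha-1}{n}$ lines up with the exponents appearing in Lemma \ref{l:3.2} (the $\mathfrak{M}_{\alpha,\Omega}(\vec f^l)$ term replaces one $f_j\in L^{p_j}$ by $|\nabla f_l|\in L^{p_l}$, keeping the same exponent budget but shifting $\alpha-1\to\alpha$, consistent with the $-\frac{\alpha}{n}$ versus $-\frac{\alpha-1}{n}$ bookkeeping since $\vec f^l$ has one fewer "function" that is merely integrated).

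The main obstacle I expect is the rigorous justification of differentiation under the integral sign when $x$ appears simultaneously in the integrand and in the domain (radius) of integration, for merely Sobolev — not smooth — functions $f_j$, and the verification that this holds for a.e. $x\in\Omega$ including points where $\delta$ fails to be differentiable (a null set, hence harmless) and points near $\partial\Omega$ where $\delta(x)\to 0$. The clean way to handle this is to first establish the pointwise gradient formula and bound for $f_j \in C^\infty(\Omega)\cap W^{1,p_j}(\Omega)$, then approximate: if $f_j^{(k)}\to f_j$ in $W^{1,p_j}(\Omega)$, then $\mathcal{A}_t^\alpha(\vec f^{(k)})\to \mathcal{A}_t^\alpha(\vec f)$ and $\nabla\mathcal{A}_t^\alpha(\vec f^{(k)})$ converges (in $L^q_{\mathrm{loc}}$, using $|\Omega|<\infty$ and the product structure with Hölder) to a function which must be the weak gradient of $\mathcal{A}_t^\alpha(\vec f)$; passing to the limit in the pointwise inequality (3.1) — valid termwise for the approximants — via Proposition \ref{p:3.3} (weak $L^q$ limits preserve a.e. inequalities) then yields (3.1) for $\vec f$ itself. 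One subtlety worth a remark is that the product $\prod_j \fint_{B(x,t\delta(x))}|f_j^{(k)}|$ converges: this follows from the $L^{p_j}\to L^q$ boundedness of the (local) fractional maximal averages and the generalized Hölder inequality, which is exactly the content invoked through Lemma \ref{l:3.2}.
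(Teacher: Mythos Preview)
Your proposal is correct and the overall architecture matches the paper: prove the pointwise bound first for smooth $f_j$, then pass to general $W^{1,p_j}$ by approximation, using Lemma~\ref{l:3.2} for the $L^q$ bounds and Proposition~\ref{p:3.3} to carry the a.e.\ inequality through weak limits.

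The genuine difference is in how you obtain the pointwise formula. The paper differentiates $\int_{B(x,t\delta(x))}f_l(y)\,dy$ with the domain depending on $x$, which produces a surface integral over $\partial B(x,t\delta(x))$; it then applies Green's first identity with $v(y)=\tfrac12|y-x|^2$ to convert that surface term back into the volume integral $\int_{B(x,t\delta(x))}\nabla f_l(y)\cdot(y-x)\,dy$, arriving at the explicit formula (3.7). Your change of variables $y=x+t\delta(x)z$ fixes the domain to $B(0,1)$ before differentiating, so the chain rule hits only the integrand and yields directly $\int_{B(0,1)}\nabla f_l(x+t\delta(x)z)(I+t\nabla\delta(x)\otimes z)\,dz$; changing back recovers exactly the same three terms as (3.7). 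Your route is shorter and avoids Green's identity altogether, at the cost of a change of variables that must itself be justified for Sobolev $f_l$ (which you correctly defer to the smooth case plus approximation). One small point: your sentence about the $L^q$ bookkeeping for $\mathfrak{M}_{\alpha,\Omega}(\vec f^{\,l})$ is garbled --- $\vec f^{\,l}$ still has $m$ entries, and Lemma~\ref{l:3.2} places $\mathfrak{M}_{\alpha,\Omega}(\vec f^{\,l})$ in $L^{q^*}$ with $\tfrac1{q^*}=\sum_j\tfrac1{p_j}-\tfrac{\alpha}{n}<\tfrac1q$; it is precisely the hypothesis $|\Omega|<\infty$ that gives the embedding $L^{q^*}(\Omega)\hookrightarrow L^q(\Omega)$, as the paper makes explicit.
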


\begin{proof} 
We first prove (3.1) for
each $f_j\in W^{1,p_j}
(\Omega)\cap\mathcal{C}^\infty(\Omega)$. Fix $1\leq i\leq n$. By the Leibnitz
rule, it yields that
$$\begin{array}{ll}
D_i\mathcal{A}_t^\alpha(\vec{f})(x)
&=\displaystyle D_i\Big(\frac{(t\delta(x))^\alpha}{\varpi_n^m(t\delta(x))^{mn}}\Big)\prod\limits_{j=1}^m\int_{B(x,t\delta(x))}f_j(y)dy\\&\quad+\displaystyle\frac{(t\delta(x))^\alpha}{\varpi_n^m(t\delta(x))^{mn}}\sum\limits_{l=1}^m
D_i\Big(\int_{B(x,t\delta(x))}f_l(y)dy\Big)\prod\limits_{1\leq j\neq l\leq m}\int_{B(x,t\delta(x))}f_l(y)dy.
\end{array}$$
Fix $1\leq l\leq m$. For almost every $x\in\Omega$, the chain rule gives that
$$D_i\Big(\int_{B(x,t\delta(x))}f_l(y)dy\Big)=\int_{B(x,t\delta(x))}D_i f_l(y)dy+tD_i\delta(x)\int_{\partial B(x,t\delta(x))}f_l(y)d\mathcal{H}^{n-1}(y),\eqno(3.3)$$
where we have used the
fact that
$\frac{\partial}{\partial r}\int_{B(x,r)}f_l(y)dy=\int_{\partial B(x,r)}f_l(y)d\mathcal{H}^{n-1}(y).$

For almost every $x\in\Omega$, combining (3.2) with (3.3) yields that
$$\begin{array}{ll}
&D_i\mathcal{A}_t^\alpha(\vec{f})(x)\\
&=\displaystyle (\alpha-mn)(t\delta(x))^\alpha\frac{D_i\delta(x)}{\delta(x)}
\frac{1}{|B(x,t\delta(x))|^m}\prod\limits_{j=1}^m\int_{B(x,t\delta(x))}f_j(y)dy\\
&\quad+\displaystyle\sum\limits_{l=1}^m\frac{(t\delta(x))^\alpha}{|B(x,t\delta(x))|^m}\Big(\int_{B(x,t\delta(x))}D_i f_l(y)dy\Big)\prod\limits_{1\leq j\neq l\leq m}\int_{B(x,t\delta(x))}f_j(y)dy\\
&\quad+\displaystyle n(t\delta(x))^\alpha\frac{D_i\delta(x)}{\delta(x)}
\sum\limits_{l=1}^m\frac{1}{|\partial B(x,t\delta(x))|}\int_{\partial B(x,t\delta(x))}f_l(y)d\mathcal{H}^{n-1}(y)\\
&\quad\times\displaystyle\prod\limits_{1\leq j\neq l\leq m}\frac{1}{|B(x,t\delta(x))|}\int_{B(x,t\delta(x))}f_j(y)dy
\end{array}\eqno(3.4)$$Therefore, it holds that 
$$\begin{array}{ll}
{}&D_i\mathcal{A}_t^\alpha(\vec{f})(x)\\
&=\displaystyle\alpha(t\delta(x))^\alpha\frac{D_i\delta(x)}{\delta(x)}
\frac{1}{|B(x,t\delta(x))|^m}\prod\limits_{j=1}^m\int_{B(x,t\delta(x))}f_j(y)dy\\
&\quad+\displaystyle n(t\delta(x))^\alpha\sum\limits_{l=1}^m\frac{D_i\delta(x)}{\delta(x)}
\frac{1}{|B(x,t\delta(x))|^{m-1}}\prod\limits_{1\leq j\neq l\leq m}\int_{B(x,t\delta(x))}f_j(y)dy\\&\quad\times\displaystyle\Big(\frac{1}{|\partial B(x,t\delta(x))|}\int_{\partial B(x,t\delta(x))}f_l(y)d\mathcal{H}^{n-1}(y)-\frac{1}{|B(x,t\delta(x))|}\int_{B(x,t\delta(x))}f_l(y)dy\Big)\\&
\quad+\displaystyle\sum\limits_{l=1}^m\frac{(t\delta(x))^\alpha}{|B(x,t\delta(x))|^m}\Big(\int_{B(x,t\delta(x))}D_i f_l(y)dy\Big)\prod\limits_{1\leq j\neq l\leq m}\int_{B(x,t\delta(x))}f_j(y)dy
\end{array}\eqno(3.5)$$
On the other hand, we
use Green's first identity
$$\int_{\partial B(x,r)}f_l(y)\frac{\partial v}{\partial\nu}(y)d\mathcal{H}^{n-1}(y)
=\int_{B(x,r)}(f_l(y)\triangle v(y)+\nabla f_l(y)\cdot\nabla v(y))dy,\eqno(3.6)$$
where $\nu(y)=\frac{y-x}{r}$ is the unit outer normal of
$B(x,r)$. Take $v(y)=\frac{|y-x|^2}{2}$ and $r=t\delta(x)$.
Then $\nabla v(y)=y-x$, $\triangle v(y)=n$ and
$\frac{\partial v}{\partial\nu}(y)=t\delta(x)$. By
(3.6), we get
$$\begin{array}{ll}
&\displaystyle\frac{1}{|\partial B(x,t\delta(x))|}\int_{\partial B(x,t\delta(x))}f_l(y)d\mathcal{H}^{n-1}(y)-\frac{1}{|B(x,t\delta(x))|}
\int_{B(x,t\delta(x))}f_l(y)dy\\
&=\displaystyle\frac{1}{n}\frac{1}{|B(x,t\delta(x))|}\int_{B(x,t\delta(x))}\nabla f_l(y)\cdot(y-x)dy.
\end{array}$$
This together with (3.5) implies that, for almost every $x\in\Omega$, it holds that
$$\begin{array}{ll}
{}&\nabla\mathcal{A}_t^\alpha(\vec{f})(x)=\displaystyle\alpha(t\delta(x))^\alpha\frac{\nabla\delta(x)}{\delta(x)}
\frac{1}{|B(x,t\delta(x))|^m}\prod\limits_{j=1}^m\int_{B(x,t\delta(x))}f_j(y)dy\\
&\quad+\displaystyle \sum\limits_{l=1}^m\frac{\nabla\delta(x)}{\delta(x)}\frac{(t\delta(x))^\alpha}{|B(x,t\delta(x))|^m}\int_{B(x,t\delta(x))}\nabla f_l(y)\cdot(y-x)dy\prod\limits_{1\leq j\neq l\leq m}\int_{B(x,t\delta(x))}f_j(y)dy\\
&\quad+\displaystyle\sum\limits_{l=1}^m\frac{(t\delta(x))^\alpha}{|B(x,t\delta(x))|^m}\Big(\int_{B(x,t\delta(x))}\nabla f_l(y)dy\Big)\prod\limits_{1\leq j\neq l\leq m}\int_{B(x,t\delta(x))}f_j(y)dy
\end{array}\eqno(3.7)$$ For $\vec{f}^l=(f_1,\ldots,
f_{l-1},|\nabla f_l|,f_{l+1},\ldots,f_m)$, (3.7) further gives that
$$\begin{array}{ll}
|\nabla\mathcal{A}_t^\alpha(\vec{f})(x)|
&\leq\displaystyle\alpha\frac{(t\delta(x))^{\alpha-1}}{|B(x,t\delta(x))|^m}\prod\limits_{j=1}^m\int_{B(x,t\delta(x))}f_j(y)dy\\
&\quad+\displaystyle2\sum\limits_{l=1}^m\frac{(t\delta(x))^\alpha}{|B(x,t\delta(x))|^m}\int_{B(x,t\delta(x))}|\nabla f_l(y)|dy\prod\limits_{1\leq j\neq l\leq m}\int_{B(x,t\delta(x))}f_j(y)dy\\
&\leq\displaystyle\alpha\mathfrak{M}_{\alpha-1,\Omega}(\vec{f})(x)
+2\sum\limits_{l=1}^m\mathfrak{M}_{\alpha,\Omega}(\vec{f}^l)(x),
\end{array}$$
This proves (3.1)
for $\vec{f}=(f_1,\ldots,f_m)$ with each $f_j\in W^{1,p_j}
(\Omega)\cap\mathcal{C}^\infty(\Omega)$.

Now we complete the rest of proof by an approximation
argument. Suppose that $f_j\in W^{1,p_j}(\Omega)$ with
$1<p_j<\infty$. Fix $1\leq j\leq m$, there is a sequence
$\{g_{k,j}\}_{k\in\mathbb{Z}}$ of functions in $W^{1,p_j}
(\Omega)\cap\mathcal{C}^\infty(\Omega)$ such that $g_{k,j}
\rightarrow f_j$ in $W^{1,p_j}(\Omega)$ as $k\rightarrow\infty$.
Let $\vec{g}^k=(g_{k,1},\ldots,g_{k,m})$. Fix $0<t<1$. For every $x\in\Omega$, one can
easily check that
$\mathcal{A}_t^\alpha(\vec{f})(x)=\lim\limits_{k\rightarrow\infty}\mathcal{A}_t^\alpha(\vec{g}^k)(x)$. By the proved case for the smooth
functions, we have 
$$|\nabla\mathcal{A}_t^\alpha(\vec{g}^k)(x)|\leq \alpha\mathfrak{M}_{\alpha-1,\Omega}(\vec{g}^k)(x)+2\sum\limits_{l=1}^m\mathfrak{M}_{\alpha,\Omega}(\vec{g}^{k,l})(x),\eqno(3.8)$$
where $\vec{g}^{k,l}=(g_{k,1},
\ldots,g_{k,l-1},|\nabla g_{k,l}|,g_{k,l+1},\ldots,g_{k,m})$.
Let $\frac{1}{q^{*}}=\frac{1}{p_1}+\cdots+
\frac{1}{p_m}-\frac{\alpha}{n}$. Obviously, $q<q^{*}$. By
(3.8), Lemma \ref{l:3.2} and the fact $|\Omega|<\infty$, one obtains that
$$\aligned
\|\nabla\mathcal{A}_t^\alpha(\vec{g}^k)\|_{L^q(\Omega)}
&\leq\displaystyle \alpha\|\mathfrak{M}_{\alpha-1,\Omega}(\vec{g}^k)\|_{L^q(\Omega)}
+2|\Omega|^{\frac{1}{q}-\frac{1}{q^{*}}}\sum\limits_{l=1}^m\|\mathfrak{M}_{\alpha,\Omega}(\vec{g}^{k,l})\|_{L^{q^{*}}(\Omega)}\\
&\leq\displaystyle C(m,n,\alpha,\Omega)\prod\limits_{j=1}^m\|g_{k,j}\|_{W^{1,p_j}(\Omega)}.
\endaligned$$
Thus $\{|\nabla\mathcal{A}_t^\alpha(\vec{g}^k)|\}_{k
\in\mathbb{Z}}$ is a bounded sequence in $L^q(\Omega)$
and has a weakly converging subsequence $\{|\nabla
\mathcal{A}_t^\alpha(\vec{g}^{\imath_k})|\}_{k\in\mathbb{Z}}$.
Since $\mathcal{A}_t^\alpha(\vec{g}^k)\rightarrow
\mathcal{A}_t^\alpha(\vec{f})$ pointwise as $k\rightarrow\infty$,
we conclude that $|\nabla\mathcal{A}_t^\alpha(\vec{f})|$
exists and that $|\nabla\mathcal{A}_t^\alpha(\vec{g}^k)|
\rightarrow|\nabla\mathcal{A}_t^\alpha(\vec{f})|$ weakly
in $L^q(\Omega)$ as $k\rightarrow\infty$.
For all $1\leq l\leq m$, next, we shall prove that
$$\mathfrak{M}_{\alpha-1,\Omega}(\vec{g}^{k})\rightarrow\mathfrak{M}_{\alpha-1,\Omega}(\vec{f})\ \ {\rm in}\ L^q(\Omega)\ \ {\rm as}\ k\rightarrow\infty,\eqno(3.9)$$
$$\mathfrak{M}_{\alpha,\Omega}(\vec{g}^{k,l})\rightarrow\mathfrak{M}_{\alpha,\Omega}(\vec{f}^l)\ \ {\rm in}\ L^{q}(\Omega)\ \ {\rm as}\ k\rightarrow\infty.\eqno(3.10)$$ 
For every $x\in\Omega$,  let $\vec{F}_l^k=(g_{k,1},
\ldots,g_{k,l-1},|g_{k,l}-f_l|,f_{l+1},\ldots,f_m)$, we have
$$\begin{array}{ll}
&|\mathfrak{M}_{\alpha-1,\Omega}(\vec{g}^{k})(x)-\mathfrak{M}_{\alpha-1,\Omega}(\vec{f})(x)|\\
&\leq\displaystyle\sum\limits_{l=1}^m\sup\limits_{0<r<\delta(x)}\frac{r^{\alpha-1}}{|B(x,r)|^m}
\prod\limits_{\mu=1}^{l-1}\int_{B(x,r)}g_{k,\mu}(y)dy\int_{B(x,r)}|g_{k,l}(y)-f_l(y)|dy\\&\quad\times
\prod\limits_{\nu=l+1}^{m}\int_{B(x,r)}f_\nu(y)dy\leq\displaystyle\sum\limits_{l=1}^m\mathfrak{M}_{\alpha-1,\Omega}(\vec{F}_l^k)(x).
\end{array}\eqno(3.11)$$
(3.11) together with Lemma \ref{l:3.2} and Minkowski's
inequality implies
$$\|\mathfrak{M}_{\alpha-1,\Omega}(\vec{g}^{k})-\mathfrak{M}_{\alpha-1,\Omega}(\vec{f})\|_{L^q(\Omega)}
\leq C\sum\limits_{l=1}^{m}\prod\limits_{\mu=1}^{l-1}\|g_{k,\mu}\|_{L^{p_\mu}(\Omega)}
\|g_{k,l}-f_l\|_{L^{p_l}(\Omega)}\prod\limits_{\nu=l+1}^{m}\|f_\nu\|_{L^{p_\nu}(\Omega)},$$
which yields (3.9). 

Below we will prove (3.10). Without loss of
generality, we only prove (3.10) for $l=m$. Let $\vec{A}=(g_{k,1},\ldots,g_{k,m-1},|\nabla g_{k,m}|-|\nabla f_m|)$ and  $\vec{B}_\nu=(g_{k,1},\ldots,g_{k,\nu-1},|g_{k,\nu}-f_\nu|,f_{\nu+1},\ldots,f_{m-1},|\nabla f_m|)$. Then, the similar argument as in (3.11)
yields that
$$|\mathfrak{M}_{\alpha,\Omega}(\vec{g}^{k,m})(x)-\mathfrak{M}_{\alpha,\Omega}(\vec{f}^m)(x)|
\leq\mathfrak{M}_{\alpha,\Omega}(\vec{A})(x)+\sum\limits_{\nu=1}^{m-1}\mathfrak{M}_{\alpha,\Omega}(\vec{B}_\nu)(x).\eqno(3.12)$$
Using (3.12), Lemma \ref{l:3.2} and the Minkowski inequality, we get
$$\aligned 
{}&\|\mathfrak{M}_{\alpha,\Omega}(\vec{g}^{k,m})-\mathfrak{M}_{\alpha,\Omega}(\vec{f}^m)\|_{L^q(\Omega)}\\
&\leq\displaystyle C\prod\limits_{j=1}^{m-1}\|g_{k,j}\|_{L^{p_j}(\Omega)}\||\nabla g_{k,m}|-|\nabla f_m|\|_{L^{p_m}(\Omega)}\\
&\quad+\displaystyle\sum\limits_{\nu=1}^{m-1}\prod\limits_{i=1}^{\nu-1}\|g_{k,i}\|_{L^{p_i}(\Omega)}
\|g_{k,\nu}-f_\nu\|_{L^{p_\nu}(\Omega)}\prod\limits_{j=\nu+1}^{m-1}\|f_j\|_{L^{p_j}(\Omega)}\|\nabla f_m\|_{L^{p_m}(\Omega)},
\endaligned
$$
which gives (3.10) for $l=m$. 

It follows from (3.9) and
(3.10) that
$$\alpha\mathfrak{M}_{\alpha-1,\Omega}(\vec{g}^k)+2\sum_{l=1}^m\mathfrak{M}_{\alpha,\Omega}(\vec{g}^{k,l})\rightarrow\alpha\mathfrak{M}_{\alpha-1,\Omega}(\vec{f})
+2\sum_{l=1}^m\mathfrak{M}_{\alpha,\Omega}(\vec{f}^{l})\ \ {\rm in}\ L^q(\Omega)\ \ {\rm as}\ k\rightarrow\infty.$$
Applying Proposition \ref{p:3.3} to (3.7) with
$a_k=|\nabla\mathcal{A}_t^\alpha(\vec{g}^k)|\ \ {\rm and}\ \ b_k=\alpha\mathfrak{M}_{\alpha-1,\Omega}(\vec{g}^k)+2\sum\limits_{l=1}^m\mathfrak{M}_{\alpha,\Omega}(\vec{g}^{k,l}),$
yields (3.1). Thus, we completed the proof of Lemma \ref{l:3.4}.
\end{proof}

\begin{lemma}\label{l:3.4'}
Let $\vec{f}=(f_1, \ldots,f_m)$ with each $f_j\in W^{1,p_j}(\Omega)$ and $1<p_1,...,p_m<n$. Each $f_j$ enjoys the Sobolev embedding property that $\|f_j\|_{L^{ \widetilde p_j}}\lesssim \|f_j\|_{W^{1,p_j}} .$ Let $1\le \alpha<mn$ and $\frac{1}{q}=\frac{1}{p_1}+\cdots+ \frac{1}{p_m}-\frac{\alpha+m-1}{n}$ with $1/m<q<\infty$ and $0<t<1$. Then
$|\nabla\mathcal{A}_t^\alpha(\vec{f})|\in L^q(\Omega)$.
Moreover, for almost every $x\in\Omega$ and $\vec{f}^l=(f_1,
\ldots,f_{l-1},|\nabla f_l|,f_{l+1},\ldots,f_m)$, it holds that
$$|\nabla\mathcal{A}_t^\alpha(\vec{f})(x)|\leq \alpha\mathfrak{M}_{\alpha-1,\Omega}(\vec{f})+2\sum\limits_{l=1}^m
\mathfrak{M}_{\alpha,\Omega}(\vec{f}^l)(x).\eqno(3.1')$$
\end{lemma}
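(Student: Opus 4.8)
The plan is to follow the template of the proof of Lemma~\ref{l:3.4}, replacing the role played there by the hypothesis $|\Omega|<\infty$ with the Sobolev embedding $W^{1,p_j}(\Omega)\hookrightarrow L^{\widetilde p_j}(\Omega)$, $\frac1{\widetilde p_j}=\frac1{p_j}-\frac1n$. First I would note that the identity (3.7) and the estimate (3.1$'$) for $\vec f=(f_1,\dots,f_m)$ with each $f_j\in W^{1,p_j}(\Omega)\cap\mathcal{C}^\infty(\Omega)$ are established by exactly the computation of Lemma~\ref{l:3.4} (Leibnitz rule, chain rule (3.3), Green's identity (3.6)): these are purely local manipulations that use nothing beyond smoothness of the $f_j$, so this part carries over verbatim.

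The new ingredient is an index count. Since $1<p_j<n$ and $\Omega$ admits a $p_j$-Sobolev embedding, each $f_j\in L^{\widetilde p_j}(\Omega)$, and one checks that
\[
\frac1q=\sum_{j=1}^m\frac1{\widetilde p_j}-\frac{\alpha-1}{n}=\sum_{1\le j\neq l\le m}\frac1{\widetilde p_j}+\frac1{p_l}-\frac{\alpha}{n}\qquad(1\le l\le m),
\]
so that, by Lemma~\ref{l:3.2} together with the Sobolev embedding (to pass from $L^{\widetilde p_j}$-norms to $W^{1,p_j}$-norms),
\[
\|\mathfrak{M}_{\alpha-1,\Omega}(\vec f)\|_{L^q(\Omega)}+\sum_{l=1}^m\|\mathfrak{M}_{\alpha,\Omega}(\vec f^l)\|_{L^q(\Omega)}\le C\prod_{j=1}^m\|f_j\|_{W^{1,p_j}(\Omega)};
\]
in particular the right-hand side of (3.1$'$) lies in $L^q(\Omega)$. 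When $q<1$ (which is permitted, since only $q>1/m$ is assumed) Lemma~\ref{l:3.2} cannot be quoted directly; instead one dominates $\mathfrak{M}_{\alpha,\Omega}$ (resp.\ $\mathfrak{M}_{\alpha-1,\Omega}$) pointwise by a product $\prod_j M_{\alpha_j,\Omega}$ with $\sum_j\alpha_j=\alpha$ (resp.\ $\alpha-1$) and $0<\alpha_j<n/s_j$ for the relevant exponents $s_j\in\{\widetilde p_j,p_l\}$ --- possible because the sum of the admissible upper bounds exceeds the target by $n/q>0$ --- applies Lemma~\ref{l:3.1}(i) to each factor, and concludes with the generalized H\"older inequality, which remains valid for exponents below $1$.

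Next I would run the approximation argument. Pick $g_{k,j}\in W^{1,p_j}(\Omega)\cap\mathcal{C}^\infty(\Omega)$ with $g_{k,j}\to f_j$ in $W^{1,p_j}(\Omega)$, and set $\vec g^k=(g_{k,1},\dots,g_{k,m})$. Because every ball $B(x,t\delta(x))$ has finite measure, $W^{1,p_j}$-convergence forces convergence of the averages of $g_{k,j}$, $\nabla g_{k,j}$ and $\nabla g_{k,j}\cdot(y-x)$ over that fixed ball; reading off (3.7) we therefore get, for a.e.\ $x$, $\mathcal{A}_t^\alpha(\vec g^k)(x)\to\mathcal{A}_t^\alpha(\vec f)(x)$ and $\nabla\mathcal{A}_t^\alpha(\vec g^k)(x)\to V(x)$, where $V(x)$ is the right-hand side of (3.7) with $f$ in place of $g^k$. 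On each compact $K\Subset\Omega$ these same averages are bounded uniformly in $k$ (since $\delta$ is bounded away from $0$ and $\infty$ on $K$ and $\|g_{k,j}\|_{W^{1,p_j}(\Omega)}$ is bounded), so dominated convergence upgrades both limits to $L_{\rm loc}^1(\Omega)$; integrating against test functions in $\mathcal{C}_0^\infty(\Omega)$ then identifies $V=\nabla\mathcal{A}_t^\alpha(\vec f)$ as a weak gradient. Finally, from the smooth case $|\nabla\mathcal{A}_t^\alpha(\vec g^k)|\le\alpha\mathfrak{M}_{\alpha-1,\Omega}(\vec g^k)+2\sum_l\mathfrak{M}_{\alpha,\Omega}(\vec g^{k,l})=:H_k$, and from the splitting estimates (3.11)--(3.12) together with the norm bounds of the preceding paragraph, Minkowski's inequality, and $\|g_{k,j}-f_j\|_{L^{\widetilde p_j}}\lesssim\|g_{k,j}-f_j\|_{W^{1,p_j}}\to0$, one gets $H_k\to H:=\alpha\mathfrak{M}_{\alpha-1,\Omega}(\vec f)+2\sum_l\mathfrak{M}_{\alpha,\Omega}(\vec f^l)$ in $L^q(\Omega)$; passing to a subsequence along which $H_k\to H$ a.e.\ gives $|\nabla\mathcal{A}_t^\alpha(\vec f)(x)|=\lim_k|\nabla\mathcal{A}_t^\alpha(\vec g^k)(x)|\le H(x)$ a.e., which is (3.1$'$), while $H\in L^q(\Omega)$ yields $|\nabla\mathcal{A}_t^\alpha(\vec f)|\in L^q(\Omega)$. (For $q\ge1$ one may alternatively extract a weak-$L^q$ limit of $\{|\nabla\mathcal{A}_t^\alpha(\vec g^k)|\}$ and invoke Proposition~\ref{p:3.3}, exactly as in Lemma~\ref{l:3.4}.)

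The main obstacle is the combination of infinite measure with a possibly sub-unit exponent $q$. One can no longer truncate $\Omega$ to a set of finite measure in order to lower the integrability index --- that is precisely what the Sobolev embedding hypothesis compensates for --- and when $q<1$ the space $L^q(\Omega)$ is not locally convex and has trivial dual, so the weak-compactness step (and with it the application of Proposition~\ref{p:3.3}) used in Lemma~\ref{l:3.4} is unavailable. The way around this is to read off \emph{pointwise} convergence of the gradients directly from the explicit formula (3.7), which is legitimate because each averaging ball has finite measure, to promote it to $L_{\rm loc}^1$ convergence via the uniform local bounds (thereby identifying the weak gradient with no functional analysis at all), and to use convergence in measure of $H_k\to H$ to supply the a.e.\ domination. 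The other point requiring care is purely bookkeeping: one must verify that the Sobolev-improved exponents $\widetilde p_j$ conspire with $\alpha$ and $m$ so that \emph{both} $\mathfrak{M}_{\alpha-1,\Omega}(\vec f)$ and each $\mathfrak{M}_{\alpha,\Omega}(\vec f^l)$ map into the single target space $L^q(\Omega)$ --- but this is forced by the definition $\frac1q=\sum_j\frac1{p_j}-\frac{\alpha+m-1}{n}$.
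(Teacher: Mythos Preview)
Your proposal is correct and follows the same overall template as the paper: establish (3.1$'$) for smooth $f_j$ via the identity (3.7), set up the index bookkeeping by passing from $p_j$ to $\widetilde p_j$ through the Sobolev embedding and factoring the multilinear maximal operators into products of linear $M_{\alpha_j,\Omega}$'s, then run an approximation argument with $g_{k,j}\in W^{1,p_j}(\Omega)\cap\mathcal C^\infty(\Omega)$.

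The genuine difference lies in the approximation step. The paper, having bounded $\|\nabla\mathcal A_t^\alpha(\vec g^k)\|_{L^q}$ uniformly, extracts a weakly convergent subsequence in $L^q(\Omega)$ and then invokes Proposition~\ref{p:3.3} to pass (3.1$'$) to the limit --- exactly as in Lemma~\ref{l:3.4}. That route tacitly uses reflexivity of $L^q$ and the hypothesis $p\ge 1$ in Proposition~\ref{p:3.3}, so it is only justified when $q>1$, whereas the statement allows $1/m<q<\infty$. You instead read off \emph{pointwise} convergence of $\nabla\mathcal A_t^\alpha(\vec g^k)$ directly from the explicit formula (3.7) (legitimate because the averaging balls $B(x,t\delta(x))$ are fixed and have finite measure), upgrade to $L^1_{\rm loc}$ convergence via uniform local bounds, identify the weak gradient by testing against $\mathcal C_0^\infty(\Omega)$, and obtain (3.1$'$) by passing to an a.e.-convergent subsequence of $H_k$. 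This bypasses weak compactness entirely and covers the full range $1/m<q<\infty$; in particular it closes a gap that the paper's argument leaves open for $q\le 1$. For $q>1$ the two approaches are equivalent, and you correctly note that one may then alternatively proceed via weak-$L^q$ limits and Proposition~\ref{p:3.3}.
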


\begin{proof}
Similarly as in Lemma \ref{l:3.4}, we know that for
$\vec{f}=(f_1,\ldots,f_m)$ with each $f_j\in W^{1,p_j}
(\Omega)\cap\mathcal{C}^\infty(\Omega)$. For almost every $x\in\Omega$, it holds that 
$$
|\nabla\mathcal{A}_t^\alpha(\vec{f})(x)|
\leq\displaystyle\alpha\mathfrak{M}_{\alpha-1,\Omega}(\vec{f})(x)
+2\sum\limits_{l=1}^m\mathfrak{M}_{\alpha,\Omega}(\vec{f}^l)(x).
$$
Now, we may choose $1<q_1,...,q_m<\infty$ and $\alpha_1,...,\alpha_m\ge 0$, such that $\frac 1q=\frac 1{q_1}+\cdots+\frac 1{q_m}$ and $\alpha-1=\alpha_1+\cdots+\alpha_m$ with $\frac 1{q_i}=\frac 1{p_i}-\frac {1+\alpha_i}{n}.$
Define $1<\widetilde{p_1},...,\widetilde{p_m}<\infty$ via the equation $\frac 1{\widetilde p_i}=\frac 1{p_i}-\frac 1n, $ which implies that $\frac 1{q_i}=\frac 1{\widetilde p_i}-\frac {\alpha_i}n.$
Then, for $\vec{f}=(f_1,\ldots,f_m)$ with each $f_j\in W^{1,p_j}
(\Omega)\cap\mathcal{C}^\infty(\Omega)$, we have 
$$\aligned
\|\nabla\mathcal{A}_t^\alpha(\vec{f})\|_{L^q(\Omega)}
&\lesssim\prod_{i=1}^m\|{M}_{\alpha_i,\Omega}({f_i})\|_{L^{q_i}(\Omega)}
+\sum\limits_{l=1}^m\|{M}_{\alpha_l+1}({\nabla f}_l)\|_{L^{q_l}(\Omega)}\prod_{i\neq l}^m\|{M}_{\alpha_i,\Omega}({f_i})\|_{L^{q_i}(\Omega)}\\&\lesssim\prod_{i=1}^m\|{f_i}\|_{L^{\widetilde p_i}(\Omega)}
+\sum\limits_{l=1}^m\|\nabla f_l\|_{L^{p_l}(\Omega)}\prod_{i\neq l}^m\|f_i\|_{L^{\widetilde p_i}(\Omega)}
\endaligned$$
By definition, it holds that $\|\nabla f^l\|_{L^{p_l}(\Omega)}\le \|f^l\|_{W^{1,p_l}(\Omega)}$. For $\frac 1{\widetilde p_i}=\frac 1{p_i}-\frac 1n, $ the Sobolev embedding gives that $\|{f_i}\|_{L^{\widetilde p_i }(\Omega)}\lesssim \|f_i\|_{W^{1,p_i}(\Omega)}$. Then, one may obtain that 
$$\aligned
\|\nabla\mathcal{A}_t^\alpha(\vec{f})\|_{L^q(\Omega)}
&\lesssim
\prod_{i=1}^m\|{f_i}\|_{W^{1,p_i}(\Omega)}.
\endaligned$$
For each $f_i\in W^{1,p_i}(\Omega)$, there exists a sequence $f_{ik}\in C^\infty(\Omega)$, with $f_{ik} \xrightarrow{k} f_i$ in ${W^{1,p_i}}(\Omega)$, denote $\vec {g}^k=(f_{1k},f_{2k},...,f_{mk})$, then Thus $\{|\nabla\mathcal{A}_t^\alpha(\vec{g}^k)|\}_{k
\in\mathbb{Z}}$ is a bounded sequence in $L^q(\Omega)$
and has a weakly converging subsequence $\{|\nabla
\mathcal{A}_t^\alpha(\vec{g}^{\imath_k})|\}_{k\in\mathbb{Z}}$.
Since $\mathcal{A}_t^\alpha(\vec{g}^k)\rightarrow
\mathcal{A}_t^\alpha(\vec{f})$ pointwise as $k\rightarrow\infty$,
we conclude that $|\nabla\mathcal{A}_t^\alpha(\vec{f})|$
exists and that $|\nabla\mathcal{A}_t^\alpha(\vec{g}^k)|
\rightarrow|\nabla\mathcal{A}_t^\alpha(\vec{f})|$ weakly
in $L^q(\Omega)$ as $k\rightarrow\infty$.

By ({3.11}) and Sobolev embedding again, we have $$\aligned{}&\|\mathfrak{M}_{\alpha-1,\Omega}(\vec{g}^{k})-\mathfrak{M}_{\alpha-1,\Omega}(\vec{f})\|_{L^q(\Omega)}\leq\displaystyle\sum\limits_{l=1}^m\mathfrak{M}_{\alpha-1,\Omega}(\vec{F}_l^k)(x)\\&
\leq C\sum\limits_{l=1}^{m}\prod\limits_{\mu=1}^{l-1}\|M_{\alpha_{\mu},\Omega}g_{k,\mu}\|_{L^{q_\mu}(\Omega)}
\|M_{{\alpha_l}, \Omega}(g_{k,l}-f_l)\|_{L^{q_l}(\Omega)}\prod\limits_{\nu=l+1}^{m}\|M_{{\alpha_\nu}, \Omega}f_\nu\|_{L^{q_\nu}(\Omega)}\\&
\leq C\sum\limits_{l=1}^{m}\prod\limits_{\mu=1}^{l-1}\|g_{k,\mu}\|_{L^{\widetilde{p_{\mu}}}(\Omega)} \|g_{k,l}-f_l\|_{L^{\widetilde{p_l}}(\Omega)} \prod\limits_{\nu=l+1}^{m}\|f_\nu\|_{L^{\widetilde{p_\nu}}(\Omega)}\\&\leq
 C\sum\limits_{l=1}^{m}\prod\limits_{\mu=1}^{l-1}\|g_{k,\mu}\|_{W^{1, {p_{\mu}}}(\Omega)} \|g_{k,l}-f_l\|_{W^{1,{p_l}}(\Omega)} \prod\limits_{\nu=l+1}^{m}\|f_\nu\|_{W^{1,{p_\nu}}(\Omega)}\xrightarrow{k\rightarrow \infty} 0.
\endaligned
$$
Hence, $\mathfrak{M}_{\alpha-1,\Omega}(\vec{g}^{k})(x)\xrightarrow{k\rightarrow \infty}\mathfrak{M}_{\alpha-1,\Omega}(\vec{f})(x) \hbox{ in\ } L^q(\Omega).$

We only prove (3.10) for $l=m$, since other cases
are analogous. 
$$\aligned
{}&\|\mathfrak{M}_{\alpha,\Omega}(\vec{g}^{k,m})-\mathfrak{M}_{\alpha,\Omega}(\vec{f}^m)\|_{L^q(\Omega)}\\&\leq\displaystyle\|\mathfrak{M}_{\alpha,\Omega}(\vec{A})\|_{L^{q}(\Omega)}
+\sum\limits_{\nu=1}^{m-1}\|\mathfrak{M}_{\alpha,\Omega}(\vec{B}_\nu)\|_{L^{q}(\Omega)}\\
&\leq\displaystyle C\prod\limits_{j=1}^{m-1}\|g_{k,j}\|_{W^{1, p_j}}\||g_{k,m}-f_m|\|_{W^{1,p_m}}+\displaystyle\sum\limits_{\nu=1}^{m-1}\prod\limits_{i=1}^{\nu-1}\|g_{k,i}\|_{W^{1,p_i}}
\\&\quad\times \|g_{k,\nu}-f_\nu\|_{W^{1, p_\nu}}\prod\limits_{j=\nu+1}^{m-1}\|f_j\|_{W^{1,p_j}}\| f_m\|_{W^{1,p_m}}\xrightarrow{k\rightarrow \infty} 0.\endaligned
$$
\end{proof}

We need the estimate for the gradient of $\mathcal{A}_t^\alpha$ in the proof of
Theorem \ref{t:alpha>0pointwise2} as follows.

\begin{lemma}\label{l:3.5}
Let $\beta=\min_{1\leq j\leq m}
\Big\{({n-1})/{p_j},n-{2n}/{(n-1)p_j}\Big\}$,
$1\leq\alpha<m\beta+1$. Let $\bar{\alpha}={(\alpha-1)}/{m}$,
$\frac{1}{q}=\frac{1}{p_1}+\cdots+\frac{1}{p_m}-
\frac{\alpha-1}{n}$ with $1<q<\infty$ and $0<t<1$. Let $\vec{f}=(f_1,
\ldots,f_m)$ with each $f_j\in L^{p_j}(\Omega)$ and
$p_j>{n}/{(n-1)}$. Then
$|\nabla\mathcal{A}_t^\alpha(\vec{f})|\in L^q(\Omega)$.
Moreover, for almost every $x\in\Omega$, it holds that
$$|\nabla\mathcal{A}_t^\alpha(\vec{f})(x)|\leq C\Big(\mathfrak{M}_{\alpha-1,\Omega}(\vec{f})
+\sum\limits_{l=1}^m\mathcal{S}_{\bar{\alpha},\Omega}f_l(x)
\prod\limits_{1\leq j\neq l\leq m}M_{\bar{\alpha},\Omega}f_j(x)\Big).\eqno(3.13).$$
\end{lemma}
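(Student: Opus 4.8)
The plan is to follow the architecture of the proof of Lemma~\ref{l:3.4}: first derive the pointwise bound (3.13) under the extra hypothesis that each $f_j\in C^\infty(\Omega)$, and then remove this regularity by an approximation argument resting on Proposition~\ref{p:3.3}. The one genuinely new feature is that now $f_j$ lies only in $L^{p_j}(\Omega)$, so in computing $\nabla\mathcal{A}_t^\alpha(\vec f)$ we must never move a derivative onto $f_j$; every $x$-derivative of $\int_{B(x,t\delta(x))}f_j$ has to be kept as a surface integral over $\partial B(x,t\delta(x))$, which is ultimately responsible for the appearance of the spherical maximal operator $\mathcal S_{\bar\alpha,\Omega}$ in (3.13) rather than a term of the form $\mathfrak M_{\alpha,\Omega}(\vec f^l)$.

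For the smooth case, write $r=r(x)=t\delta(x)$ and $|B(x,r)|=\varpi_n r^n$, so that $\mathcal{A}_t^\alpha(\vec f)(x)=\varpi_n^{-m}r^{\alpha-mn}\prod_{j=1}^m\int_{B(x,r)}f_j$. Fixing $1\le i\le n$, and using the translation $y=x+z$ together with the divergence theorem on the ball $B(x,r)\Subset\Omega$ (outer unit normal $\nu(y)=(y-x)/r$) and the identity $\partial_r\int_{B(x,r)}f_j=\int_{\partial B(x,r)}f_j\,d\mathcal{H}^{n-1}$, one obtains
$$D_i\Big(\int_{B(x,r)}f_j(y)\,dy\Big)=\int_{\partial B(x,r)}f_j(y)\,\frac{y_i-x_i}{r}\,d\mathcal{H}^{n-1}(y)+tD_i\delta(x)\int_{\partial B(x,r)}f_j(y)\,d\mathcal{H}^{n-1}(y),$$
whose modulus is at most $2\int_{\partial B(x,r)}|f_j|\,d\mathcal{H}^{n-1}$ because $|y_i-x_i|\le r$ on $\partial B(x,r)$, $|\nabla\delta|=1$ a.e., and $0<t<1$. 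Applying the Leibniz rule to $\mathcal{A}_t^\alpha(\vec f)$, the term that differentiates $r^{\alpha-mn}$ is dominated by $|\alpha-mn|\,\frac{r^{\alpha-1}}{|B(x,r)|^m}\prod_j\int_{B(x,r)}|f_j|\le|\alpha-mn|\,\mathfrak M_{\alpha-1,\Omega}(\vec f)(x)$ (recall $r<\delta(x)$), while each of the $m$ terms that differentiates a factor $\int_{B(x,r)}f_l$ is dominated by a constant times
$$r^{\alpha-mn}\Big(\int_{\partial B(x,r)}|f_l|\,d\mathcal{H}^{n-1}\Big)\prod_{1\le j\ne l\le m}\int_{B(x,r)}|f_j|.$$
Rewriting the surface and solid integrals as averages multiplied by $r^{n-1}$, resp.\ $r^{n}$, reduces the prefactor to $r^{\alpha-1}$, and since $\alpha-1=m\bar\alpha$ we distribute one factor $r^{\bar\alpha}$ to each of the $m$ averaging factors; each is then bounded by the corresponding local maximal function, $r^{\bar\alpha}\cdot(\text{spherical average of }|f_l|)\le\mathcal S_{\bar\alpha,\Omega}f_l(x)$ and $r^{\bar\alpha}\cdot(\text{ball average of }|f_j|)\le M_{\bar\alpha,\Omega}f_j(x)$. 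Summing over $i$ and $l$ yields (3.13) for smooth $\vec f$.

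The approximation step is carried out just as in Lemma~\ref{l:3.4}. Choose $g_{k,j}\in C^\infty(\Omega)\cap L^{p_j}(\Omega)$ with $g_{k,j}\to f_j$ in $L^{p_j}(\Omega)$ and set $\vec g^k=(g_{k,1},\dots,g_{k,m})$; then $\mathcal{A}_t^\alpha(\vec g^k)(x)\to\mathcal{A}_t^\alpha(\vec f)(x)$ for every $x$. The right-hand side of (3.13) belongs to $L^q(\Omega)$ with norm $\lesssim\prod_j\|f_j\|_{L^{p_j}(\Omega)}$: the term $\mathfrak M_{\alpha-1,\Omega}(\vec f)$ is controlled by Lemma~\ref{l:3.2} (noting $0\le\alpha-1<mn$, since $\beta<n-1$), while for each $l$ the product $\mathcal S_{\bar\alpha,\Omega}f_l\prod_{j\ne l}M_{\bar\alpha,\Omega}f_j$ is estimated by H\"older's inequality with exponents $q_j$ given by $\frac1{q_j}=\frac1{p_j}-\frac{\bar\alpha}{n}$ (so $\sum_j\frac1{q_j}=\frac1q$) together with Lemma~\ref{l:3.1}; here the hypotheses $p_j>n/(n-1)$ and $\bar\alpha<\beta$ are exactly what make Lemma~\ref{l:3.1}(i)--(ii) applicable. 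Hence $\{|\nabla\mathcal{A}_t^\alpha(\vec g^k)|\}_k$ is bounded in $L^q(\Omega)$, so along a subsequence it converges weakly in $L^q(\Omega)$; combined with the pointwise convergence of $\mathcal{A}_t^\alpha(\vec g^k)$ this gives the existence of $|\nabla\mathcal{A}_t^\alpha(\vec f)|\in L^q(\Omega)$ and the weak convergence $|\nabla\mathcal{A}_t^\alpha(\vec g^{\imath_k})|\rightharpoonup|\nabla\mathcal{A}_t^\alpha(\vec f)|$. Finally one verifies that the right-hand side of (3.13) for $\vec g^k$ converges in $L^q(\Omega)$ to that for $\vec f$: the convergence $\mathfrak M_{\alpha-1,\Omega}(\vec g^k)\to\mathfrak M_{\alpha-1,\Omega}(\vec f)$ is the estimate (3.11)--(3.9) of Lemma~\ref{l:3.4} verbatim (with Lemma~\ref{l:3.2} in place of the $W^{1,p}$ bounds), while $\mathcal S_{\bar\alpha,\Omega}(g_{k,l})\prod_{j\ne l}M_{\bar\alpha,\Omega}(g_{k,j})\to\mathcal S_{\bar\alpha,\Omega}(f_l)\prod_{j\ne l}M_{\bar\alpha,\Omega}(f_j)$ follows by telescoping the product and using the sublinearity bounds $|\mathcal S_{\bar\alpha,\Omega}g-\mathcal S_{\bar\alpha,\Omega}f|\le\mathcal S_{\bar\alpha,\Omega}(g-f)$ and $|M_{\bar\alpha,\Omega}g-M_{\bar\alpha,\Omega}f|\le M_{\bar\alpha,\Omega}(g-f)$, followed again by H\"older and Lemma~\ref{l:3.1}, each summand carrying a factor $\|g_{k,l}-f_l\|_{L^{p_l}}$ or $\|g_{k,j}-f_j\|_{L^{p_j}}\to0$. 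Applying Proposition~\ref{p:3.3} to (3.13) written for $\vec g^k$, with $a_k=|\nabla\mathcal{A}_t^\alpha(\vec g^k)|$ and $b_k$ the right-hand side of (3.13) for $\vec g^k$, produces (3.13) for $\vec f$.

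The step I expect to be the main obstacle is the smooth-case differentiation: one must recognize that the center-derivative of $\int_{B(x,r)}f_j$ is the surface integral $\int_{\partial B(x,r)}f_j\nu_i\,d\mathcal{H}^{n-1}$ and has to be kept as such — crudely bounded by the spherical $L^1$-mass rather than converted into $\int_{B(x,r)}D_if_j$ as in Lemma~\ref{l:3.4} — and then match the surviving power $r^{\alpha-1}=r^{m\bar\alpha}$ against the boundedness ranges of $M_{\bar\alpha,\Omega}$ and $\mathcal S_{\bar\alpha,\Omega}$ in Lemma~\ref{l:3.1}, which is exactly what forces the restrictions $p_j>n/(n-1)$ and $\alpha<m\beta+1$. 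Everything after that is a routine adaptation of the weak-convergence bookkeeping in Lemma~\ref{l:3.4}.
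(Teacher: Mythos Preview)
Your proposal is correct and follows essentially the same route as the paper: the paper also first establishes (3.13) for $f_j\in L^{p_j}(\Omega)\cap C^\infty(\Omega)$ by recycling the Leibniz-rule expansion (3.4) from Lemma~\ref{l:3.4} and then applying Gauss's theorem $\int_{B}\nabla f_l=\int_{\partial B}f_l\,\nu\,d\mathcal H^{n-1}$ to turn the volume term into a surface term, arriving at exactly your bound; the only cosmetic difference is that you merge the center-derivative computation and the divergence theorem into a single formula for $D_i\!\int_{B(x,r)}f_j$, whereas the paper routes through (3.4) first. The approximation step---bounding $\|\nabla\mathcal A_t^\alpha(\vec g^k)\|_{L^q}$ via Lemmas~\ref{l:3.1}--\ref{l:3.2} and H\"older, extracting a weakly convergent subsequence, showing $L^q$-convergence of the right-hand side by telescoping the product $\mathcal S_{\bar\alpha,\Omega}g_{k,l}\prod_{j\ne l}M_{\bar\alpha,\Omega}g_{k,j}$ and using sublinearity, and then invoking Proposition~\ref{p:3.3}---is identical to the paper's.
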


\begin{proof}
We first prove (3.13) for $\vec{f}=(f_1,\ldots,f_m)$ with each $f_j\in L^{p_j}(\Omega)
\cap\mathcal{C}^\infty(\Omega).$ For almost every $x\in\Omega$, it
follows from (3.4) that
$$\begin{array}{ll}
&\quad\nabla\mathcal{A}_t^\alpha(\vec{f})(x)=\displaystyle (\alpha-mn)\frac{\nabla\delta(x)}{\delta(x)}\frac{(t\delta(x))^\alpha}{\varpi_n^m(t\delta(x))^{mn}}
\prod\limits_{j=1}^m\int_{B(x,t\delta(x))}f_j(y)dy\\
&\quad+\displaystyle\frac{(t\delta(x))^\alpha}{\varpi_n^m(t\delta(x))^{mn}}\sum\limits_{l=1}^m\Big(\int_{B(x,t\delta(x))}\nabla f_l(y)dy\Big)\prod\limits_{1\leq j\neq l\leq m}\int_{B(x,t\delta(x))}f_j(y)dy\\
&\quad+\displaystyle\frac{(t\delta(x))^\alpha}{\varpi_n^m(t\delta(x))^{mn}}\sum\limits_{l=1}^m\Big(t\nabla\delta(x)\int_{\partial B(x,t\delta(x))}f_l(y)d\mathcal{H}^{n-1}(y)\Big)\prod\limits_{1\leq j\neq l\leq m}\int_{B(x,t\delta(x))}f_j(y)dy.
\end{array}\eqno(3.14)$$Applying Gauss's theorem
we have
$$\int_{B(x,t\delta(x))}\nabla f_l(y)dy=\int_{\partial B(x,t\delta(x))}f_l(y)\nu(y)d\mathcal{H}^{n-1}(y),\eqno(3.15)$$
where $\nu(y)=\frac{y-x}{t\delta(x)}$ is the unit outer
normal of $B(x,t\delta(x))$. Let $\bar{\alpha}=(\alpha-1)/m$.
For almost every $x\in\Omega$,
it follows form (3.14) and (3.15) that
$$\aligned|\nabla\mathcal{A}_t^\alpha(\vec{f})(x)|
&\leq\displaystyle C(\alpha,m,n)\Big(\mathfrak{M}_{\alpha-1,\Omega}(\vec{f})
+\sum\limits_{l=1}^m\mathcal{S}_{\bar{\alpha},\Omega}(f_l)(x)
\prod\limits_{1\leq j\neq\leq m}M_{\bar{\alpha},\Omega}f_j(x)\Big),
\endaligned$$ which yields (3.13) for
$\vec{f}=(f_1,\ldots,f_m)$ with each $f_j\in L^{p_j}
(\Omega)\cap\mathcal{C}^\infty(\Omega)$.

The rest of proof follows form an approximation argument.
Suppose that $f_j\in L^{p_j}(\Omega)$ with $1<p_j<\infty$.
Fix $1\leq j\leq m$, there is a sequence
$\{g_{k,j}\}_{k\in\mathbb{Z}}$ of functions in $L^{p_j}
(\Omega)\cap\mathcal{C}^\infty(\Omega)$ such that
$g_{k,j}\rightarrow f_j$ in $L^{p_j}(\Omega)$ as
$k\rightarrow\infty$. Let $\vec{g}^k=(g_{k,1},\ldots,
g_{k,m})$. Fix $0<t<1$. For any $x\in\Omega$, one can easily check that
$\mathcal{A}_t^\alpha(\vec{f})(x)=\lim\limits_{k\rightarrow\infty}\mathcal{A}_t^\alpha(\vec{g}^k)(x).$
By the proved case for smooth functions, for almost every $x\in\Omega$,
we have
$$|\nabla\mathcal{A}_t^\alpha(\vec{g}^k)(x)|\leq C\Big(\mathfrak{M}_{\alpha-1,\Omega}(\vec{g}^k)
+\sum\limits_{l=1}^m\mathcal{S}_{\bar{\alpha},\Omega}g_{k,l}(x)
\prod\limits_{1\leq j\neq l\leq m}M_{\bar{\alpha},\Omega}g_{k,j}(x)\Big).\eqno(3.16)$$
Let $\frac{1}{q}=\frac{1}{q_1}
+\cdots+\frac{1}{q_m}$ with $q_j={np_j}/{(n-\bar{\alpha}p_j)}$.
By (3.19), Lemmas \ref{l:3.1} and \ref{l:3.2} and H\"{o}lder's inequality, it holds that
$$\begin{array}{ll}
\|\nabla\mathcal{A}_t^\alpha(\vec{g}^k)\|_{L^q(\Omega)}&\leq\displaystyle C\Big(\|\mathfrak{M}_{\alpha-1,\Omega}(\vec{g}^k)\|_{L^q(\Omega)}
+\sum\limits_{l=1}^m\|\mathcal{S}_{\bar{\alpha},\Omega}g_{k,l}\|_{L^{q_l}(\Omega)}
\prod\limits_{1\leq j\neq l\leq m}\|M_{\bar{\alpha},\Omega}g_{k,j}\|_{L^{q_j}(\Omega)}\Big)\\
&\leq\displaystyle C\prod\limits_{j=1}^m\|g_{k,j}\|_{L^{p_j}(\Omega)}.
\end{array}$$
Thus $\{|\nabla\mathcal{A}_t^\alpha(\vec{g}^k)|\}_{k\in
\mathbb{Z}}$ is a bounded sequence in $L^q(\Omega)$ and
has a weakly converging subsequence
$\{|\nabla\mathcal{A}_t^\alpha(\vec{g}^{\imath_k})|\}_{k\in\mathbb{Z}}$.
Since $\mathcal{A}_t^\alpha(\vec{g}^k)\rightarrow
\mathcal{A}_t^\alpha(\vec{f})$ pointwise, we conclude that
$|\nabla\mathcal{A}_t^\alpha(\vec{f})|$ exists and that
$|\nabla\mathcal{A}_t^\alpha(\vec{g}^k)|\rightarrow
|\nabla\mathcal{A}_t^\alpha(\vec{f})|$ weakly in $L^q(\Omega)$
as $k\rightarrow\infty$.

For convenience we define the families of operators
$\{\mathscr{H}_l\}_{l=1}^m$ by
$$\mathscr{H}_l(\vec{f})(x)=\mathcal{S}_{\bar{\alpha},\Omega}f_l(x)
\prod\limits_{1\leq j\neq l\leq m}M_{\bar{\alpha},\Omega}f_j(x).$$
Below we shall prove that
$$\mathscr{H}_l(\vec{g}^k)\rightarrow\mathscr{H}_l(\vec{f})\ \ {\rm in}\ L^{q}(\Omega)\ \ {\rm as}\ k\rightarrow\infty, \quad 1\leq l\leq m.\eqno(3.17)$$
Without loss of generality we only
prove (3.17) for $l=m$. By
the sublinearity of the fractional maximal operator and
spherical fractional maximal operator, we have
\begin{eqnarray*}
&&|\mathscr{H}_m(\vec{g}^k)(x)-\mathscr{H}_m(\vec{f})(x)|\leq\displaystyle\Big|\mathcal{S}_{\bar{\alpha},\Omega}g_{k,m}
\prod\limits_{\mu=1}^{m-1}M_{\bar{\alpha},\Omega}g_{k,\mu}-\mathcal{S}_{\bar{\alpha},\Omega}f_m
\prod\limits_{\mu=1}^{m-1}M_{\bar{\alpha},\Omega}f_\mu\Big|\\
&&\leq\displaystyle\prod\limits_{\mu=1}^{m-1}M_{\bar{\alpha},\Omega}f_\mu\mathcal{S}_{\bar{\alpha},\Omega}(g_{k,m}-f_m)
+\mathcal{S}_{\bar{\alpha},\Omega}g_{k,m}\sum\limits_{l=1}^{m-1}M_{\bar{\alpha},\Omega}(g_{k,l}-f_l)(x)\prod\limits_{\mu=1}^{l-1}M_{\bar{\alpha},\Omega}g_{k,\mu}(x)
\\&&\quad \times\prod\limits_{\nu=l+1}^{m-1}M_{\bar{\alpha},\Omega}f_\nu(x).
\end{eqnarray*}
This together with H\"{o}lder's inequality and Minkowski's
inequality yields that
\begin{eqnarray*}
&&\|\mathscr{H}_m(\vec{g}^k)(x)-\mathscr{H}_m(\vec{f})(x)\|_{L^q(\Omega)}\\
&&\leq\displaystyle\prod\limits_{\mu=1}^{m-1}\|\mathcal{M}_{\bar{\alpha},\Omega}f_\mu\|_{L^{q_\mu}(\Omega)}
\|\mathcal{S}_{\bar{\alpha},\Omega}(g_{k,m}-f_m)\|_{L^{q_m}(\Omega)}+\displaystyle\|\mathcal{S}_{\bar{\alpha},\Omega}g_{k,m}\|_{L^{q_m}(\Omega)}\\
&&\quad\times\sum\limits_{l=1}^{m-1}
\|M_{\bar{\alpha},\Omega}(g_{k,l}-f_l)\|_{L^{q_l}(\Omega)}
\prod\limits_{\mu=1}^{l-1}\|M_{\bar{\alpha},\Omega}g_{k,\mu}\|_{L^{q_\mu}(\Omega)}
\prod\limits_{\nu=l+1}^{m-1}\|M_{\bar{\alpha},\Omega}f_\nu\|_{L^{q_\nu}(\Omega)}\\
&&\leq\displaystyle\prod\limits_{\mu=1}^{m-1}\|f_\mu\|_{L^{p_\mu}(\Omega)}\|g_{k,m}-f_m\|_{L^{p_m}(\Omega)}+\displaystyle\|g_{k,m}\|_{L^{p_m}(\Omega)}\sum\limits_{l=1}^{m-1}
\|g_{k,l}-f_l\|_{L^{p_l}(\Omega)}\\
&&\quad\times\prod\limits_{\mu=1}^{l-1}\|g_{k,\mu}\|_{L^{p_\mu}(\Omega)}
\prod\limits_{\nu=l+1}^{m-1}\|f_\nu\|_{L^{p_\nu}(\Omega)},
\end{eqnarray*}
which gives (3.17) for $l=m$. 

On the other hand, by arguments
similar to those used in deriving (3.9), one obtains
$\mathfrak{M}_{\alpha-1,\Omega}(\vec{g}^{k})\rightarrow\mathfrak{M}_{\alpha-1,\Omega}(\vec{f})\  {\rm in}\ L^q(\Omega)\  {\rm as}\ k\rightarrow\infty.$
This together with (3.17) implies that
$$\mathfrak{M}_{\alpha-1,\Omega}(\vec{g}^k)+\sum\limits_{l=1}^m\mathscr{H}_l(\vec{g}^k)\rightarrow\mathfrak{M}_{\alpha-1,\Omega}(\vec{f})
+\sum\limits_{l=1}^m\mathscr{H}_l(\vec{f})\ \ {\rm in}\ L^q(\Omega)\ \ {\rm as}\ \ k\rightarrow\infty.$$
Applying Proposition \ref{p:3.3} to (3.16) with
$a_k=|\nabla\mathcal{A}_t^\alpha(\vec{g}^k)|\ \ {\rm and}\ \ b_k=\mathfrak{M}_{\alpha-1,\Omega}(\vec{g}^k)
+\sum\limits_{l=1}^m\mathscr{H}_l(\vec{g}^k),$
we get (3.13). This finishes the proof of Lemma \ref{l:3.5}. 
\end{proof}

The following lemma is needed in the proof of Theorem \ref{t:alpha=0pointwise}.

\begin{lemma}\label{l:3.6}
Let $1<p_1,...,p_m<\infty$, $\frac{1}{q}=\frac{1}{p_1}+\cdots+\frac{1}{p_m}$ with $1<q\leq\infty$. Let $\vec{f}=(f_1,\ldots,f_m)$ with $f_j\in W^{1,p_j}(\Omega)$. Then there exists a constant $C>0$ such that
$$|\nabla\mathcal{A}_t(\vec{f})(x)|\leq 2\sum\limits_{l=1}^m
\mathfrak{M}_{\Omega}(\vec{f}^l)(x), \ \ a.e. \ \ x\in\Omega. \eqno(3.18)$$
\end{lemma}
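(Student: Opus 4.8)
The plan is to follow the two–step scheme of the proof of Lemma \ref{l:3.4}: first establish (3.18) for vectors of smooth functions, then pass to the general case by an approximation/weak–compactness argument. The point that makes this easier than Lemma \ref{l:3.4} is that, since here $\alpha=0$, the exponent $q$ with $\frac1q=\frac1{p_1}+\cdots+\frac1{p_m}$ is exactly the one for which $\mathfrak{M}_\Omega$ maps $L^{p_1}(\Omega)\times\cdots\times L^{p_m}(\Omega)$ boundedly into $L^q(\Omega)$ by Lemma \ref{l:3.2}(i). Consequently no hypothesis on $|\Omega|$ is needed, and the weak–compactness step works on an arbitrary subdomain. (Note that under the stated hypotheses $q$ is automatically finite, so the borderline $q=\infty$ does not actually occur.)

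\emph{Smooth case.} Suppose first that each $f_j\in W^{1,p_j}(\Omega)\cap\mathcal{C}^\infty(\Omega)$. Specializing the computation that produced (3.7) to $\alpha=0$ — i.e. differentiating $\mathcal{A}_t(\vec{f})$ by the Leibniz rule, using the chain rule identity (3.3) for $D_i\big(\int_{B(x,t\delta(x))}f_l\big)$, and Green's identity (3.6) with $v(y)=|y-x|^2/2$ to rewrite the difference of the spherical and solid means — one obtains, for a.e. $x\in\Omega$,
\begin{align*}
\nabla\mathcal{A}_t(\vec{f})(x)&=\sum_{l=1}^m\frac{\nabla\delta(x)}{\delta(x)}\frac{1}{|B(x,t\delta(x))|^m}\Big(\int_{B(x,t\delta(x))}\nabla f_l(y)\cdot(y-x)\,dy\Big)\prod_{1\le j\neq l\le m}\int_{B(x,t\delta(x))}f_j(y)\,dy\\
&\quad+\sum_{l=1}^m\frac{1}{|B(x,t\delta(x))|^m}\Big(\int_{B(x,t\delta(x))}\nabla f_l(y)\,dy\Big)\prod_{1\le j\neq l\le m}\int_{B(x,t\delta(x))}f_j(y)\,dy,
\end{align*}
the $\alpha$–term in (3.7) having dropped out. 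Since $|\nabla\delta(x)|=1$ a.e., $|y-x|\le t\delta(x)$ on $B(x,t\delta(x))$, and $0<t<1$, the first sum is bounded in absolute value by $\sum_{l}\frac{1}{|B(x,t\delta(x))|^m}\int_{B(x,t\delta(x))}|\nabla f_l(y)|\,dy\prod_{j\neq l}\int_{B(x,t\delta(x))}|f_j(y)|\,dy$, and the second sum obeys the same bound trivially. Adding them and recognizing each summand as $\le\mathfrak{M}_\Omega(\vec{f}^l)(x)$ yields (3.18) for smooth $\vec{f}$.

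\emph{Approximation.} For general $f_j\in W^{1,p_j}(\Omega)$, choose $g_{k,j}\in W^{1,p_j}(\Omega)\cap\mathcal{C}^\infty(\Omega)$ with $g_{k,j}\to f_j$ in $W^{1,p_j}(\Omega)$, set $\vec{g}^k=(g_{k,1},\ldots,g_{k,m})$, and note that $\mathcal{A}_t(\vec{g}^k)(x)\to\mathcal{A}_t(\vec{f})(x)$ for every $x\in\Omega$. By the smooth case, Minkowski's inequality and Lemma \ref{l:3.2}(i), $\|\nabla\mathcal{A}_t(\vec{g}^k)\|_{L^q(\Omega)}\le 2\sum_l\|\mathfrak{M}_\Omega(\vec{g}^{k,l})\|_{L^q(\Omega)}\le C\prod_j\|g_{k,j}\|_{W^{1,p_j}(\Omega)}$, which is bounded in $k$. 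Passing to a subsequence, $|\nabla\mathcal{A}_t(\vec{g}^k)|$ converges weakly in $L^q(\Omega)$; combined with the pointwise convergence of $\mathcal{A}_t(\vec{g}^k)$ this shows that $|\nabla\mathcal{A}_t(\vec{f})|$ exists, lies in $L^q(\Omega)$, and is the weak limit. Arguing exactly as in the derivation of (3.10)–(3.11) with $\alpha=0$ — estimating $\mathfrak{M}_\Omega(\vec{g}^{k,l})-\mathfrak{M}_\Omega(\vec{f}^l)$ by finitely many multilinear maximal functions of ``error'' vectors and using Lemma \ref{l:3.2}(i) together with Minkowski's inequality — one gets $2\sum_l\mathfrak{M}_\Omega(\vec{g}^{k,l})\to 2\sum_l\mathfrak{M}_\Omega(\vec{f}^l)$ in $L^q(\Omega)$. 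Applying Proposition \ref{p:3.3} with $a_k=|\nabla\mathcal{A}_t(\vec{g}^k)|$ and $b_k=2\sum_l\mathfrak{M}_\Omega(\vec{g}^{k,l})$ then gives (3.18) in general.

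There is no substantial obstacle: the whole argument is a streamlined version of Lemma \ref{l:3.4}. The only two places that require (minor) attention are (a) collapsing both sums in the gradient formula into the single constant $2$, which is where $t<1$ and $|\nabla\delta|=1$ are used to kill the factor $t\delta(x)/\delta(x)$ coming from the bound $|y-x|\le t\delta(x)$; and (b) carrying out the approximation step without assuming $|\Omega|<\infty$, which is possible precisely because for $\alpha=0$ the relevant $L^q$ bound is the unshifted Sobolev product estimate supplied by Lemma \ref{l:3.2}(i), rather than the shifted one that forced the finite–measure hypothesis in Lemma \ref{l:3.4}.
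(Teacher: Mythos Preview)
Your proof is correct and follows essentially the same approach as the paper: the paper's own proof simply says that (3.18) in the smooth case follows from (3.7) with $\alpha=0$, and that the general case follows by an approximation argument as in Lemma~\ref{l:3.4}. You have spelled out both steps in more detail (including the useful remark on why the finite--measure hypothesis is not needed when $\alpha=0$), but the strategy is identical.
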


\begin{proof}
By (3.7) we may obtain that (3.18) holds 
for each $f_j\in W^{1,p_j}
(\Omega)\cap\mathcal{C}^\infty(\Omega)$.
Now an approximation argument similarly as in Lemma \ref{l:3.4} gives the desired result.
\end{proof}

\subsection{ Proofs of Theorems \ref{t:alpha=0pointwise}-\ref{t:boundedness}}

\begin{proof}[Proof of Theorem \ref{t:alpha=0pointwise}.]
Let each $f_j\in W^{1,p_j}(\Omega)$. Without loss of generality we may
assume that all $f_j\geq0$ since $|u|\in W^{1,p}(\Omega)$
if $u\in W^{1,p}(\Omega)$ for $1<p<\infty$. Let $t_k$,
$k=1,2,\ldots,$ be an enumeration of the rationals between
$0$ and $1$. Denote
$\mathfrak{M}_\Omega(\vec{f})(x)=\sup\limits_{k\geq1}\mathcal{A}_{t_k}(\vec{f})(x)$ for every $x\in\Omega.$ For $k\geq1$, we define the function
$g_k:\Omega\rightarrow[-\infty,\infty]$ by
$g_k(x)=\max\limits_{1\leq i\leq k}\mathcal{A}_{t_i}(\vec{f})(x).$
Invoking Lemma \ref{l:3.6} we see that $\mathcal{A}_{t_k}(\vec{f})\in W^{1,q}
(\Omega)$ and for almost every $x\in\Omega$, it holds
$$|\nabla\mathcal{A}_{t_k}(\vec{f})(x)|\leq 2\sum\limits_{l=1}^m
\mathfrak{M}_{\Omega}(\vec{f}^l)(x).\eqno(3.19)$$

Using (3.19) and
the fact that the maximum of two Sobolev functions belongs
to the Sobolev space (\cite[Lemma 7.6]{GT}), we see
that $\{g_k\}_{k\in\mathbb{Z}}$ is an increasing sequence
of functions in $W^{1,q}(\Omega)$ converging to
$\mathfrak{M}_\Omega(\vec{f})$ pointwise and for all $k\geq1$ and almost every $x\in\Omega$. 
$$|\nabla g_k(x)|\leq\max\limits_{1\leq i\leq k}|\nabla
\mathcal{A}_{t_i}(\vec{f})(x)|\leq 2\sum\limits_{l=1}^m
\mathfrak{M}_{\Omega}(\vec{f}^l)(x).\eqno(3.20)$$
On the
other hand, $g_k(x)\leq\mathfrak{M}_\Omega(\vec{f})(x)$
for all $k\geq1$ and every $x\in\Omega$. The rest of the
proof along the lines of the final part of the proof of Lemma \ref{l:3.4}. By (3.20), and using the boundedness of $\mathfrak{M}_\Omega$ and $\|\nabla f_i\|_{L^{p_i}}\le \|\nabla f_i\|_{w^{1, p_i}} $. We have 
$$\|g_k\|_{W^{1,q}(\Omega)}=\|g_k\|_{L^q(\Omega)}+\|\nabla g_k\|_{L^q(\Omega)}\leq C\prod\limits_{j=1}^m\|f_j\|_{W^{1,p_j}(\Omega)}.$$
Thus $\{g_k\}_{k\geq1}$ is a bounded sequence in $W^{1,q}
(\Omega)$ such that $g_k\rightarrow\mathfrak{M}_\Omega
(\vec{f})$ a.e. in $\Omega$ as $k\rightarrow
\infty$. A weak compactness argument shows that
$\mathfrak{M}_\Omega(\vec{f})\in W^{1,q}(\Omega)$,
$g_k\rightarrow\mathfrak{M}_\Omega(\vec{f})$ and
$\nabla g_k\rightarrow\nabla\mathfrak{M}_\Omega(\vec{f})$
weakly in $L^q(\Omega)$ as $k\rightarrow\infty$. We may
proceed to the weak limit in (3.20). The same
argument as in the end of the proof of Lemma \ref{l:3.4} yield the desired result. 
\end{proof}

\begin{proof}[Proof of Theorem \ref{t:alpha>0pointwise}.]

{\it Proof for (i)}:  Since each $f_j\in W^{1,p_j}(\Omega)$ for
$1<p_j<\infty$. Without loss of generality we may assume
that all $f_j\geq0$. Let $t_k$, $k=1,2,\ldots,$ be an
enumeration of the rationals between $0$ and $1$. We denote $\mathfrak{M}_{\alpha,\Omega}(\vec{f})(x)=\sup\limits_{k\geq1}\mathcal{A}_{t_k}(\vec{f})(x).$ Invoking Lemma \ref{l:3.4} gives that $|\nabla\mathcal{A}_{t_k}^\alpha(\vec{f})|\in L^q(\Omega)$
for all $k\geq1$ and for every $x\in\Omega$ and $\vec{f}^l=(f_1,\ldots,
f_{l-1},|\nabla f_l|,f_{l+1},\ldots,f_m)$, one may obtain
$$|\nabla\mathcal{A}_{t_k}^\alpha(\vec{f})(x)|\leq \alpha\mathfrak{M}_{\alpha-1,\Omega}(\vec{f})(x)+2\sum\limits_{l=1}^m
\mathfrak{M}_{\alpha,\Omega}(\vec{f}^l)(x),\eqno(3.21)$$

For $k\geq1$,
we define the function $g_k:\Omega\rightarrow[-\infty,\infty]$
by
$g_k(x)=\max\limits_{1\leq i\leq k}\mathcal{A}_{t_k}(\vec{f})(x).$
Obviously, $\{g_k\}_{k\geq1}$ is an increasing sequence
of functions converging to $\mathfrak{M}_{\alpha,\Omega}
(\vec{f})$ pointwisely. By (3.21), for all $k\geq1$ and almost every $x\in\Omega$, we have
$$|\nabla g_k(x)|\leq\max\limits_{1\leq i\leq k}|\nabla
\mathcal{A}_{t_i}(\vec{f})(x)|\leq \alpha\mathfrak{M}_{\alpha-1,\Omega}(\vec{f})(x)+2\sum\limits_{l=1}^m
\mathfrak{M}_{\alpha,\Omega}(\vec{f}^l)(x),\eqno(3.22)$$
Therefore, by (3.22) and using the same argument as
in the end of the proof of Lemma \ref{l:3.4}, we can obtain
Theorem \ref{t:alpha>0pointwise}. 

{\it Proof for (ii)}: In the proof of (i), using  Lemma \ref{l:3.4'} instead of Lemma \ref{l:3.4} and employing the same argument as in Lemma \ref{l:3.4'} may yield the desired conclusion.
\end{proof}

\begin{proof}[Proof of Theorem \ref{t:alpha>0pointwise2}.]
The proof
is analogous to the proofs of Theorem \ref{t:alpha>0pointwise} and Lemma \ref{l:3.4},
but using Lemma \ref{l:3.5} instead of Lemma \ref{l:3.4}. We omit the
the proof of it. 
\end{proof}
\begin{proof}[Proof of Theorem \ref{BD1}.]
By Lemma \ref{l:3.2} and Theorem \ref{t:alpha=0pointwise} and Minkowski's
inequality, one obtains that
$$\|\mathfrak{M}_\Omega(\vec{f})\|_{W^{1,q}(\Omega)}\leq\|\mathfrak{M}_\Omega(\vec{f})\|_{L^q(\Omega)}+2\sum\limits_{l=1}^m\|\mathfrak{M}_\Omega(\vec{f}^l)\|_{L^q(\Omega)}\leq C\prod\limits_{j=1}^m\|f_j\|_{W^{1,p_j}(\Omega)}.$$
\end{proof}

\begin{proof}[Proof of Theorem \ref{t:boundedness}.]
 Let $\frac{1}{q^{*}}=\frac{1}{p_1}+\cdots+
\frac{1}{p_m}-\frac{\alpha}{n}$. Obviously, $q<q^{*}$.
By Lemma \ref{l:3.2} and Minkowski's inequality, one obtains that
$$\begin{array}{ll}
&\|\mathfrak{M}_{\alpha,\Omega}(\vec{f})\|_{W^{1,q}(\Omega)}\\
&\leq\displaystyle\|\mathfrak{M}_{\alpha,\Omega}(\vec{f})\|_{L^q(\Omega)}
+\alpha\|\mathfrak{M}_{\alpha-1,\Omega}(\vec{f})\|_{L^q(\Omega)}
+2\sum\limits_{l=1}^m\|\mathfrak{M}_{\alpha,\Omega}(\vec{f}^l)\|_{L^q(\Omega)}\\
&\leq\displaystyle|\Omega|^{\frac{1}{q}-\frac{1}{q^{*}}}\|\mathfrak{M}_{\alpha,\Omega}(\vec{f})\|_{L^{q^{*}}(\Omega)}
+\alpha\|\mathfrak{M}_{\alpha-1,\Omega}(\vec{f})\|_{L^q(\Omega)}
+2|\Omega|^{\frac{1}{q}-\frac{1}{q^{*}}}\sum\limits_{l=1}^m\|\mathfrak{M}_{\alpha,\Omega}(\vec{f}^l)\|_{L^{q^{*}}(\Omega)}\\
&\leq\displaystyle C(m,n,\alpha,\Omega)\prod\limits_{j=1}^m\|f_j\|_{W^{1,p_j}(\Omega)}.
\end{array}$$
This yields (i). While (1$'$) is a consequence of Theorem \ref{t:alpha>0pointwise} (ii). 

Let $\frac{1}{q}=\frac{1}{q_1}+\cdots+
\frac{1}{q_m}$ with $q_j={np_j}/{(n-\bar{\alpha}p_j)}$.
By Lemmas \ref{l:3.1}-\ref{l:3.2} and Theorem \ref{t:alpha>0pointwise2}, H\"{o}lder's inequality,
Minkowski's inequality and the fact $|\Omega|<\infty$, we have
$$\begin{array}{ll}
&\|\mathfrak{M}_{\alpha,\Omega}(\vec{f})\|_{W^{1,q}(\Omega)}\\
&\leq\displaystyle\|\mathfrak{M}_{\alpha,\Omega}(\vec{f})\|_{L^q(\Omega)}+C\|\mathfrak{M}_{\alpha-1,\Omega}(\vec{f})\|_{L^q(\Omega)}+
C\sum\limits_{l=1}^m\Big\|\mathcal{S}_{\bar{\alpha},\Omega}f_l
\prod\limits_{1\leq j\neq l\leq m}M_{\bar{\alpha},\Omega}f_j\Big\|_{L^q(\Omega)}\\
&\leq\displaystyle|\Omega|^{\frac{1}{q}-\frac{1}{q^{*}}}\|\mathfrak{M}_{\alpha,\Omega}(\vec{f})\|_{L^{q^{*}}(\Omega)}
+C\|\mathfrak{M}_{\alpha-1,\Omega}(\vec{f})\|_{L^q(\Omega)}\\&\quad+
C\sum\limits_{l=1}^m\|\mathcal{S}_{\bar{\alpha},\Omega}f_l\|_{L^{q_l}(\Omega)}
\prod\limits_{1\leq j\neq l\leq m}\|M_{\bar{\alpha},\Omega}f_j\|_{L^{q_j}(\Omega)}\\
&\leq\displaystyle C(m,n,\alpha,\Omega)\prod\limits_{j=1}^m\|f_j\|_{L^{p_j}(\Omega)}.
\end{array}$$
This proves (ii). 
\end{proof}

\section{Proofs of Theorems \ref{t:Sobolev0}-\ref{t:Sobolev0'}}

We need the following property of the Sobolev space with zero
boundary values.

\begin{lemma}[\cite{KM}]\label{l:4.1}
Let $\Omega\subset\mathbb{R}^n$, $\Omega\neq\mathbb{R}^n$, be an open set. Let $f\in W^{1,p}(\Omega)$ for $1<p<\infty$ and $\int_{\Omega}\Big({f(x)}/{{\rm dist}(x,\Omega^c)}\Big)^pdx<\infty.$  Then $f\in W_0^{1,p}(\Omega)$.
\end{lemma}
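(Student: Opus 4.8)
The plan is to show directly that $f$ lies in the $W^{1,p}(\Omega)$-closure of $C_0^\infty(\Omega)$, which is the definition of $W_0^{1,p}(\Omega)$. The approximation proceeds in three stages: first cut $f$ off near $\Omega^c$ using a cutoff built from $\delta(x)=\dist(x,\Omega^c)$; then, in case $\Omega$ is unbounded, cut off near infinity; and finally mollify. Everything is routine except for controlling the error produced by differentiating the boundary cutoff, and that single estimate is exactly where the Hardy-type hypothesis $\int_\Omega(f/\delta)^p\,dx<\infty$ is used.

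For the boundary cutoff I would fix a smooth $\phi:[0,\infty)\to[0,1]$ with $\phi\equiv0$ on $[0,1]$ and $\phi\equiv1$ on $[2,\infty)$, and set $\eta_\eps(x)=\phi(\delta(x)/\eps)$ for $\eps>0$. Since $\delta$ is Lipschitz with $|\nabla\delta|=1$ a.e.\ in $\Omega$, the function $\eta_\eps$ is bounded and Lipschitz, with $\eta_\eps\equiv0$ on $\{\delta\le\eps\}$, $\eta_\eps\equiv1$ on $\{\delta\ge2\eps\}$, and $|\nabla\eta_\eps|\le\|\phi'\|_\infty/\eps$ supported in the annular set $A_\eps:=\{\eps<\delta<2\eps\}$; hence $\eta_\eps f\in W^{1,p}(\Omega)$ with $\nabla(\eta_\eps f)=\eta_\eps\nabla f+f\nabla\eta_\eps$. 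I would then estimate $\|\eta_\eps f-f\|_{W^{1,p}(\Omega)}$ termwise. The two "easy" pieces satisfy
$$\|\eta_\eps f-f\|_{L^p(\Omega)}^p+\|(\eta_\eps-1)\nabla f\|_{L^p(\Omega)}^p\le\int_{\{\delta<2\eps\}}\big(|f|^p+|\nabla f|^p\big)\,dx\xrightarrow{\ \eps\to0\ }0$$
by dominated convergence, since $\chi_{\{\delta<2\eps\}}\to0$ pointwise in $\Omega$ (every $x\in\Omega$ has $\delta(x)>0$) and the integrand is dominated by $|f|^p+|\nabla f|^p\in L^1(\Omega)$. The remaining piece is the crux: on $A_\eps$ one has $1/\eps<2/\delta$, so
$$\|f\nabla\eta_\eps\|_{L^p(\Omega)}^p\le\big(2\|\phi'\|_\infty\big)^p\int_{A_\eps}\Big(\frac{|f(x)|}{\delta(x)}\Big)^p\,dx\xrightarrow{\ \eps\to0\ }0,$$
again by dominated convergence, this time using the hypothesis to provide the majorant $(|f|/\delta)^p\in L^1(\Omega)$ together with $\chi_{A_\eps}\to0$ a.e.\ Thus $\eta_\eps f\to f$ in $W^{1,p}(\Omega)$, and each $\eta_\eps f$ vanishes in the neighborhood $\{\delta\le\eps\}$ of $\Omega^c$.

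It then remains to approximate a fixed $g=\eta_\eps f$ by functions in $C_0^\infty(\Omega)$. Taking a standard cutoff $\zeta_R\in C_0^\infty(\R^n)$ with $\zeta_R\equiv1$ on $B(0,R)$, $\supp\zeta_R\subset B(0,2R)$, and $|\nabla\zeta_R|\lesssim1/R$, one gets $\zeta_R g\to g$ in $W^{1,p}(\Omega)$ as $R\to\infty$ (because $g,\nabla g\in L^p(\Omega)$), while $\supp(\zeta_R g)\subset\{\delta\ge\eps\}\cap\overline{B(0,2R)}$ is now a compact subset of $\Omega$. Mollifying $\zeta_R g$ with $\rho_\sigma$ for $\sigma<\tfrac12\dist\big(\supp(\zeta_R g),\Omega^c\big)$ yields $C_0^\infty(\Omega)$ functions converging to $\zeta_R g$ in $W^{1,p}(\Omega)$ as $\sigma\to0$. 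Chaining the three limits gives $f\in W_0^{1,p}(\Omega)$. The only genuinely delicate step is the bound on $\|f\nabla\eta_\eps\|_{L^p}$: without the Hardy condition there is no reason this should go to zero, and the argument rests on trading $|\nabla\eta_\eps|$ for $1/\delta$ on the thin shells $A_\eps$ and then invoking dominated convergence.
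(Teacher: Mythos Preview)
Your proof is correct and is essentially the standard argument for this result. Note, however, that the paper does not actually give its own proof of this lemma: it is quoted verbatim from Kinnunen and Martio \cite{KM} and used as a black box in the proofs of Theorems \ref{t:Sobolev0} and \ref{t:Sobolev0'}. So there is no ``paper's proof'' to compare against; what you have written is (up to cosmetic details) the proof one finds in \cite{KM}, built on the same idea of a boundary cutoff $\eta_\eps=\phi(\delta/\eps)$ whose gradient is controlled by $C/\delta$ on the shell $\{\eps<\delta<2\eps\}$, with the Hardy hypothesis supplying the $L^1$ majorant needed for dominated convergence.
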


\begin{proof}[Proof of Theorem \ref{t:Sobolev0}.]
Let
$\vec{f}=(f_1,\ldots,f_m)$ with each $f_j\in W_0^{1,p_j}
(\Omega)$, $\frac{1}{q}=
\frac{1}{p_1}+\ldots+\frac{1}{p_m}$ with $1<q<\infty$.
Fix $1\leq j\leq m$, there is a sequence
$\{g_{k,j}\}_{k\in\mathbb{Z}}$ of functions in
$W^{1,p_j}(\Omega)\cap\mathcal{C}_0^\infty(\Omega)$ such
that $g_{k,j}\rightarrow f_j$ in $W^{1,p_j}(\Omega)$ as
$k\rightarrow\infty$. Let $\vec{g}^k=(g_{k,1},\ldots,
g_{k,m})$. It follows from Theorem \ref{t:boundedness} that
$\mathfrak{M}_\Omega(\vec{g}^k)\in W^{1,q}(\Omega)$. It
is easy to check that $\mathfrak{M}_\Omega(\vec{g}^k)(x)
=0$ whenever ${\rm dist}(x,\Omega^c)<\frac{1}{2}\min_{1\leq j
\leq m}{\rm dist}({\rm supp}(g_{k,j}),\Omega^c)$. Thus
$\{\mathfrak{M}_\Omega(\vec{g}^k)\}_{k\in\mathbb{Z}}$ is
a bounded sequence in $W_0^{1,q}(\Omega)$. By the arguments
similar to those used to derive (3.13), for every $x\in\Omega$, we have
$|\mathfrak{M}_{\Omega}(\vec{g}^{k})(x)-\mathfrak{M}_{\Omega}(\vec{f})(x)|\leq
\sum\limits_{l=1}^m\mathfrak{M}_{\Omega}(\vec{F}_l^k)(x),$ where $\vec{F}_l^k=(g_{k,1},\ldots,
g_{k,l-1},|g_{k,l}-f_l|,f_{l+1},\ldots,f_m)$. This gives
that
$$\aligned\|\mathfrak{M}_{\Omega}(\vec{g}^{k})-\mathfrak{M}_{\Omega}(\vec{f})\|_{L^q(\Omega)}\leq\sum\limits_{l=1}^m\|g_{k,l}-f_l\|_{L^{p_l}(\Omega)}
\prod\limits_{\mu=1}^{l-1}\|g_{k,\mu}\|_{L^{p_\mu}(\Omega)}\prod\limits_{\nu=l+1}^m\|f_\nu\|_{L^{p_\nu}(\Omega)}.\endaligned$$
Therefore, it implies that $\{\mathfrak{M}_\Omega(\vec{g}^k)\}_{k\in\mathbb{Z}}$
converges to $\mathfrak{M}_\Omega(\vec{f})$ in $L^q(\Omega)$.
A weak compactness argument shows that $\mathfrak{M}_\Omega
(\vec{f})\in W_0^{1,q}(\Omega)$.

(ii) Let each $f_j\in W^{1,p_j}
(\Omega)$. Fix $0<t<1$ and $1\leq l\leq m$.
By the arguments similar to those used in deriving
\cite[Theorem 3.12]{AK}, there exists $C=C(n)>0$ such that
$$\Big||f_l(x)|-\frac{1}{|B(x,t\delta(x))|}\int_{B(x,t\delta(x))}|f_l(y)|dy\Big|\leq Ct\delta(x)M_\Omega|\nabla f_l|(x).\eqno(4.1)$$
Let $t_k$, $k=1,2,\ldots,$ be an enumeration of the rationals
between $0$ and $1$. We still write
$\mathfrak{M}_\Omega(\vec{f})(x)=\sup\limits_{k\geq1}\mathcal{A}_{t_k}(\vec{f})(x)$
for every $x\in\Omega$. For $k\geq1$, define the function
$g_k:\Omega\rightarrow[-\infty,\infty]$ by
$g_k(x)=\max\limits_{1\leq i\leq k}\mathcal{A}_{t_i}(\vec{f})(x).$
Then
$\mathfrak{M}_\Omega(\vec{f})(x)=\lim\limits_{k\rightarrow\infty}g_k(x)$
for every $x\in\Omega$. For convenience, we set $G(\vec{f})(x)
=\prod_{j=1}^mf_j(x)$. Let $\frac{1}{q}=\frac{1}{p_1}+\ldots+
\frac{1}{p_m}$. One can easily check that $G(\vec{f})\in
W^{1,q}(\Omega)$. Thus we have $|G(\vec{f})|\in W^{1,q}
(\Omega)$. Fix $k\geq1$. We get from (4.1) that
$$\begin{array}{ll}
\quad\displaystyle\big||G(\vec{f})(x)|-g_k(x)\big|
&\leq\displaystyle\max\limits_{1\leq i\leq k}\Big|\prod\limits_{j=1}^m|f_j(x)|-\mathcal{A}_{t_i}(\vec{f})(x)\Big|\\
&\leq\displaystyle\sum\limits_{l=1}^m\max\limits_{1\leq i\leq k}\Big||f_l(x)|-\frac{1}{|B(x,t_i\delta(x))|}\int_{B(x,t_i\delta(x))}|f_l(y)|dy\Big|\\
&\quad\times\displaystyle\prod\limits_{\mu=1}^{l-1}|f_\mu(x)|
\prod\limits_{\nu=l+1}^m\frac{1}{|B(x,t_i\delta(x))|}\int_{B(x,t_i\delta(x))}|f_\nu(y)|dy\\
&\leq\displaystyle C\delta(x)\sum\limits_{l=1}^mM_\Omega|\nabla f_l|(x)
\prod\limits_{1\leq j\neq l\leq m}M_\Omega f_j(x).
\end{array}$$
It follows that
$$\big||G(\vec{f})(x)|-\mathfrak{M}_\Omega(\vec{f})(x)\big|\leq
C\delta(x)\sum\limits_{l=1}^mM_\Omega|\nabla f_l|(x)\prod\limits_{1\leq j\neq l\leq m}M_\Omega f_j(x).$$
This combining with H\"{o}lder's inequality and Minkowski's
inequality yields that
$$\begin{array}{ll}
\displaystyle\int_{\Omega}\Big(\frac{\big||G(\vec{f})(x)|-\mathfrak{M}_\Omega(\vec{f})(x)\big|}{{\rm dist}(x,\Omega^c)}\Big)^qdx
&\leq\displaystyle C\sum\limits_{l=1}^m\int_{\Omega}\Big(M_\Omega|\nabla f_l|(x)\prod\limits_{1\leq j\neq l\leq m}M_\Omega f_j(x)\Big)^qdx\\
&\leq\displaystyle C\sum\limits_{l=1}^m\|\nabla f_l\|_{L^{p_l}(\Omega)}\prod\limits_{1\leq j\neq\l\leq m}\|f_j\|_{L^{p_j}(\Omega)}<\infty.
\end{array}$$
By Theorem \ref{t:boundedness} we have $\mathfrak{M}_\Omega(\vec{f})\in
W^{1,q}(\Omega)$. Thus, $|G(\vec{f})|-\mathfrak{M}_\Omega
(\vec{f})\in W^{1,q}(\Omega)$. Using Lemma \ref{l:4.1} we conclude
that $|G(\vec{f})|-\mathfrak{M}_\Omega(\vec{f})\in W_0^{1,q}
(\Omega)$. 
\end{proof}

\begin{proof}[Proof of Theorem \ref{t:Sobolev0'}.]
Theorem \ref{t:boundedness} gives that $\mathfrak{M}_{\alpha,\Omega}(\vec{f})\in W^{1,q}(\Omega)$. For every $x\in\Omega$,  One can easily check that
$$\mathfrak{M}_{\alpha,\Omega}(\vec{f})(x)\leq{\rm dist}(x,\Omega^c)\mathfrak{M}_{\alpha-1,\Omega}(\vec{f})(x).\eqno(4.2)$$
Applying Lemma \ref{l:3.2} and (4.2) we obtain
$$\int_{\Omega}\Big(\frac{\mathfrak{M}_{\alpha,\Omega}(\vec{f})(x)}{{\rm dist}(x,\Omega^c)}\Big)^pdx<\int_{\Omega}(\mathfrak{M}_{\alpha-1,\Omega}(\vec{f})(x))^qdx\leq C\prod\limits_{j=1}^m\|f_j\|_{L^{p_j}(\Omega)}<\infty.$$
Then, by Lemma \ref{l:4.1} it holds that $\mathfrak{M}_{\alpha,\Omega}(\vec{f}) \in W_0^{1,q}(\Omega)$. 
\end{proof}

\section{Proofs of Theorems \ref{t:alpha=0continuous}-\ref{t:alpha>0continuous}}

\subsection{Preliminaries}

For $R>0$, let $B_R$ be the ball of radius $R$ centered
at the origin. For $A\subset\mathbb{R}^n$ and $x\in\mathbb{R}^n$,
let
$d(x,A):=\inf\limits_{a\in A}|x-a|\ \ {\rm and}\ A_{(\lambda)}:=\{x\in\mathbb{R}^n;d(x,A)\leq\lambda\}\ \ {\rm for}\ \lambda\geq0.$
For convenience we denote by $A_{x,r}(f)={|B(x,r)|^{-1}}
\int_{B(x,r)}f(y)dy$. The notation $K\subset\subset\Omega$
means that $K$ is open, bounded and $\overline{K}\subset\Omega$.

For every $x\in\Omega$, we define the function $u_{x,{\vec{f}},\alpha}
:[0,\delta(x)]\mapsto\mathbb{R}$ by
\[
  u_{x,\vec{f},\alpha}(0)= \left\{
\begin{array}{ll}
\displaystyle\prod\limits_{i=1}^m |f_i(x)|,&\ \ {\rm if}\ \alpha=0;\\
0,&\ \ {\rm if}\ \alpha>0.
\end{array}
\right.
\]
and
$u_{x,\vec{f},\alpha}(r)=r^\alpha\prod\limits_{i=1}^mA_{x,r}(f_i)\ \ {\rm when}\ \ r\in(0,\delta(x)],$
whence it holds that
$$\mathfrak{M}_{\alpha,\Omega}(\vec{f})(x)=\sup\limits_{r\in(0,\delta(x))}u_{x,\vec{f},\alpha}(r).$$
We also define the set $\mathcal{R}_\alpha(\vec{f})(x)$ by
$\mathcal{R}_\alpha(\vec{f})(x)=\{r\in[0,\delta(x)];\, \mathfrak{M}_{\alpha,\Omega}(\vec{f})(x)=u_{x,\vec{f},\alpha}(r)\}.$
If $\alpha=0$, we denote by $u_{x,\vec{f},\alpha}
=u_{x,\vec{f}}$ and $\mathcal{R}_\alpha(\vec{f})=
\mathcal{R}(\vec{f})$. Note that $u_{x,\vec{f},\alpha}$ are
continuous on $(0,\delta(x)]$ for all $x\in\Omega$ and at
$r=0$ for almost every $x\in\Omega$. Thus, if $x\in\Omega$
is a Lebesgue point of all $f_j$, then the function
$u_{x,\vec{f},\alpha}$ reaches its maximum on $[0,\delta(x)]$.
It follows that $\mathcal{R}_\alpha(\vec{f})(x)$ is nonempty
and closed for all $x\in\Omega$ being a Lebesgue point of
all $f_j$.

The following lemma is a multilinear version of the result in
\cite[Lemma 2.2]{Lu1}.

\begin{lemma}\label{l:5.1}
Let $0\leq\alpha<mn$
and $\vec{f_j}=(f_{1,j},f_{2,j},\ldots,f_{m,j})$. Suppose that
$f_{i,j}\rightarrow f_i$ in $L^{p_i}(\Omega)$ when $j\rightarrow
\infty$ for all $i=1,2,\ldots,m$, where $1<p_1,\ldots,p_m<\infty$,
$0<\frac{1}{q}-\frac{\alpha}{n}=\frac{1}{p_1}+\ldots+\frac{1}{p_m}<1$.
Let $\Omega_R=\Omega\cap B_R$.Then for all $R>0$ and $\lambda>0$, it holds that
$$\lim\limits_{j\rightarrow\infty}|\{x\in\Omega_R;\,\mathcal{R}_\alpha(\vec{f_j})(x)\nsubseteq\mathcal{R}_\alpha(\vec{f})(x)_{(\lambda)}\}|=0.\eqno(5.1)$$
\end{lemma}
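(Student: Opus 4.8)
The plan is to argue by contradiction, exploiting the uniform convergence of the "maximizing radius" sets in a localized fashion, together with the compactness of $\overline{\Omega_R}$ (or rather of a slightly enlarged bounded set) and the continuity of the profile functions $u_{x,\vec f,\alpha}$. Suppose (5.1) fails. Then there exist $R>0$, $\lambda>0$, $\eta>0$, a subsequence (still indexed by $j$), and points $x_j\in\Omega_R$ together with radii $r_j\in\mathcal R_\alpha(\vec{f_j})(x_j)$ with $\mathrm{dist}(r_j,\mathcal R_\alpha(\vec f)(x_j))>\lambda$, such that $|\{x\in\Omega_R:\mathcal R_\alpha(\vec{f_j})(x)\nsubseteq\mathcal R_\alpha(\vec f)(x)_{(\lambda)}\}|\geq\eta$ for all $j$. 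The first step is to upgrade the $L^{p_i}$ convergence $f_{i,j}\to f_i$ to something usable pointwise: pass to a further subsequence so that $f_{i,j}\to f_i$ a.e.\ in $\Omega$ for each $i$, and also so that the maximal functions $\mathfrak M_{\alpha,\Omega}(\vec{f_j})\to\mathfrak M_{\alpha,\Omega}(\vec f)$ in $L^q(\Omega_R)$ (this uses the multilinear splitting as in (3.11) together with Lemma \ref{l:3.2}, exactly the device already used to prove (3.9)), hence a.e.\ along a further subsequence.

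The heart of the argument is a pointwise statement: \emph{for a.e.\ $x\in\Omega$ (namely every common Lebesgue point of all $f_i$), if $x_j\to x$, $r_j\in\mathcal R_\alpha(\vec{f_j})(x_j)$, $r_j\to r$, and $f_{i,j}(x)\to f_i(x)$ for all $i$, then $r\in\mathcal R_\alpha(\vec f)(x)$.} I would prove this by showing (a) $u_{x_j,\vec{f_j},\alpha}(r_j)\to u_{x,\vec f,\alpha}(r)$ and (b) $u_{x_j,\vec{f_j},\alpha}(s)\to u_{x,\vec f,\alpha}(s)$ for every fixed $s\in[0,\delta(x)]$, uniformly enough to pass to the limit in the inequality $u_{x_j,\vec{f_j},\alpha}(r_j)\geq u_{x_j,\vec{f_j},\alpha}(s)$. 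For (a) and (b) one expands $u_{x,\vec f,\alpha}(s)=s^\alpha\prod_i A_{x,s}(f_i)$ and controls $|A_{x_j,s}(f_{i,j})-A_{x,s}(f_i)|$ by splitting into $|A_{x_j,s}(f_{i,j})-A_{x_j,s}(f_i)|$, bounded by $|B(0,s)|^{-1}\|f_{i,j}-f_i\|_{L^1(B(x_j,s))}\to 0$ since $x_j$ ranges in a bounded set and $f_{i,j}\to f_i$ in $L^{p_i}\subset L^1_{\mathrm{loc}}$, plus $|A_{x_j,s}(f_i)-A_{x,s}(f_i)|\to 0$ by continuity of $x\mapsto A_{x,s}(f_i)$; the handling of $s$ near $0$ and of the endpoint $r=\delta(x)$ (with $\delta(x_j)\to\delta(x)$, $\delta$ being Lipschitz) uses that $x$ is a Lebesgue point, so $u_{x,\vec f,\alpha}$ is continuous at $0$ with the prescribed value. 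Also $\mathfrak M_{\alpha,\Omega}(\vec{f_j})(x_j)=u_{x_j,\vec{f_j},\alpha}(r_j)$, and along our subsequence this converges to $\mathfrak M_{\alpha,\Omega}(\vec f)(x)$ for a.e.\ $x$; combining gives $u_{x,\vec f,\alpha}(r)=\mathfrak M_{\alpha,\Omega}(\vec f)(x)$, i.e.\ $r\in\mathcal R_\alpha(\vec f)(x)$.

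Now I assemble the contradiction. Let $E_j=\{x\in\Omega_R:\mathcal R_\alpha(\vec{f_j})(x)\nsubseteq\mathcal R_\alpha(\vec f)(x)_{(\lambda)}\}$; by assumption $|E_j|\geq\eta$ for all $j$ in our subsequence. For each $j$ and each $x\in E_j$ pick $r_j(x)\in\mathcal R_\alpha(\vec{f_j})(x)$ with $\mathrm{dist}(r_j(x),\mathcal R_\alpha(\vec f)(x))>\lambda$. Restrict attention to the full-measure set $G$ of common Lebesgue points of the $f_i$ at which moreover $f_{i,j}(x)\to f_i(x)$ for all $i$ and $\mathfrak M_{\alpha,\Omega}(\vec{f_j})(x)\to\mathfrak M_{\alpha,\Omega}(\vec f)(x)$; then $|E_j\cap G|\geq\eta$. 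By Fatou applied to $\chi_{E_j\cap G}$, the set $E_\infty$ of points lying in infinitely many $E_j\cap G$ has $|E_\infty|\geq\eta>0$, so $E_\infty$ is nonempty; fix $x\in E_\infty$, and along the subsequence of indices $j$ with $x\in E_j$ we have $r_j(x)\in[0,\delta(x)]$, a compact interval, so (passing once more to a subsequence) $r_j(x)\to r$ for some $r\in[0,\delta(x)]$ with $\mathrm{dist}(r,\mathcal R_\alpha(\vec f)(x))\geq\lambda>0$ (using that $\mathcal R_\alpha(\vec f)(x)$ is closed, by the remark preceding the lemma). But the pointwise statement above, applied with $x_j\equiv x$, forces $r\in\mathcal R_\alpha(\vec f)(x)$ — a contradiction.

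The main obstacle I anticipate is the passage to the limit in the profile functions when the maximizing radius $r_j$ escapes toward one of the endpoints $0$ or $\delta(x)$: near $r=0$ one must use the Lebesgue-point hypothesis to control $\prod_i A_{x,s}(f_i)$ as $s\to 0$ (and in the fractional case $\alpha>0$ the factor $s^\alpha$ helps, while in the case $\alpha=0$ one needs $A_{x,s}(f_i)\to f_i(x)$), and near $r=\delta(x)$ one must handle the simultaneous motion of the center $x_j$ and the fact that the admissible radius interval $[0,\delta(x_j)]$ is itself varying. Both are dispatched by a short equicontinuity estimate for $\{u_{x_j,\vec{f_j},\alpha}\}$ on $[0,\delta(x)+1]$ (extending $f_{i,j}$ by an $L^1$-convergent tail beyond $\delta(x_j)$ is unnecessary if one simply works on $[0,\delta(x)-\varepsilon]$ first and then lets $\varepsilon\to0$ using continuity of $\delta$), but it is the step that requires care. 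Everything else is bookkeeping with the multilinear product rule and the already-established $L^q$ convergence of the maximal functions.
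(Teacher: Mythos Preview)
Your argument is correct, but it is organized quite differently from the paper's. The paper proceeds directly and quantitatively: it first fixes a gap size $1/\gamma$ so that, outside a null set, $u_{x,\vec f,\alpha}(r)<\mathfrak M_{\alpha,\Omega}(\vec f)(x)-1/\gamma$ whenever $d(r,\mathcal R_\alpha(\vec f)(x))>\lambda$, and then covers the bad set by
\[
B_{1,j}=\Big\{x:\sup_r|u_{x,\vec f_j,\alpha}(r)-u_{x,\vec f,\alpha}(r)|\ge\tfrac{1}{2\gamma}\Big\},\qquad
B_{2,j}=\Big\{x:|\mathfrak M_{\alpha,\Omega}(\vec f_j)-\mathfrak M_{\alpha,\Omega}(\vec f)|\ge\tfrac{1}{4\gamma}\Big\}.
\]
Both inclusions are controlled, via the telescoping bound (3.11) and Chebyshev together with the $L^q$ estimate of Lemma~\ref{l:3.2}, by $C(\gamma)\sum_i\|f_{i,j}-f_i\|_{L^{p_i}}$, which tends to $0$. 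No subsequences, no a.e.\ convergence, no contradiction are used.

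Your route---contradiction, subsequence to upgrade to a.e.\ convergence, reverse Fatou on $\chi_{E_j}$ (note you really need reverse Fatou with the integrable majorant $\chi_{\Omega_R}$, not Fatou), and then a pointwise limit of the profile functions at a single surviving $x$---is a legitimate soft alternative. Two small points to tighten: (i) when $r_j\to 0$ you will also want $M_\Omega(|f_{i,j}-f_i|)(x)\to 0$ along your subsequence, so include that in your a.e.-convergence package; (ii) the general ``$x_j\to x$'' version of your pointwise statement is never used (you only apply it with $x_j\equiv x$), so you can drop that generality and the associated equicontinuity discussion. The trade-off is that the paper's argument gives an explicit rate and is reused almost verbatim in Lemma~\ref{l:5.2}, whereas your argument is cleaner conceptually but non-quantitative.
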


\begin{proof}
We may assume without
loss of generality that all $f_{i,j}\geq0$ and $f_i\geq0$.
Then for
given $\epsilon\in(0,1)$, there exists $N_0=N_0(m,\epsilon)
\in\mathbb{N}$ such that
$$\|f_{i,j}-f_i\|_{L^{p_i}(\Omega)}<\epsilon\ \ {\rm and}\ \|f_{i,j}\|_{L^{p_i}(\Omega)}\leq \|f_i\|_{L^{p_i}(\Omega)}+1\eqno(5.2)$$
for any $j\geq N_0$ and $i=1,2,\ldots,m$. Using the similar
argument as in the proof of \cite[Lemma 2.2]{Lu1}, we see
that the set $\{x\in\Omega_R;\ \mathcal{R}_\alpha(\vec{f_j})
(x)\nsubseteq\mathcal{R}(\vec{f})_\alpha(x)_{(\lambda)}\}$
is measurable for any $j\in\mathbb{Z}$ when all $f_{i,j}$
and $f_j$ are locally integrable functions. Let $\lambda>0$
and $R>0$. It easy to see that, for almost every $x\in\Omega_R$,
there exists $\gamma(x)\in\mathbb{N}\backslash\{0\}$ such
that
$$u_{x,\vec{f},\alpha}(r)<\mathfrak{M}_{\alpha,\Omega}(\vec{f})(x)-\frac{1}{\gamma(x)},\ \ {\rm when}\ d(r,\mathcal{R}_\alpha(\vec{f})(x))>\lambda.\eqno(5.3)$$
From
(3.3) we can conclude that there exists $\gamma=\gamma(R)
\in\mathbb{N}\backslash\{0\}$ such that
$$\Omega_R\subset\Big\{x\in\Omega_R:u_{x,\vec{f},\alpha}(r)<\mathfrak{M}_{\alpha,\Omega}(\vec{f})(x)-\frac{1}{\gamma},
\ \ {\rm if}\ d(r,\mathcal{R}_\alpha(\vec{f})(x))>\lambda\Big\}\cup E:=B\cup E,\eqno(5.4)$$
where $E$ is a zero measurable set. Define
$$B_{1,j}=\Big\{x\in\Omega_R: |u_{x,\vec{f_j},\alpha}(r)-u_{x,\vec{f},\alpha}(r)|\geq\frac{1}{2\gamma}\ \ {\rm for\ some}\ r\ {\rm such\ that}\ d(r,\mathcal{R}_\alpha(\vec{f})(x))>\lambda\Big\},$$
$$B_{2,j}=\Big\{x\in\Omega_R:|\mathfrak{M}_{\alpha,\Omega}(\vec{f_j})(x)-\mathfrak{M}_{\alpha,\Omega}(\vec{f})(x)|\geq\frac{1}{4\gamma}\Big\},$$
$$B_{3,j}=\Big\{x\in\Omega_R: u_{x,\vec{f_j},\alpha}(r)<\mathfrak{M}_{\alpha,\Omega}(\vec{f_j})(x)-\frac{1}{4\gamma},\ \ {\rm if}\ d(r,\mathcal{R}_\alpha(\vec{f})(x))>\lambda\Big\}.$$
Since $\mathfrak{M}_{\alpha,\Omega}(\vec{f})(x)-u_{x,\vec{f},\alpha}(r)$ is controlled by
$$|\mathfrak{M}_{\alpha,\Omega}(\vec{f_j})(x)-\mathfrak{M}_{\alpha,\Omega}(\vec{f})(x)|
+\mathfrak{M}_{\alpha,\Omega}(\vec{f_j})(x)-u_{x,\vec{f_j},\alpha}(r)+|u_{x,\vec{f_j},\alpha}(r)-u_{x,\vec{f},\alpha}(r)|,$$
it follows that
$B\subset B_{1,j}\cup B_{2,j}\cup B_{3,j}.$
Note that
$B_{3,j}\subset\{x\in\Omega_R:\mathcal{R}_\alpha(\vec{f_j})(x)\subset\mathcal{R}_\alpha(\vec{f})(x)_{(\lambda)}\}.$
Therefore, we obtain
$\{x\in\Omega_R;\mathcal{R}_\alpha(\vec{f_j})(x)\nsubseteq\mathcal{R}_\alpha(\vec{f})(x)_{(\lambda)}\}\subset E\cup B_{1,j}\cup B_{2,j}.$
Thus, for any $x\in\Omega$, let $\vec{F_j^i}=(f_1,\ldots,
f_{i-1},f_{i,j}-f_i,f_{i+1,j},\ldots,f_{m,j})$, we have
$$\begin{array}{ll}
&|\mathfrak{M}_{\alpha,\Omega}(\vec{f_j})(x)-\mathfrak{M}_{\alpha,\Omega}(\vec{f})(x)|\\
&\leq\displaystyle\sum\limits_{i=1}^{m}\sup\limits_{0<r<\delta(x)}r^\alpha
\prod\limits_{\mu=1}^{i-1}A_{x,r}(f_\mu)\prod\limits_{\nu=i+1}^mA_{x,r}(f_{\nu,j})A_{x,r}(|f_{i,j}-f_{i}|)\\
&=\displaystyle\sum\limits_{i=1}^m\mathfrak{M}_{\alpha,\Omega}(\vec{F_j^i})(x).
\end{array}\eqno(5.8)$$We get
from (5.8) that
$B_{2,j}\subset\Big\{x\in\Omega_R:\sum\limits_{i=1}^m\mathfrak{M}_{\alpha,\Omega}(\vec{F_j^i})(x)\geq\frac{1}{4\gamma}\Big\}.$
Similarly we get
$B_{1,j}\subset\Big\{x\in\Omega_R:\sum\limits_{i=1}^m\mathfrak{M}_{\alpha,\Omega}(\vec{F_j^i})(x)\geq\frac{1}{2\gamma}\Big\}$

(5.10) together with the above properties implies
that
$$\begin{array}{ll}
|\{x\in\Omega_R;\mathcal{R}_\alpha(\vec{f_j})(x)\nsubseteq\mathcal{R}_\alpha(\vec{f})(x)_{(\lambda)}\}|
&\leq\displaystyle2\Big|\Big\{x\in\Omega_R:\sum\limits_{i=1}^m\mathfrak{M}_{\alpha,\Omega}(\vec{F_j^i})(x)\geq\frac{1}{4\gamma}\Big\}\Big|\\
&\leq\displaystyle2(4m\gamma)^q\sum\limits_{i=1}^m\|\mathfrak{M}_{\alpha,\Omega}(\vec{F_j^i})\|_{L^q(\Omega)}\\
&\leq C(m,\gamma,q,\alpha,d,p_1,\ldots,p_m)\epsilon,
\end{array}$$
for all $j\geq N_0$. This yields (5.1) and completes the
proof of Lemma \ref{l:5.1}. 
\end{proof}

For $1\leq l\leq n$, let $e_l=(0,\ldots,0,1,0,\ldots,0)$
be the canonical $l$-th base vector in $\mathbb{R}^n$. Fix
$1\leq i\leq m$, $h\neq0$ and $f_i\in L^{p}(\Omega)$ with
$1\leq p<\infty$, we define the functions $f_{i,h}^l$ and
$f_{\tau(h)}^{i,l}$ by setting
$f_{i,h}^l(x)=\frac{f_{\tau(h)}^{i,l}(x)-f_i(x)}{h}\ \ {\rm and}\ \ f_{\tau(h)}^{i,l}(x)=f_i(x+he_l).$
We know that
$f_{\tau(h)}^{i,l}\rightarrow f_i$ in $L^{p}(K)$ for all
$K\subset\subset\Omega$ when $h\rightarrow0$, and
if $f_i\in W^{1,p}(\Omega)$ with $p>1$ we have $f_{i,h}^l
\rightarrow D_l f_i$ in $L^{p}(K)$ when $h\rightarrow0$
(see \cite[7.11]{GT}). We also known that $f_{\tau(h)}^{i,l}
\rightarrow f_i$ in $L^{p_i}(\Omega)$ with $p_i\geq1$ when
$h\rightarrow0$.

Let $A,\,B$ be two subsets of $\mathbb{R}^n$, we define the
Hausdorff distance of $A$ and $B$ by
$$\pi(A,B):=\inf\{\delta>0: A\subset B_{(\delta)}\ {\rm and}\ B\subset A_{(\delta)}\}.$$

The following lemma tells us that how close the sets
$\mathcal{R}_\alpha(\vec{f})(x)$ and $\mathcal{R}_\alpha
(\vec{f})(x+he_l)$ are when $h$ is small enough.

\begin{lemma}\label{l:5.2}
Let $\vec{f} =(f_1,\ldots,f_m)\in L^{p_1}(\Omega)\times\cdots\times L^{p_m} (\Omega)$ with $1<p_1,\ldots,p_m<\infty$, $0<\frac{1}{q}- \frac{\alpha}{n}=\frac{1}{p_1}+\ldots+\frac{1}{p_m}<1$.  Then for $K\subset\subset\Omega$, $\lambda>0$ and $l=1,2,\ldots,n$ it holds that $|\{x\in K;\pi(\mathcal{R}_\alpha(\vec{f})(x),\mathcal{R}_\alpha(\vec{f})(x+he_l))>\lambda\}|\rightarrow0\ \ {\rm when}\ h\rightarrow 0.$
\end{lemma}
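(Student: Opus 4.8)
The plan is to reduce the statement to a translation-stability fact about the near-maximizing radii and then invoke Lemma~\ref{l:5.1}. The key observation is that translation by $he_l$ acts on the family $\{f_j\}$ by replacing each $f_j$ with its translate $f_{\tau(h)}^{j,l}(\cdot)=f_j(\cdot+he_l)$, and by the remarks preceding the statement we have $f_{\tau(h)}^{j,l}\to f_j$ in $L^{p_j}(\Omega)$ as $h\to 0$ for each $j$. First I would fix $K\subset\subset\Omega$ and choose $R>0$ with $K_{(1)}\subset B_R$, and also fix some compactly contained neighborhood so that, for $|h|$ small, both $x$ and $x+he_l$ lie in a fixed $\Omega_{R'}$ and the relevant distances $\delta(x),\delta(x+he_l)$ stay bounded below; this lets all the maximal-function estimates be applied uniformly.

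The main step is a symmetric application of Lemma~\ref{l:5.1} in two directions. On one hand, applying Lemma~\ref{l:5.1} to the sequence $\vec f_h:=(f_{\tau(h)}^{1,l},\ldots,f_{\tau(h)}^{m,l})\to \vec f$ gives that, for every $\lambda>0$,
$$\lim_{h\to0}\bigl|\{x\in\Omega_R;\,\mathcal{R}_\alpha(\vec f_h)(x)\nsubseteq \mathcal{R}_\alpha(\vec f)(x)_{(\lambda)}\}\bigr|=0.$$
On the other hand, since $\mathcal{R}_\alpha(\vec f_h)(x)=\mathcal{R}_\alpha(\vec f)(x+he_l)$ (the averages of $f_j(\cdot+he_l)$ over $B(x,r)$ equal the averages of $f_j$ over $B(x+he_l,r)$, and $\delta(x+he_l)$ is the relevant radius cutoff — here one uses that for $x\in K$ and $|h|$ small the ball $B(x,t\delta(x))$ appearing in the definition is contained in $\Omega$ so that the local and translated-local maximal functions genuinely match on the relevant range of radii, with only a negligible set of bad $x$), the first inclusion says $\mathcal{R}_\alpha(\vec f)(x+he_l)\subset \mathcal{R}_\alpha(\vec f)(x)_{(\lambda)}$ off a set of $x$ whose measure tends to $0$. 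For the reverse inclusion $\mathcal{R}_\alpha(\vec f)(x)\subset \mathcal{R}_\alpha(\vec f)(x+he_l)_{(\lambda)}$, I would change variables $x\mapsto x-he_l$ and apply Lemma~\ref{l:5.1} to the sequence $(f_{\tau(-h)}^{1,l},\ldots,f_{\tau(-h)}^{m,l})\to\vec f$ on $\Omega_{R}$; translating back shows that this reverse inclusion also fails only on a set of measure $o(1)$ as $h\to0$. Combining both inclusions, for $x\in K$ outside an exceptional set of measure $o(1)$ we get $\pi(\mathcal{R}_\alpha(\vec f)(x),\mathcal{R}_\alpha(\vec f)(x+he_l))\le\lambda$, which is exactly the claim.

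The main obstacle I anticipate is the bookkeeping needed to identify $\mathcal{R}_\alpha(\vec f_h)(x)$ with $\mathcal{R}_\alpha(\vec f)(x+he_l)$ cleanly: the sup defining $\mathfrak M_{\alpha,\Omega}$ at $x$ runs over $r<\delta(x)$ while at $x+he_l$ it runs over $r<\delta(x+he_l)$, so these maximizing sets do not coincide on the nose. The remedy is to work on $K\subset\subset\Omega$, where $\inf_{x\in K}\delta(x)=:d_0>0$, and to note $|\delta(x)-\delta(x+he_l)|\le|h|$; the discrepancy in the range of admissible radii is then an interval of length $\le|h|$ near the endpoint $d_0$, and since the near-maximizers we care about (within $\lambda$ of $\mathcal{R}_\alpha$) are controlled by the quantitative gap~(5.3)--(5.4) from the proof of Lemma~\ref{l:5.1}, radii in this thin shell cannot be $\lambda$-near-maximizers for small $h$ except on a set of $x$ of small measure. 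Once this identification is handled with care, the rest is a direct double application of Lemma~\ref{l:5.1} together with a triangle-inequality argument for the Hausdorff distance.
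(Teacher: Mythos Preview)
Your overall strategy is the same as the paper's: both arguments reduce (5.11)--(5.12) to the translation stability $f_{\tau(h)}^{i,l}\to f_i$ in $L^{p_i}(\Omega)$ and then run the machinery behind Lemma~\ref{l:5.1}. The difference is that the paper does \emph{not} apply Lemma~\ref{l:5.1} as a black box; it reopens that argument and inserts an extra exceptional set $B_{3,h}$ precisely to handle the endpoint discrepancy you flag.

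The reason a direct application of Lemma~\ref{l:5.1} does not close the argument is the one you anticipate, but your remedy is not quite enough. The identity $u_{x,\vec f_h,\alpha}(r)=u_{x+he_l,\vec f,\alpha}(r)$ holds, yet $\mathcal{R}_\alpha(\vec f_h)(x)$ maximizes this function over $[0,\delta(x)]$ while $\mathcal{R}_\alpha(\vec f)(x+he_l)$ maximizes it over $[0,\delta(x+he_l)]$, so these sets can be entirely disjoint when the true maximizer lies in the shell $(\delta(x)\wedge\delta(x+he_l),\ \delta(x)\vee\delta(x+he_l)]$. Invoking the gap condition (5.3)--(5.4) alone does not rule this out: that condition bounds $u_{x,\vec f,\alpha}(r)$, not $u_{x+he_l,\vec f,\alpha}(r)$, and it says nothing about radii outside $[0,\delta(x)]$. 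What the paper does in its case~(ii) is combine three ingredients: the gap condition (5.13) applied at the nearby \emph{admissible} radius $\delta(x+he_l)-|h|$, the bound $\sum_i\mathfrak{M}_{\alpha,\Omega}(\vec G^i_{l,h})$ to pass from $u_{x+he_l,\vec f,\alpha}$ to $u_{x,\vec f,\alpha}$, and the continuity set
\[
B_{3,h}=\Bigl\{x:\ \exists\, r\in[\delta(x)-2|h|,\delta(x)]\ \text{with}\ \bigl|g(r)-g(\delta(x+he_l)-|h|)\bigr|>\tfrac{1}{8\gamma}\Bigr\},
\]
where $g(r)=r^\alpha\prod_i A_{x+he_l,r}(|f_i|)$, whose measure tends to zero by continuity of the integral averages. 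Your sketch omits this third piece. Once you add it, your argument becomes the paper's: rather than a clean invocation of Lemma~\ref{l:5.1}, you end up redoing its proof with $B_{1,h},B_{2,h},B_{3,h}$ in place of $B_{1,j},B_{2,j}$, exactly as in the paper.
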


\begin{proof}
Fix $1\leq l\leq n$. It suffices
to show that
$$\lim\limits_{h\rightarrow0}|\{x\in K:\mathcal{R}_\alpha(\vec{f})(x+he_l)\nsubseteq\mathcal{R}_\alpha(\vec{f})(x)_{(\lambda)}\}|=0,\eqno(5.11)$$
$$\lim\limits_{h\rightarrow0}|\{x\in K:\mathcal{R}_\alpha(\vec{f})(x)\nsubseteq\mathcal{R}_\alpha(\vec{f})(x+he_l)_{(\lambda)}\}|=0.\eqno(5.12)$$
We only prove (5.11). 
Motivated by the idea in the proof of \cite[Lemma 2.3]{Lu2},
we now prove (5.11). By the same argument as in getting
(5.4), there exists $\gamma\in\mathbb{N}
\backslash\{0\}$ such that
$$K\subset\Big\{x\in K:u_{x,\vec{f},\alpha}(r)<\mathfrak{M}_{\alpha,\Omega}(\vec{f})(x)-\frac{1}{\gamma},
\ \ {\rm if}\ d(r,\mathcal{R}_\alpha(\vec{f})(x))>\lambda\Big\}\cup E:=B\cup E,\eqno(5.13)$$
where $E$ is a zero measurable set. Fix $h\in\mathbb{R}$,
let
$$B_{1,h}=\Big\{x\in K:|\mathfrak{M}_{\alpha,\Omega}(\vec{f})(x+he_l)-\mathfrak{M}_{\alpha,\Omega}(\vec{f})(x)|>\frac{1}{4\gamma}\Big\},$$
$$B_{2,h}=\Big\{x\in K: \sum\limits_{i=1}^{m}\mathfrak{M}_{\alpha,\Omega}(\vec{G_{l,h}^i})(x)>\frac{1}{2\gamma}\Big\},$$
where
$$\vec{G_{l,h}^i}=(f_1,\ldots,f_{i-1},f_{\tau(h)}^{i,l}-f_i,f_{\tau(h)}^{i+1,l},\ldots,f_{\tau(h)}^{m,l})\eqno(5.14)$$
and
$$\begin{array}{ll}
B_{3,h}&=\displaystyle\Big\{x\in\Omega: \exists r\in[\delta(x)-2|h|,\delta(x)]\ \ {\rm such\ that}\\
&\displaystyle\Big|r^\alpha\prod\limits_{i=1}^mA_{x+he_l,r}(|f_i|)-(\delta(x+he_l)-|h|)^\alpha
\prod\limits_{i=1}^mA_{x+he_l,\delta(x+he_l)-|h|}(|f_i|)\Big|>\frac{1}{8\gamma}\Big\}.
\end{array}$$

Now, for $h$ small enough, we shall prove that
$$\{x\in K:\mathcal{R}_\alpha(\vec{f})(x+he_l)\nsubseteq\mathcal{R}_\alpha(\vec{f})(x)_{(2\lambda)}\}\subset B_{1,h}\cup B_{2,h}\cup(B_{3,h}-he_l)\cup E=:B_h\eqno(5.15)$$
Choose $h_0\in(0,\lambda)$ such that
$K_{(2h_0)}\subset\Omega$. We want to show that for
$x\in B\backslash B_h$ with $|h|<\frac{1}{2}\min\{h_0,
\delta(x)\}$, there exists $r\in\mathcal{R}_\alpha(\vec{f})
(x+he_l)$ such that $d(r,\mathcal{R}_\alpha(\vec{f})(x))\leq
2\lambda$. Otherwise,
assume that $d(r,\mathcal{R}_\alpha(\vec{f})(x))>2\lambda$.
We consider the following two cases:

{\rm (i)} Suppose that $r<\delta(x)-|h|$. We get from (5.13)
that
$$\begin{array}{ll}
&\mathfrak{M}_{\alpha,\Omega}(\vec{f})(x+he_l)=\displaystyle r^\alpha\prod\limits_{i=1}^mA_{x+he_l,r}(|f_i|)=\displaystyle r^\alpha\prod\limits_{i=1}^mA_{x,r}(|f_{\tau(h)}^{i,l}|)\\
&\leq\displaystyle r^\alpha\Big|\prod\limits_{i=1}^mA_{x,r}(|f_{\tau(h)}^{i,l}|)-\prod\limits_{i=1}^mA_{x,r}(|f_i|)\Big|+r^\alpha\prod\limits_{i=1}^mA_{x,r}(|f_i|)\\
&\leq\displaystyle\sum\limits_{i=1}^{m}r^\alpha A_{x,r}(|f_{\tau(h)}^{i,l}-f_i|)\prod\limits_{\mu=1}^{i-1}A_{x,r}(|f_\mu|)
\prod\limits_{\nu=i+1}^mA_{x,r}(|f_{\tau(h)}^{\nu,l}|)+\mathfrak{M}_{\alpha,\Omega}(\vec{f})(x)-\frac{1}{\gamma}\\
&\leq\displaystyle\sum\limits_{i=1}^{m}\mathfrak{M}_{\alpha,\Omega}(\vec{G_{l,h}^i})(x)
+\mathfrak{M}_{\alpha,\Omega}(\vec{f})(x)-\frac{1}{\gamma}\\
&\leq\displaystyle\frac{1}{2\gamma}+\mathfrak{M}_{\alpha,\Omega}(\vec{f})(x)-\frac{1}{\gamma}
\leq\mathfrak{M}_{\alpha,\Omega}(\vec{f})(x)-\frac{1}{2\gamma},
\end{array}$$
where $\vec{G_{l,h}^i}$ is given as in (5.14). This yields that
$|\mathfrak{M}_{\alpha,\Omega}(\vec{f})(x)-\mathfrak{M}_{\alpha,
\Omega}(\vec{f})(x+he_l)|\geq\frac{1}{2\gamma}$, which yields
$x\in B_{1,h}$ and a contradiction.

{\rm (ii)} Suppose that $r\in[\delta(x)-|h|,\delta(x+he_l)]$.
Observe that $d(\delta(x)-|h|,\mathcal{R}_\alpha(\vec{f})(x))
>\lambda$ and $\delta(x+he_l)-|h|<\delta(x)$. Specially, when
$|h|$ is small enough we have
$d(\delta(x+he_l)-|h|,\mathcal{R}_\alpha(\vec{f})(x))>\lambda$.
We can write
$$\begin{array}{ll}
&\mathfrak{M}_{\alpha,\Omega}(\vec{f})(x+he_l)
=\displaystyle r^\alpha\prod\limits_{i=1}^mA_{x+he_l,r}(|f_i|)\\
&\leq\displaystyle\Big|r^\alpha\prod\limits_{i=1}^mA_{x+he_l,r}(|f_i|)
-(\delta(x+he_l)-|h|)^\alpha\prod\limits_{i=1}^mA_{x+he_l,\delta(x+he_l)-|h|}(|f_i|)\Big|\\
&\quad+\displaystyle(\delta(x+he_l)-|h|)^\alpha\Big|\prod\limits_{i=1}^mA_{x,\delta(x+he_l)-|h|}(|f_{\tau(h)}^{i,l}|)
-\prod\limits_{i=1}^mA_{x,\delta(x+he_l)-|h|}(|f_i|)\Big|\\
&\quad+\displaystyle(\delta(x+he_l)-|h|)^\alpha\prod\limits_{i=1}^mA_{x,\delta(x+he_l)-|h|}(|f_i|).
\end{array}\eqno(5.16)$$
One can easily check that
$$\begin{array}{ll}
&\displaystyle(\delta(x+he_l)-|h|)^\alpha\Big|\prod\limits_{i=1}^mA_{x,\delta(x+he_l)-|h|}(|f_{\tau(h)}^{i,l}|)
-\prod\limits_{i=1}^mA_{x,\delta(x+he_l)-|h|}(|f_i|)\Big|\\
&\leq\displaystyle\sum\limits_{i=1}^{m}\mathfrak{M}_{\alpha,\Omega}(\vec{G_{l,h}^i})(x),
\end{array}\eqno(5.17)$$
where $\vec{G_{l,h}^i}$ is given as in (5.14). Using (5.13) and
(5.16)-(5.17) we get
$$\mathfrak{M}_{\alpha,\Omega}(\vec{f})(x+he_l)
\leq\frac{1}{8\gamma}+\frac{1}{2\gamma}+\mathfrak{M}_{\alpha,\Omega}(\vec{f})(x)-\frac{1}{\gamma}
<\mathfrak{M}_{\alpha,\Omega}(\vec{f})(x)-\frac{1}{4\gamma},$$
which leads to $|\mathfrak{M}_{\alpha,\Omega}(\vec{f})(x)-
\mathfrak{M}_{\alpha,\Omega}(\vec{f})(x+he_l)|>\frac{1}{4\gamma}$.
Thus we have $x\in B_{1,h}$, which is a contradiction and
hence (5.15) holds.

It remains to show that
$$\lim\limits_{h\rightarrow0}|B_h|=0.\eqno(5.18)$$
Obviously, $|B_{3,h}-he_l|\rightarrow0$ when $h\rightarrow0$.
It suffices to show that $|B_{1,h}\cup B_{2,h}|\rightarrow0$
when $h\rightarrow0$. Let $\vec{f_h^l}=(f_{\tau(h)}^{1,l},
\ldots,f_{\tau(h)}^{m,l})$. By the similar argument as in
getting (5.8) we have
$|\mathfrak{M}_{\alpha,\Omega}(\vec{f})(x+he_l)-\mathfrak{M}_{\alpha,\Omega}(\vec{f})(x)|
\leq\sum\limits_{i=1}^{m}\mathfrak{M}_{\alpha,\Omega}(\vec{G_{l,h}^i})(x),$
where $\vec{G_{l,h}^i}$ is given as in (5.14). thus
$$\begin{array}{ll}
|B_{1,h}\cup B_{2,h}|
&\leq\displaystyle2\Big|\Big\{x\in K:\sum\limits_{i=1}^m\mathfrak{M}_{\alpha,\Omega}(\vec{G_{l,h}^i})(x)\geq\frac{1}{4\gamma}\Big\}\Big|\\
&\leq\displaystyle2\sum\limits_{i=1}^m\Big|\Big\{x\in K:\mathfrak{M}_{\alpha,\Omega}(\vec{G_{l,h}^i})(x)\geq\frac{1}{4m\gamma}\Big\}\Big|\\
&\leq\displaystyle2(4m\gamma)^q\sum\limits_{i=1}^m\prod\limits_{\mu=1}^{i-1}\|f_\mu\|_{L^{p_\mu}(\Omega)}
\|f_{\tau(h)}^{i,l}-f_i\|_{L^{p_i}(\Omega)}\prod\limits_{\nu=i+1}^m\|f_{\tau(h)}^{\nu,l}\|_{L^{p_\nu}(\Omega)},
\end{array}$$
which yields that $|B_{1,h}\cup B_{2,h}|\rightarrow0$
when $h\rightarrow0$, and then (5.18) holds. (5.18)
together with (5.15) yields (5.11). This completes
the proof of Lemma \ref{l:5.2}. 
\end{proof}

The following key lemma 
enable us to give the proofs of Theorems \ref{t:alpha=0continuous}-\ref{t:alpha>0continuous}.

\begin{lemma}\label{l:5.3}
Let $1<p_1,\ldots,p_m<\infty$,
$0<\sum_{i=1}^m1/p_i-\alpha/n=1/q<1$.  Let $\vec{f}
\in W^{1,p_1}(\Omega)\times\cdots
\times W^{1,p_m}(\Omega)$. Then for
$1\le l\le n$ and almost every $x\in\Omega$ we have
$$D_l\mathfrak{M}_{\alpha,\Omega}(\vec{f})(x)=\sum\limits_{i=1}^mr^\alpha\prod\limits_{1\leq j\neq i\leq m}A_{x,r}(|f_j|)A_{x,r}(D_l|f_i|)\ \ {\rm for\ }\ r\in\mathcal{R}_\alpha(\vec{f})(x),\ 0<r<\delta(x);\eqno(5.19)$$
\[
 D_l\mathfrak{M}_{\alpha,\Omega}(\vec{f})(x)= \left\{
\begin{array}{ll}
\displaystyle\sum\limits_{i=1}^m D_l|f_i|(x)\prod\limits_{1\leq j\neq i\leq m}|f_j(x)|&\ \ {\rm if}\ \alpha=0\ {\rm and}\ 0\in\mathcal{R}_\alpha(\vec{f})(x),\\
0,&\ \ {\rm if}\ \alpha>0\ {\rm and}\ 0\in\mathcal{R}_\alpha(\vec{f})(x).
\end{array}
\right.\eqno(5.20)
\]
\end{lemma}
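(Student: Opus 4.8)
The plan is to identify the weak derivative $D_l\mathfrak{M}_{\alpha,\Omega}(\vec f)$ by comparing $\mathfrak{M}_{\alpha,\Omega}(\vec f)$ \emph{from above} with the family of smooth comparison functions $x\mapsto u_{x,\vec f,\alpha}(r)$ (with the radius $r$ frozen), exploiting that the supremum defining $\mathfrak{M}_{\alpha,\Omega}(\vec f)$ is attained exactly at the radii in $\mathcal{R}_\alpha(\vec f)(x)$. As always we may assume all $f_j\ge 0$. First I would record that $\mathfrak{M}_{\alpha,\Omega}(\vec f)\in W^{1,1}_{\rm loc}(\Omega)$ — under the present hypotheses this follows by localizing the pointwise gradient estimates of Section~2 and applying Lemma~\ref{l:3.2} — so that $\mathfrak{M}_{\alpha,\Omega}(\vec f)$ is approximately differentiable at almost every $x\in\Omega$ with approximate gradient equal to its weak gradient, and $\nabla\mathfrak{M}_{\alpha,\Omega}(\vec f)$ vanishes a.e.\ on every level set of $\mathfrak{M}_{\alpha,\Omega}(\vec f)$. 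Next I would establish the regularity of the comparison functions: for $r>0$ put $V_r=\{x\in\Omega:\delta(x)>r\}$. Since each $f_i\in W^{1,p_i}(\Omega)$ and $x\mapsto A_{x,r}(f_i)$ is the restriction to $V_r$ of the convolution $f_i*\big(\chi_{B(0,r)}/|B(0,r)|\big)$ of a $W^{1,1}_{\rm loc}$ function with a bounded compactly supported kernel, the map $x\mapsto A_{x,r}(f_i)$ lies in $C^1(V_r)$ with $\nabla_x A_{x,r}(f_i)=A_{x,r}(\nabla f_i)$ at \emph{every} point of $V_r$, with no exceptional null set. Hence $U_r(x):=u_{x,\vec f,\alpha}(r)=r^\alpha\prod_{i=1}^m A_{x,r}(f_i)\in C^1(V_r)$ with
\[
\nabla U_r(x)=\sum_{i=1}^m r^\alpha\Big(\prod_{1\le j\ne i\le m}A_{x,r}(f_j)\Big)A_{x,r}(\nabla f_i),\qquad x\in V_r.
\]

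The key elementary tool is a ``touching from above'' principle: if $g\ge\phi$ a.e.\ in a neighbourhood of a point $x_0$, $g(x_0)=\phi(x_0)$, and both $g$ and $\phi$ are approximately differentiable at $x_0$, then their approximate gradients at $x_0$ agree. I would prove this by the standard cone argument: were $v:=\text{ap-}\nabla g(x_0)-\text{ap-}\nabla\phi(x_0)\ne 0$, the two approximate Taylor expansions together with $g\ge\phi$ would force $v\cdot(y-x_0)\ge o(|y-x_0|)$ as $y\to x_0$ along a set of density $1$ at $x_0$, contradicting that $\{y:v\cdot(y-x_0)\le-|v|\,|y-x_0|/2\}$ has positive lower density at $x_0$.

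With these in hand, fix $l$ and let $Z\subset\Omega$ be the null set off which $\mathfrak{M}_{\alpha,\Omega}(\vec f)$ is approximately differentiable with approximate gradient $\nabla\mathfrak{M}_{\alpha,\Omega}(\vec f)$, the function $\prod_{i=1}^m f_i$ (which belongs to $W^{1,q}(\Omega)$ when $\alpha=0$) is approximately differentiable with gradient given by the Leibniz rule, and every point is a Lebesgue point of all $f_j$ — so that $\mathcal{R}_\alpha(\vec f)(x)$ is nonempty and closed and $0\in\mathcal{R}_\alpha(\vec f)(x)$ amounts to $\mathfrak{M}_{\alpha,\Omega}(\vec f)(x)=u_{x,\vec f,\alpha}(0)$. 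Take $x\in\Omega\setminus Z$. If $r\in\mathcal{R}_\alpha(\vec f)(x)$ with $0<r<\delta(x)$, then $V_r$ is a neighbourhood of $x$, $\mathfrak{M}_{\alpha,\Omega}(\vec f)\ge U_r$ on $V_r$, and $\mathfrak{M}_{\alpha,\Omega}(\vec f)(x)=U_r(x)$; since $U_r\in C^1(V_r)$, the touching principle gives $\nabla\mathfrak{M}_{\alpha,\Omega}(\vec f)(x)=\nabla U_r(x)$, which is precisely (5.19) (and shows in passing that the right side is independent of the choice of $r\in\mathcal{R}_\alpha(\vec f)(x)$). If $0\in\mathcal{R}_\alpha(\vec f)(x)$ and $\alpha>0$, then $\mathfrak{M}_{\alpha,\Omega}(\vec f)(x)=0$, so $x$ lies in the zero set of the nonnegative function $\mathfrak{M}_{\alpha,\Omega}(\vec f)$, on which the weak gradient vanishes a.e.; this yields the $\alpha>0$ case of (5.20). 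If $0\in\mathcal{R}_\alpha(\vec f)(x)$ and $\alpha=0$, then $\mathfrak{M}_{\Omega}(\vec f)(x)=\prod_{i=1}^m f_i(x)$ while $\mathfrak{M}_{\Omega}(\vec f)\ge\prod_{i=1}^m f_i$ a.e.; applying the touching principle with $\phi=\prod_{i=1}^m f_i$ and the Leibniz rule for $\nabla\phi$ gives the $\alpha=0$ case of (5.20).

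I expect the main obstacle to be the second step above — verifying that each comparison function $U_r$ is genuinely $C^1$ in $x$ with the stated gradient at \emph{every} point of $V_r$. This is exactly what permits the use of a single null set $Z$, independent of $r$, so that no exceptional set accumulates as $r$ runs over the (possibly uncountable) set $\mathcal{R}_\alpha(\vec f)(x)$; it is what makes the clean approximate-differentiability argument work in place of a delicate two-sided difference-quotient estimate of the type used for Lemma~\ref{l:5.2}. The remaining ingredients — the $W^{1,1}_{\rm loc}$ membership of $\mathfrak{M}_{\alpha,\Omega}(\vec f)$, the convolution identity $\nabla_x A_{x,r}(f_i)=A_{x,r}(\nabla f_i)$, and the Leibniz rule for products of Sobolev functions — are routine.
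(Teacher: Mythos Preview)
Your approach is correct and genuinely different from the paper's. The paper proceeds by a direct difference-quotient argument: it invokes Lemma~\ref{l:5.2} (stability of the optimal-radius sets $\mathcal{R}_\alpha(\vec{f})(x)$ under small translations) to produce, for a carefully extracted sequence $h_k\to 0^+$, radii $r_k\in\mathcal{R}_\alpha(\vec{f})(x+h_ke_l)$ with $r_k\to r$ for each $r\in\mathcal{R}_\alpha(\vec{f})(x)$, and then squeezes the difference quotient $h_k^{-1}\big(\mathfrak{M}_{\alpha,\Omega}(\vec{f})(x+h_ke_l)-\mathfrak{M}_{\alpha,\Omega}(\vec{f})(x)\big)$ from above and below by evaluating $u_{\,\cdot\,,\vec f,\alpha}$ at $r_k$ and at $r$ respectively. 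Your touching-from-above principle bypasses Lemma~\ref{l:5.2} altogether: once $U_r\in C^1(V_r)$ and $\mathfrak{M}_{\alpha,\Omega}(\vec f)\in W^{1,1}_{\rm loc}$ are established, approximate differentiability together with $\mathfrak{M}_{\alpha,\Omega}(\vec f)\ge U_r$ and equality at $x$ forces the gradients to coincide, with a single exceptional null set independent of $r$. This is more conceptual and avoids the subsequence bookkeeping; the trade-off is that the paper's route is more self-contained (no appeal to the Calder\'on--Zygmund approximate-differentiability theorem or to vanishing of weak gradients on level sets), and its Lemma~\ref{l:5.2} machinery is reused in the continuity proofs of Theorems~\ref{t:alpha=0continuous}--\ref{t:alpha>0continuous}. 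One small point worth making explicit in your write-up: the null set $Z$ should also absorb the measure-zero set where the pointwise-defined supremum $\mathfrak{M}_{\alpha,\Omega}(\vec f)(x)$ differs from its precise (Lebesgue) representative, so that the value entering the approximate Taylor expansion is indeed the pointwise one you compare with $U_r(x)$.
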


\begin{proof}
Fix $1\leq l\leq n$. Without loss
of generality we may assume that all $f_i\geq0$. Let $K\subset\subset\Omega$. By Lemma
\ref{l:5.2} we can choose a sequence $\{s_k\}_{k=1}^\infty$, $s_k>0$
and $s_k\rightarrow0$ such that $\lim\limits_{k\rightarrow\infty}\pi
(\mathcal{R}_\alpha(\vec{f})(x),\mathcal{R}_\alpha(\vec{f})
(x+h_ke_l))=0$ a.e. $x\in K$. Then for any
$i=1,2,\ldots,m$, we have
$$\|f_{\tau(s_k)}^{i,l}-f_i\|_{L^{p_i}(K)}\rightarrow0\ \ {\rm as}\ k\rightarrow\infty,$$
$$\|f_{i,s_k}^l-D_lf_i\|_{L^{p_i}(K)}\rightarrow0\ \ {\rm as}\ k\rightarrow\infty,$$
$$\|M_\Omega(f_{\tau(s_k)}^{i,l}-f_i)\|_{L^{p_i}(K)}\rightarrow0\ \ {\rm as}\ k\rightarrow\infty,$$
$$\|M_\Omega (f_{i,s_k}^l-D_lf_i)\|_{L^{p_i}(K)}\rightarrow0\ \ {\rm as}\ k\rightarrow\infty,$$
$$\|(\mathfrak{M}_{\alpha,\Omega}(\vec{f}))_{s_k}^l-D_l\mathfrak{M}_{\alpha,\Omega}(\vec{f})\|_{L^q(K)}\rightarrow0\ \ {\rm as}\ k\rightarrow\infty,$$
where
$$(\mathfrak{M}_{\alpha,\Omega}(\vec{f}))_{s_k}^l(x)=\frac{\mathfrak{M}_{\alpha,\Omega}(\vec{f})(x+s_ke_l)
-\mathfrak{M}_{\alpha,\Omega}(\vec{f})(x)}{s_k}.$$
Furthermore, there exists a subsequence $\{h_k\}_{k=1}^\infty$
of $\{s_k\}_{k=1}^\infty$ and a measurable set $B_1\subset K$
such that $|K\backslash B_1|=0$ and
\begin{enumerate}
\item[{(i)}] $f_{\tau(h_k)}^{i,l}(x)\rightarrow f_i(x)$, $f_{i,h_k}^l(x)
\rightarrow D_lf_i(x)$, $M_\Omega(f_{\tau(h_k)}^{i,l}-f_i)(x)
\rightarrow0$, $M_\Omega(f_{i,h_k}^l-D_lf_i)(x)\rightarrow0$
and $(\mathfrak{M}_{\alpha,\Omega}(\vec{f}))_{h_k}^l(x)
\rightarrow D_l\mathfrak{M}_{\alpha,\Omega}(\vec{f})(x)$ when
$k\rightarrow\infty$ for any $x\in B_1$ and $i=1,2,\ldots,m$;

\item[{(ii)}]$\lim_{k\rightarrow\infty}\pi(\mathcal{R}_\alpha(\vec{f})(x),
\mathcal{R}_\alpha(\vec{f})(x+h_ke_l))=0$ for any $x\in B_1$.
\end{enumerate}
Let
$B_2:=\bigcap\limits_{k=1}^\infty\{x\in K:\mathfrak{M}_{\alpha,\Omega}(\vec{f})(x+h_ke_l)=u_{x+h_ke_l,\vec{f},\alpha}(0)\ {\rm if}\ 0\in\mathcal{R}_\alpha(\vec{f})(x+h_ke_l)\},$
$B_3:=\{x\in K:\mathfrak{M}_{\alpha,\Omega}(\vec{f})(x)=u_{x,\vec{f},\alpha}(0)\ {\rm if}\ 0\in\mathcal{R}_\alpha(\vec{f})(x)\}.$

One can easily check that $|K\backslash B_i|=0$ for any $i=2,3$.
Let $x\in B_1\cap B_2\cap B_3$ be a Lebesgue point of all $f_i$,
$f_{\tau(h_k)}^{i,l}$ and $D_l f_i$ and $r\in\mathcal{R}_\alpha
(\vec{f})(x)$ with $r<\delta(x)$, there exists radii
$r_k\in\mathcal{R}_\alpha(\vec{f})(x+h_ke_l)$ such that
$\lim_{k\rightarrow\infty}r_k=r$. We consider
two cases:

{\bf Case A} ($r>0$). Without loss of generality we assume
that all $r_k>0$. Write
$$\begin{array}{ll}
D_l\mathfrak{M}_{\alpha,\Omega}(\vec{f})(x)&=\displaystyle\lim\limits_{k\rightarrow\infty}
\frac{1}{h_k}(\mathfrak{M}_{\alpha,\Omega}(\vec{f})(x+h_ke_l)-\mathfrak{M}_{\alpha,\Omega}(\vec{f})(x))\\
&\leq\displaystyle\lim\limits_{k\rightarrow\infty}\frac{1}{h_k}(u_{x+h_ke_l,\vec{f},\alpha}(r_k)-u_{x,\vec{f},\alpha}(r_k))\\
&=\displaystyle\sum\limits_{i=1}^m\lim\limits_{k\rightarrow\infty}r_k^\alpha A_{x,r_k}(f_{i,h_k}^l)
\prod\limits_{\mu=1}^{i-1}A_{x,r_k}(f_{\tau(h_k)}^{\mu,l})\prod\limits_{\nu=i+1}^{m}A_{x,r_k}(f_\nu)
\end{array}\eqno(5.21)$$
Since $\lim\limits_{k\rightarrow\infty}|B(x,r_k)|=|B(x,r)|$,
$f_{\tau(h_k)}^{i,l}\chi_{B(x,r_k)}\rightarrow f_i\chi_{B(x,r)}$
and $f_{j,h_k}^{l}\chi_{B(x,r_k)}\rightarrow D_lf_i\chi_{B(x,r)}$
in $L^1(\Omega)$ as $k\rightarrow\infty$. Thus we have
$D_l\mathfrak{M}_{\alpha,\Omega}(\vec{f})(x)\leq\sum\limits_{i=1}^mr^\alpha A_{x,r}(D_lf_i)\prod\limits_{1\leq j\neq i\leq m}A_{x,r}(f_j).$
On the other hand,
$$\begin{array}{ll}
D_l\mathfrak{M}_{\alpha,\Omega}(\vec{f})(x)
&=\displaystyle\lim\limits_{k\rightarrow\infty}\frac{1}{h_k}(\mathfrak{M}_{\alpha,\Omega}(\vec{f})(x+h_ke_l)
-\mathfrak{M}_{\alpha,\Omega}(\vec{f})(x))\\
&\geq\displaystyle\sum\limits_{i=1}^mr^\alpha
\lim\limits_{k\rightarrow\infty}A_{x,r}(f_{i,h_k}^l)\prod\limits_{\mu=1}^{i-1}A_{x,r}(f_{\tau(h_k)}^{\mu,l})
\prod\limits_{\nu=i+1}^{m}A_{x,r}(f_\nu)\\
&=\displaystyle\sum\limits_{i=1}^mr^\alpha A_{x,r}(D_lf_i)\prod\limits_{1\leq j\neq i\leq m}A_{x,r}(f_j).
\end{array}\eqno(5.22)$$
Combining (5.22) with (5.21) yields (5.19).

{\bf Case B} ($r=0$). We consider two cases: 

(i) $\alpha=0$.
We can write
$$\begin{array}{ll}
D_l\mathfrak{M}_{\Omega}(\vec{f})(x)&=\displaystyle\lim\limits_{k\rightarrow\infty}\frac{1}{h_k}(\mathfrak{M}_{\Omega}(\vec{f})(x+h_ke_l)
-\mathfrak{M}_{\Omega}(\vec{f})(x))\\
&\geq \displaystyle\lim\limits_{k\rightarrow\infty}\frac{1}{h_k}\Big(\prod\limits_{i=1}^mf_i(x+h_ke_l)-\prod\limits_{i=1}^mf_i(x)\Big)\\
&=\displaystyle\sum\limits_{i=1}^m D_lf_i(x)\prod\limits_{1\leq j\neq i\leq m}f_j(x).
\end{array}\eqno(5.23)$$
If we have $r_k=0$ for infinitely many $k$, then
$$\begin{array}{ll}
D_l\mathfrak{M}_{\Omega}(\vec{f})(x)&=\displaystyle\lim\limits_{k\rightarrow\infty}\frac{1}{h_k}(\mathfrak{M}_{\Omega}(\vec{f})(x+h_ke_l)
-\mathfrak{M}_{\Omega}(\vec{f})(x))\\
&=\displaystyle\lim\limits_{k\rightarrow\infty}\frac{1}{h_k}\Big(\prod\limits_{i=1}^mf_i(x+h_ke_l)-\prod\limits_{i=1}^mf_i(x)\Big)\\
&=\displaystyle\sum\limits_{i=1}^m D_lf_i(x)\prod\limits_{1\leq j\neq i\leq m}f_j(x).
\end{array}$$
If there exists $k_0\in\mathbb{N}$ such that $r_k>0$
when $k\geq k_0$. We get from (5.21) that
$$D_l\mathfrak{M}_\Omega(\vec{f})(x)\leq\displaystyle\sum\limits_{i=1}^m\lim\limits_{k\rightarrow\infty}A_{x,r_k}(f_{i,h_k}^l)
\prod\limits_{\mu=1}^{i-1}A_{x,r_k}(f_{\tau(h_k)}^{\mu,l})\prod\limits_{\nu=i+1}^{m}A_{x,r_k}(f_\nu).
\eqno(5.24)$$
Since
$$\lim\limits_{k\rightarrow\infty}A_{x,r_k}(f_{j,h_k}^l)=\lim\limits_{k\rightarrow\infty}A_{x,r_k}(f_{j,h_k}^l-D_lf_j)
+\lim\limits_{k\rightarrow\infty}A_{x,r_k}(D_lf_j)\eqno(5.25)$$
and
$$|\lim\limits_{k\rightarrow\infty}A_{x,r_k}(f_{j,h_k}^l-D_lf_j)|
\leq\lim\limits_{k\rightarrow\infty}M_\Omega(f_{j,h_k}^l-D_lf_j)(x)=0.\eqno(5.26)$$
Form (5.25)-(5.26), we have
$$\lim\limits_{k\rightarrow\infty}A_{x,r_k}(f_{j,h_k}^l)=D_lf_j(x).\eqno(5.27)$$
Similarly, we have
$$\lim\limits_{k\rightarrow\infty}A_{x,r_k}(f_{\tau(h_k)}^{j,l})=f_j(x).\eqno(5.28)$$
It follows from (5.24) and (5.27)-(5.28) that
$$D_l\mathfrak{M}_\Omega(\vec{f})(x)\leq\sum\limits_{i=1}^m D_lf_i(x)\prod\limits_{1\leq j\neq i\leq m}f_j(x),$$
which together with (5.23) implies that
$$D_l\mathfrak{M}_\Omega(\vec{f})(x)=\sum\limits_{i=1}^m D_lf_i(x)\prod\limits_{1\leq j\neq i\leq m}f_j(x).$$

(ii) $0<\alpha<md$. One can easily check that
$\mathfrak{M}_{\alpha,\Omega}(\vec{f})(x)=0$. It follows
that
$$D_l\mathfrak{M}_{\alpha,\Omega}(\vec{f})(x)\geq\lim\limits_{k\rightarrow\infty}\frac{1}{h_k}\mathfrak{M}_{\alpha,\Omega}(\vec{f})(x+h_ke_l)\geq0.\eqno(5.29)$$
If we have $r_k=0$ for infinitely many $k$, we can conclude
that
$$D_l\mathfrak{M}_{\alpha,\Omega}(\vec{f})(x)=\lim\limits_{k\rightarrow\infty}\frac{1}{h_k}(\mathfrak{M}_{\alpha,\Omega}(\vec{f})(x+h_ke_l)
-\mathfrak{M}_{\alpha,\Omega}(\vec{f})(x))=0.$$
If there exists $k_0\in\mathbb{N}$ such that $r_k>0$ when
$k\geq k_0$. We get from (5.21) that
$$D_l\mathfrak{M}_{\alpha,\Omega}(\vec{f})(x)\leq\sum\limits_{i=1}^m\lim\limits_{k\rightarrow\infty}r_k^\alpha A_{x,r_k}(f_{i,h_k}^l)
\prod\limits_{\mu=1}^{i-1}A_{x,r_k}(f_{\tau(h_k)}^{\mu,l})\prod\limits_{\nu=i+1}^{m}A_{x,r_k}(f_\nu).$$
Combining this inequality with (5.27)-(5.28) implies that
$$D_l\mathfrak{M}_{\alpha,\Omega}(\vec{f})(x)\leq 0.$$
This together with (5.29) yields that
$$D_l\mathfrak{M}_{\alpha,\Omega}(\vec{f})(x)=0.$$
This proves (5.19) and (5.20) for a.e. $x\in K$.
Since $K\subset\subset\Omega$ is arbitrary, this gives
the claim in $\Omega$. 
\end{proof}

\begin{lemma}[Lemma 2.11, \cite{Lu2}]\label{l:5.4}
Let $A_j\subset\mathbb{R}^n$ be measurable sets
and let $h_k\in\mathbb{R}^n$ such that $|h_k|\rightarrow0$
when $k\rightarrow\infty$. Then we can find a subsequence
of $\{h_{k_i}\}$ such that for every $j$ and for almost
every $x\in A_j$ we have $x+h_{k_i}\in A_j$ when $i$ is
large enough.
\end{lemma}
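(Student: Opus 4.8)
The plan is to reduce the statement to the Borel--Cantelli lemma, using the continuity of translations in $L^1$. The point is that, for a fixed $j$, the failure of the conclusion ``$x\in A_j$ and $x+h_{k_i}\in A_j$'' is exactly the event $x\in A_j\setminus(A_j-h_{k_i})$. Hence it suffices to extract a subsequence $\{h_{k_i}\}$ for which $\sum_i|A_j\setminus(A_j-h_{k_i})|<\infty$ for every $j$: then for each $j$ the set of $x$ lying in infinitely many of the sets $A_j\setminus(A_j-h_{k_i})$ is null, and every $x\in A_j$ outside that null set satisfies $x+h_{k_i}\in A_j$ for all sufficiently large $i$, which is precisely the assertion.

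First I would dispose of the case of infinite measure. Since the conclusion is a statement holding for a.e.\ $x\in A_j$, and $A_j=\bigcup_{R\in\mathbb N}(A_j\cap B_R)$ is a countable union, it is enough to prove the lemma for the enlarged---still countable---family $\{A_j\cap B_R:j,R\in\mathbb N\}$, whose members all have finite measure; indeed, if for a.e.\ $x\in A_j\cap B_R$ one has $x+h_{k_i}\in A_j\cap B_R\subset A_j$ for all large $i$, then taking the union over $R$ recovers the claim for $A_j$. So I may assume $|A_j|<\infty$ for all $j$, whence $\chi_{A_j}\in L^1(\mathbb R^n)$, and by continuity of translation in $L^1$,
$$|A_j\triangle(A_j-h)|=\|\chi_{A_j}(\cdot+h)-\chi_{A_j}\|_{L^1(\mathbb R^n)}\longrightarrow 0\quad\text{as }|h|\to0,$$
and in particular $|A_j\setminus(A_j-h)|\to0$ as $|h|\to0$.

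Next I would run a diagonal extraction. Since $|h_k|\to0$, I can pass to a subsequence along which $|A_1\setminus(A_1-h_{k_i})|\le 2^{-i}$ at the $i$-th term; inside it pass to a further subsequence controlling $A_2$ in the same way; and so on. The diagonal subsequence, relabelled $\{h_{k_i}\}$, then satisfies $|A_j\setminus(A_j-h_{k_i})|\le 2^{-i}$ for all $i\ge j$, so $\sum_i|A_j\setminus(A_j-h_{k_i})|<\infty$ for each $j$. By the Borel--Cantelli lemma the set $N_j:=\limsup_i\big(A_j\setminus(A_j-h_{k_i})\big)$ is null, and for any $x\in A_j\setminus N_j$ there is $i_0=i_0(x)$ with $x\notin A_j\setminus(A_j-h_{k_i})$, i.e.\ $x+h_{k_i}\in A_j$, for all $i\ge i_0$. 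This gives the lemma.

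The only delicate point---and the place I would expect to be the mild obstacle---is keeping the quantifiers straight: the threshold $i_0$ beyond which $x+h_{k_i}\in A_j$ depends on $x$, and the exceptional null set $N_j$ depends on $j$. Both are harmless because the family $\{A_j\}$ (after the localization step, $\{A_j\cap B_R\}$) is countable, so a single null set is produced per index and Borel--Cantelli handles each separately. Everything else is routine: the $L^1$-continuity of translations and a standard diagonal argument.
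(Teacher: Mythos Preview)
Your argument is correct. The paper does not supply its own proof of this lemma; it is quoted verbatim as Lemma~2.11 of \cite{Lu2} and simply invoked where needed. So there is no in-paper proof to compare against.

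On the substance: your reduction to sets of finite measure by replacing $\{A_j\}$ with the countable family $\{A_j\cap B_R\}_{j,R\in\mathbb N}$ is clean, and the identification of the bad event as $x\in A_j\setminus(A_j-h_{k_i})$ is exactly right. The $L^1$-continuity of translations gives $|A_j\setminus(A_j-h)|\to0$, the diagonal extraction produces a single subsequence with $\sum_i|A_j\setminus(A_j-h_{k_i})|<\infty$ for every $j$, and Borel--Cantelli finishes. Your remarks about the dependence of the threshold $i_0$ on $x$ and of the null set $N_j$ on $j$ are accurate and do not cause any trouble. This is the standard proof one would expect for such a statement.
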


\begin{lemma}\label{l:5.5}
Let $1\leq\alpha<mn$
and $K\subset\subset\Omega$. Let $\vec{f}=(f_1,\ldots,f_m)$
and $\vec{f_j}=(f_{1,j},\ldots,f_{m,j})$. Suppose that one
of the following conditions holds:
\begin{enumerate}
\item[{(i)}] $\alpha=0$, $f_{i,j}\rightarrow f_i$ in $W^{1,p_i}
(\Omega)$ as $j\rightarrow\infty$ for $1<p_i<\infty$,
$\frac{1}{q}=\frac{1}{p_1}+\ldots+\frac{1}{p_m}$ with $1<q<\infty$;

\item[{(ii)}] $1\leq\alpha<mn$, $f_{i,j}\rightarrow f_i$ in
$W^{1,p_i}(\Omega)$ as $j\rightarrow\infty$ for $1<p_i<\infty$,
$0<\frac{1}{q}=\frac{1}{p_1}+\ldots+\frac{1}{p_m}-
\frac{\alpha-1}{n}<1$ and $|\Omega|<\infty$.
\end{enumerate}
Then for all $1\leq l\leq n$, we have
$$\lim\limits_{j\rightarrow\infty}\|D_l\mathfrak{M}_{\alpha,\Omega}(\vec{f_j})-D_l\mathfrak{M}_{\alpha,\Omega}(\vec{f})\|_{L^q(K_j)}= 0,\eqno(5.30)$$
where
$K_j:=\{x\in K:\delta(x)\in\mathcal{R}_\alpha(\vec{f_j})(x)\cap\mathcal{R}_\alpha(\vec{f})(x)\}.$
\end{lemma}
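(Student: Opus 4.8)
The plan is to observe that on $K_j$ both maximal functions coincide with the endpoint average over the ball $B(x,\delta(x))$, whose gradient admits an explicit multilinear formula, and then to deduce the required $L^q$‑convergence by telescoping that formula. As in the proofs of Section 3 we may assume all $f_i\ge 0$ and $f_{i,j}\ge 0$. Extend the operator $\mathcal{A}_t^\alpha$ to the endpoint $t=1$ by $\mathcal{A}_1^\alpha(\vec g)(x)=\delta(x)^\alpha\prod_{i=1}^m A_{x,\delta(x)}(g_i)$, which is exactly $u_{x,\vec g,\alpha}(\delta(x))$. Since $u_{x,\vec g,\alpha}$ is continuous on $(0,\delta(x)]$, we have $\mathfrak{M}_{\alpha,\Omega}(\vec g)(x)=\sup_{0<r\le\delta(x)}u_{x,\vec g,\alpha}(r)\ge\mathcal{A}_1^\alpha(\vec g)(x)$ for every $x\in\Omega$, with equality precisely when $\delta(x)\in\mathcal{R}_\alpha(\vec g)(x)$. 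Hence $K_j=\{x\in K:\mathfrak{M}_{\alpha,\Omega}(\vec f_j)(x)=\mathcal{A}_1^\alpha(\vec f_j)(x)\}\cap\{x\in K:\mathfrak{M}_{\alpha,\Omega}(\vec f)(x)=\mathcal{A}_1^\alpha(\vec f)(x)\}$, which is measurable, and $\mathfrak{M}_{\alpha,\Omega}(\vec f_j)=\mathcal{A}_1^\alpha(\vec f_j)$, $\mathfrak{M}_{\alpha,\Omega}(\vec f)=\mathcal{A}_1^\alpha(\vec f)$ hold on $K_j$.

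The second step is to show that, for every $K\subset\subset\Omega$, $\mathcal{A}_1^\alpha(\vec g)\in W^{1,q}(K)$ with weak gradient equal to the $t=1$ analogue of the identity (3.7), and therefore majorized a.e.\ by $\alpha\mathfrak{M}_{\alpha-1,\Omega}(\vec g)+2\sum_{l=1}^m\mathfrak{M}_{\alpha,\Omega}(\vec g^{l})$. Since $B(x,\delta(x))$ is not compactly contained in $\Omega$, $\mathcal{A}_1^\alpha$ cannot be differentiated directly; instead one lets $t\uparrow 1$ in Lemma \ref{l:3.4} (case (ii), where $|\Omega|<\infty$ is used) or Lemma \ref{l:3.6} (case (i)): $\mathcal{A}_t^\alpha(\vec g)\to\mathcal{A}_1^\alpha(\vec g)$ pointwise, the right‑hand side of (3.7) converges pointwise by dominated convergence on the increasing balls, and the bound (3.1) (resp.\ (3.18)) holds uniformly in $t$ with a majorant lying in $L^q(\Omega)$ — in case (ii) the finiteness $|\Omega|<\infty$ lets one pass from the exponent $q^{\ast}$ of $\mathfrak{M}_{\alpha,\Omega}(\vec g^{l})$, where $1/q^{\ast}=\sum_i 1/p_i-\alpha/n$ and $q^{\ast}>q$, down to $q$ — so a subsequence of $\nabla\mathcal{A}_{t_k}^\alpha(\vec g)$ converges weakly in $L^q(K)$; its weak limit coincides with the pointwise limit, which gives the gradient formula, and Proposition \ref{p:3.3} gives the majorization. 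As $\mathfrak{M}_{\alpha,\Omega}(\vec f_j),\,\mathfrak{M}_{\alpha,\Omega}(\vec f)\in W^{1,q}(\Omega)$ (Theorem \ref{t:alpha=0pointwise} in case (i); Theorem \ref{t:alpha>0pointwise}(i) in case (ii)) and two $W^{1,q}$ functions that agree on a measurable set have equal gradients a.e.\ on it (\cite{GT}), it follows that $\nabla\mathfrak{M}_{\alpha,\Omega}(\vec f_j)=\nabla\mathcal{A}_1^\alpha(\vec f_j)$ and $\nabla\mathfrak{M}_{\alpha,\Omega}(\vec f)=\nabla\mathcal{A}_1^\alpha(\vec f)$ a.e.\ on $K_j$.

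It then suffices, since $K_j\subset K$, to prove that $\|\nabla\mathcal{A}_1^\alpha(\vec f_j)-\nabla\mathcal{A}_1^\alpha(\vec f)\|_{L^q(K)}\to 0$ as $j\to\infty$. I would obtain this by telescoping the explicit formula for $\nabla\mathcal{A}_1^\alpha$: the difference is a finite sum of products, each consisting of powers of $\delta(x)$ (all bounded above and away from $0$ on $K$, since $0<\dist(\overline K,\Omega^c)\le\delta\le C_K$ there), the bounded factor $\nabla\delta(x)/\delta(x)$, and $m$ ball averages $A_{x,\delta(x)}(\cdot)$ of functions among $f_i,\,f_{i,j},\,\nabla f_l,\,\nabla f_{l,j}$ — with $A_{x,\delta(x)}(\nabla f_l\cdot(\cdot-x))$ dominated by $\delta(x)\,A_{x,\delta(x)}(|\nabla f_l|)$ — exactly one of which carries a difference such as $f_{i,j}-f_i$ or $\nabla f_{l,j}-\nabla f_l$. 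Writing $A_{x,\delta(x)}(h)\le \dist(\overline K,\Omega^c)^{-\gamma}M_{\gamma,\Omega}h(x)$ on $K$ and distributing the fractional order $\alpha-1$ among the averages (possible since $\sum_i 1/p_i>(\alpha-1)/n$ in case (ii), while in case (i) $\alpha=0$ and no weights are needed), H\"older's inequality together with Lemmas \ref{l:3.1}(i) and \ref{l:3.2} places each term in $L^q(\Omega)$ and bounds it by a constant times a product of fixed norms $\|f_i\|_{W^{1,p_i}}$ (and $\|\nabla f_l\|_{L^{p_l}}$) times one factor $\|f_{i,j}-f_i\|_{L^{p_i}}$ or $\|\nabla f_{l,j}-\nabla f_l\|_{L^{p_l}}$, which tends to $0$. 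Summing over the finitely many terms yields the conclusion.

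The main obstacle is the endpoint regularity of $\mathcal{A}_1^\alpha$ in the second step: the averaging balls $B(x,\delta(x))$ reach $\partial\Omega$, so Lemma \ref{l:3.4} is not directly applicable and one must argue by the limit $t\uparrow 1$, using the uniform majorant and weak compactness; the remainder is routine bookkeeping with the maximal‑function bounds of Lemmas \ref{l:3.1} and \ref{l:3.2}.
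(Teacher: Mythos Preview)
Your proof is correct, and the core idea coincides with the paper's: on $K_j$ both maximal functions equal the endpoint average $\mathcal{A}_1^\alpha$, so the problem reduces to the $L^q$-continuity of $\nabla\mathcal{A}_1^\alpha$, which follows by telescoping the multilinear structure and invoking the gradient bounds of Lemmas \ref{l:3.4} and \ref{l:3.6}. The final estimates you arrive at are exactly the quantities $E_{i,j}$ and $G_{i,j}$ appearing in the paper's proof.

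Where you differ is in the mechanism that identifies $\nabla\mathfrak{M}_{\alpha,\Omega}(\vec f_j)$ with $\nabla\mathcal{A}_1^\alpha(\vec f_j)$ on $K_j$. The paper works with difference quotients: it uses Lemma \ref{l:5.4} to arrange that $x+s_ke_l\in K_j$ for a.e.\ $x\in K_j$, writes the difference quotient of $\mathfrak{M}_{\alpha,\Omega}(\vec f_j)-\mathfrak{M}_{\alpha,\Omega}(\vec f)$ as a telescoped sum of difference quotients of $u_{\cdot,\vec F_j^i,\alpha}(\delta(\cdot))$, and then approximates $\delta$ by $r_\ell\delta$ with $r_\ell\uparrow 1$, interchanging the limits in $k$ and $\ell$ to land inside the regime of Lemma \ref{l:3.4}/\ref{l:3.6}. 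You instead establish once and for all that $\mathcal{A}_1^\alpha(\vec g)\in W^{1,q}$ (via weak compactness of $\mathcal{A}_t^\alpha(\vec g)$ as $t\uparrow 1$ under the uniform majorant (3.1)/(3.18)) and then invoke the standard fact that two Sobolev functions agreeing on a measurable set have equal weak gradients a.e.\ on it. This is cleaner: it bypasses Lemma \ref{l:5.4} entirely and avoids the somewhat delicate limit interchange in the paper's (5.35)/(5.37). One small remark: in Step 4 you telescope the explicit identity (3.7) at $t=1$; the paper only records (3.7) for smooth tuples, so it is tidier (and sufficient) to telescope $\mathcal{A}_1^\alpha(\vec f_j)-\mathcal{A}_1^\alpha(\vec f)$ \emph{before} taking gradients, by multilinearity, and then apply the bound (3.1)/(3.18) to each summand $\mathcal{A}_1^\alpha(\vec F_j^i)$ --- this yields precisely the majorants $E_{i,j}$, $G_{i,j}$ without appealing to the formula itself.
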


\begin{proof}
Since $\mathfrak{M}_{\alpha,
\Omega}(\vec{f_j})-\mathfrak{M}_{\alpha,\Omega}(\vec{f})\in
W^{1,q}(\Omega)$ for $q>1$. Thus we can choose a sequence
$\{h_k\}_{k=1}^\infty$, $h_k\rightarrow 0^{+}$ such that
for all $j\geq1$ and $1\leq l\leq n$ we have
$$\frac{\mathfrak{M}_{\alpha,\Omega}(\vec{f_j})(x+h_k e_l)-\mathfrak{M}_{\alpha,\Omega}(\vec{f})(x)}{h_k}\rightarrow D_l(\mathfrak{M}_{\alpha,\Omega}(\vec{f_j})-\mathfrak{M}_{\alpha,\Omega}(\vec{f}))(x)\ \ {\rm as}\ k\rightarrow\infty$$
for almost every $x\in K$. Fix $1\leq l\leq n$. By Lemma
\ref{l:5.4}, there exists a subsequence $\{s_k\}_{k=1}^\infty$ of
$\{h_k\}_{k=1}^\infty$, $s_k\rightarrow0$ as $k\rightarrow
\infty$ such that for almost every $x\in K_j$, we have
$x+s_k e_l\in K_j$ for all $j$ when $k$ is large enough.
Obviously, $\mathfrak{M}_{\alpha,\Omega}(\vec{f_j})(x)=
u_{x,\vec{f_j},\alpha}(\delta(x))$ and $\mathfrak{M}_{\alpha,
\Omega}(\vec{f})(x)=u_{x,\vec{f},\alpha}(\delta(x))$.
Thus we have
$$\begin{array}{ll}
\mathfrak{M}_{\alpha,\Omega}(\vec{f_j})(x)-\mathfrak{M}_{\alpha,\Omega}(\vec{f})(x)
&=\displaystyle\delta(x)^\alpha\Big(\prod\limits_{i=1}^mA_{x,\delta(x)}(f_{i,j})
-\prod\limits_{i=1}^mA_{x,\delta(x)}(f_{i})\Big)\\
&=\displaystyle\sum\limits_{i=1}^{m}u_{x,\vec{F_j^i},\alpha}(\delta(x)),
\end{array}\eqno(5.31)$$
where $\vec{F_j^i}$ is given as in (5.8). Similarly we have
$$\mathfrak{M}_{\alpha,\Omega}(\vec{f_j})(x+s_ke_l)-\mathfrak{M}_{\alpha,\Omega}(\vec{f})(x+s_ke_l)
=\sum\limits_{i=1}^mu_{x+s_ke_l,\vec{F_j^i},\alpha}(\delta(x+s_ke_l)).\eqno(5.32)$$
We get from (5.31)-(5.32) that
$$\begin{array}{ll}
&|D_l\mathfrak{M}_{\alpha,\Omega}(\vec{f_j})(x)-D_l\mathfrak{M}_{\alpha,\Omega}(\vec{f})(x)|\\
&\leq\displaystyle\sum\limits_{i=1}^m\Big|\lim\limits_{k\rightarrow\infty}
\frac{u_{x+s_ke_l,\vec{F_j^i},\alpha}(\delta(x+s_ke_l))-u_{x,\vec{F_j^i},\alpha}(\delta(x))}{s_k}\Big|
\end{array}\eqno(5.33)$$
for almost every $x\in K_j$. By the continuity of
$u_{x,\vec{F_j^i},\alpha}(r)$. For almost every
$x\in\Omega$, there exists a sequence of numbers
$\{r_\ell\}_{\ell=1}^\infty$, $r_\ell>0$, $r_\ell\rightarrow1$
as $\ell\rightarrow\infty$ such that
$$u_{x,\vec{F_j^i},\alpha}(\delta(x))=\lim\limits_{\ell\rightarrow\infty}u_{x,\vec{F_j^i},\alpha}(r_\ell\delta(x))
=\lim\limits_{\ell\rightarrow\infty}\mathcal{A}_{r_\ell}^\alpha(\vec{F_j^i})(x).\eqno(5.34)$$
We consider the following three cases:

{\bf Case 1.} Assume (i) hold. By Lemma \ref{l:3.6} we have
$\mathcal{A}_{r_\ell}(\vec{F_j^i})\in W^{1,q}(\Omega)$.
Thus we can choose a subsequence $\{\iota_k\}_{k=1}^\infty$
of $\{s_k\}_{k=1}^\infty$, $\iota_k\rightarrow 0^{+}$ such
that
$$\frac{\mathcal{A}_{r_\ell}(\vec{F_j^i})(x+\iota_ke_l)-\mathcal{A}_{r_\ell}(\vec{F_j^i})(x)}{\iota_k}\rightarrow D_l(\mathcal{A}_{r_\ell}^\alpha(\vec{F_j^i}))(x)\ \ {\rm as}\ k\rightarrow\infty$$
for all $j,\,\ell\geq1$ $1\leq l\leq n$ and almost every
$x\in K$. Using (5.33)-(5.34) and Lemma \ref{l:3.6} again, for almost every $x\in K_j$, we obtain
that
$$\begin{array}{ll}
&|D_l\mathfrak{M}_{\Omega}(\vec{f_j})(x)-D_l\mathfrak{M}_{\Omega}(\vec{f})(x)|\\
&\leq\displaystyle\sum\limits_{i=1}^m\Big|\lim\limits_{k\rightarrow\infty}\lim\limits_{\ell\rightarrow\infty}
\frac{\mathcal{A}_{r_\ell}(\vec{F_j^i})(x+\iota_ke_l)-\mathcal{A}_{r_\ell}(\vec{F_j^i})(x)}{\iota_k}\Big|\\
&\leq\displaystyle\sum\limits_{i=1}^m\lim\limits_{\ell\rightarrow\infty}\Big|\lim\limits_{k\rightarrow\infty}
\frac{\mathcal{A}_{r_\ell}(\vec{F_j^i})(x+\iota_ke_l)-\mathcal{A}_{r_\ell}(\vec{F_j^i})(x)}{\iota_k}\Big|\\
&\leq\displaystyle2\sum\limits_{i=1}^{m}|D_l(\mathcal{A}_{r_\ell}(\vec{F_j^i}))(x)|\\
&\leq\displaystyle2\Big(\sum\limits_{\mu=1}^{i-1}\mathfrak{M}_{\Omega}(\vec{F^{i,\mu,j}})(x)+\mathfrak{M}_{\Omega}(\vec{F_{i,j}})(x)
+\sum\limits_{\nu=i+1}^{m}\mathfrak{M}_{\Omega}(\vec{F_{i,\nu,j}})(x)\Big)\\
&=:E_{i,j}(x)
\end{array}\eqno(5.35)$$
 where
$$\vec{F^{i,\mu,j}}=(f_1,\ldots,f_{\mu-1},|\nabla f_\mu|,f_{\mu+1},\ldots,f_{i-1},f_{i,j}-f_i,f_{i+1,j},\ldots,f_{m,j}),$$
$$\vec{F_{i,j}}=(f_1,\ldots,f_{i-1},|\nabla(f_{i,j}-f_i)|,f_{i+1,j},\ldots,f_{m,j}),$$
$$\vec{F_{i,\nu,j}}=(f_1,\ldots,f_{i-1},f_{i,j}-f_i,f_{i+1,j},\ldots,f_{\nu-1,j},|\nabla f_{\nu,j}|,f_{\nu+1,j},\ldots,f_{m,j}).$$
(5.30) follows form (5.35), Minkowski's inequality and
Lemma 3.2.

{\bf Case 2.}\quad Assume that (ii) holds. By Lemma \ref{l:3.4}
we have $\mathcal{A}_{r_\ell}^\alpha(\vec{F_j^i})\in
W^{1,q}(\Omega)$. Thus we can choose a subsequence
$\{\iota_k\}_{k=1}^\infty$ of $\{s_k\}_{k=1}^\infty$,
$\iota_k\rightarrow 0^{+}$ such that for all
$j,\,\ell\geq1$ and $1\leq l\leq n$ we have that
$$\frac{\mathcal{A}_{r_\ell}^\alpha(\vec{F_j^i})(x+\iota_ke_l)-\mathcal{A}_{r_\ell}^\alpha(\vec{F_j^i})(x)}{\iota_k}\rightarrow D_l(\mathcal{A}_{r_\ell}^\alpha(\vec{F_j^i}))(x)\ \ {\rm as}\ k\rightarrow\infty$$
for almost every $x\in K$. By Lemma \ref{l:3.4} again, for almost every $x\in\Omega$, we have
$$\begin{array}{ll}
&\displaystyle\Big|\lim\limits_{k\rightarrow\infty}\frac{\mathcal{A}_{r_\ell}^\alpha(\vec{F_j^i})(x+\iota_ke_l)
-\mathcal{A}_{r_\ell}^\alpha(\vec{F_j^i})(x)}{\iota_k}\Big|\\
&\leq\displaystyle\alpha\mathfrak{M}_{\alpha-1,\Omega}(\vec{F_j^i})(x)
+2\Big(\sum\limits_{\mu=1}^{i-1}\mathfrak{M}_{\alpha,\Omega}(\vec{F^{i,\mu,j}})(x)+\mathfrak{M}_{\alpha,\Omega}(\vec{F_{i,j}})(x)
+\sum\limits_{\nu=i+1}^{m}\mathfrak{M}_{\alpha,\Omega}(\vec{F_{i,\nu,j}})(x)\Big)\\
&=:G_{i,j}(x).
\end{array}\eqno(5.36)$$
For almost every $x\in K_j$, it follows from (5.33)-(5.34)
and (5.36) that
$$\begin{array}{ll}
&|D_l\mathfrak{M}_{\alpha,\Omega}(\vec{f_j})(x)-D_l\mathfrak{M}_{\alpha,\Omega}(\vec{f})(x)|\\
&\leq\displaystyle\sum\limits_{i=1}^m\Big|\lim\limits_{k\rightarrow\infty}\lim\limits_{\ell\rightarrow\infty}
\frac{\mathcal{A}_{r_\ell}^\alpha(\vec{F_j^i})(x+\iota_ke_l)-\mathcal{A}_{r_\ell}^\alpha(\vec{F_j^i})(x)}{\iota_k}\Big|\\
&\leq\displaystyle\sum\limits_{i=1}^m\lim\limits_{\ell\rightarrow\infty}\Big|\lim\limits_{k\rightarrow\infty}
\frac{\mathcal{A}_{r_\ell}^\alpha(\vec{F_j^i})(x+\iota_ke_l)-\mathcal{A}_{r_\ell}^\alpha(\vec{F_j^i})(x)}{\iota_k}\Big|\\
&\leq\displaystyle\sum\limits_{i=1}^mG_{i,j}(x).
\end{array}\eqno(5.37)$$
The fact that $|\Omega|<\infty$, together with
Lemma 3.2, Minkowski's inequality and H\"{o}lder's inequality gives that
$\|G_{i,j}\|_{L^q(K_j)}\rightarrow0\ \ {\rm as}\ \ j\rightarrow\infty.$
This together with (5.37) yields (5.30). This completes
the proof of Lemma \ref{l:5.5}. 
\end{proof}

\begin{lemma}[Lemma 2.9, \cite{Lu2}]\label{l:5.6}
Let $f\in W^{1,p}(\Omega)$ for $1<p<\infty$. Let $r_k$ and
$h_k$ be positive real numbers so that $h_k\rightarrow 0$,
$r_k\leq\delta(x)$ for every $k$ and $r_k\rightarrow\delta(x)$
as $k\rightarrow\infty$. Moreover, assume that
$r_k\leq\delta(x+h_ke_l)$ for all $k$ and some $1\leq l\leq n$. Then, it holds that
$$\lim\limits_{k\rightarrow\infty}\frac{1}{|B(x,r_k)|}\int_{B(x,r_k)}\frac{f(y+h_ke_l)-f(y)}{h_k}dy
=\frac{1}{|B(x,\delta(x))|}\int_{B(x,\delta(x))}D_lf(y)dy.$$
\end{lemma}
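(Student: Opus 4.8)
The plan is to subtract the right‑hand side and control the difference term by term after isolating the ``radius mismatch''. Writing $A_{x,r}(g)=|B(x,r)|^{-1}\int_{B(x,r)}g$, I would split
$$A_{x,r_k}\!\Big(\tfrac{f(\cdot+h_ke_l)-f(\cdot)}{h_k}\Big)-A_{x,\delta(x)}(D_lf)=\underbrace{A_{x,r_k}\!\Big(\tfrac{f(\cdot+h_ke_l)-f(\cdot)}{h_k}-D_lf\Big)}_{I_k}\;+\;\underbrace{A_{x,r_k}(D_lf)-A_{x,\delta(x)}(D_lf)}_{II_k}.$$
Since $x\in\Omega$ (and $\Omega\neq\mathbb R^n$) we have $0<\delta(x)<\infty$, so $B(x,\delta(x))$ is bounded, $D_lf\in L^1(B(x,\delta(x)))$, and $|B(x,r_k)|\to|B(x,\delta(x))|\in(0,\infty)$; hence $II_k\to0$ by dominated convergence, with dominating function $|D_lf|\chi_{B(x,\delta(x))}$, using $r_k\le\delta(x)$ and $r_k\to\delta(x)$. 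Everything then reduces to showing $I_k\to0$.

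First I would exploit that $\delta$ is $1$‑Lipschitz to get, for $0\le t\le h_k$,
$$\delta(x+te_l)\ \ge\ \max\{\delta(x)-t,\ \delta(x+h_ke_l)-(h_k-t)\}\ \ge\ r_k-\tfrac{h_k}{2},$$
because $r_k\le\min\{\delta(x),\delta(x+h_ke_l)\}$. Putting $\rho_k:=r_k-h_k$ (still $\rho_k\to\delta(x)$), the tube $\bigcup_{0\le t\le h_k}B(x+te_l,\rho_k)$ lies inside $\Omega$; hence for a.e. $y\in B(x,\rho_k)$ the whole segment $\{y+te_l:0\le t\le h_k\}$ stays in $\Omega$, and the absolute continuity of $f$ on lines (the ACL description of $W^{1,p}$) gives $f(y+h_ke_l)-f(y)=\int_0^{h_k}D_lf(y+te_l)\,dt$ there. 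Integrating over $B(x,\rho_k)$ and applying Fubini turns the $B(x,\rho_k)$‑part of $I_k$ into a $t$‑average of $\int_{B(x+te_l,\rho_k)}D_lf$; since $B(x+te_l,\rho_k)$ tends to $B(x,\delta(x))$ uniformly in $t\in[0,h_k]$ (shrinking symmetric differences) and $A\mapsto\int_A|D_lf|$ is absolutely continuous on a fixed bounded subset of $\Omega$, these averages converge to $A_{x,\delta(x)}(D_lf)$, and likewise $A_{x,\rho_k}(D_lf)\to A_{x,\delta(x)}(D_lf)$. So the $B(x,\rho_k)$‑part of $I_k$ vanishes in the limit, leaving the thin shell $S_k:=B(x,r_k)\setminus B(x,\rho_k)$ with $|S_k|=O(h_k)$.

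On $S_k$ the $D_lf$‑part is again $o(1)$ by absolute continuity. For the difference‑quotient part I would decompose $S_k$ by the $e_l$‑coordinate: an elementary check shows that when $(y-x)_l\le-h_k/2$ the segment $\{y+te_l\}$ remains inside $B(x,r_k)$, and when $(y-x)_l\ge h_k/2$ it remains inside $B(x+h_ke_l,r_k)$, both inside $\Omega$; so on $S_k\cap\{|(y-x)_l|\ge h_k/2\}$ the ACL representation together with a Fubini estimate bounds the contribution by $\lesssim|B(x,r_k)|^{-1}\|D_lf\|_{L^p(\Omega)}|S_k|^{1/p'}\to0$, and a similar estimate disposes of the remaining points of $S_k$ whose segments still lie in $\Omega$. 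What is genuinely left is an exceptional set: using \emph{both} $r_k\le\delta(x)$ and $r_k\le\delta(x+h_ke_l)$ one sees that a segment leaving $\Omega$ forces $|y-x|^2\ge r_k^2-2h_k^2$ and $|(y-x)_l|<h_k/2$, so this set lies inside an annular shell of width $O(h_k^2)$ meeting an equatorial slab of width $h_k$, of total measure $O(h_k^3)$; one then controls its contribution against the a priori bound $\|f(\cdot+h_ke_l)-f(\cdot)\|_{L^p(S_k)}\le 2\|f\|_{L^p(\Omega)}$ via Hölder, whereupon $I_k\to0$.

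I expect this last step — the equatorial sliver of $S_k$ — to be the main obstacle: there the difference quotient cannot be expressed as an integral of $D_lf$ along a segment (unlike on a convex domain the segment exits $\Omega$), the quotient has no uniform bound, and one must weigh the $O(h_k^3)$ smallness of this set against the $h_k^{-1}$ factor in the quotient; the Lipschitz property of $\delta$ is exactly what rescues the estimate, first by fitting the full sub‑tube of radius $\rho_k$ inside $\Omega$ and then by confining the unavoidable bad set to that sliver. Everything else — the decomposition, the dominated/absolutely‑continuous convergence for $II_k$ and for the $D_lf$‑parts, and the Fubini computation on $B(x,\rho_k)$ — is routine.
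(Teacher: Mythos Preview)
The paper does not supply its own proof of this lemma; it is simply quoted from \cite[Lemma~2.9]{Lu2}. So there is nothing in the paper to compare your attempt against, and I will just assess the argument on its merits.

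Your decomposition into $I_k+II_k$, the dominated-convergence argument for $II_k$, the ACL/Fubini computation on the sub-tube of radius $\rho_k=r_k-h_k$, and the treatment of $S_k\cap\{|(y-x)_l|\ge h_k/2\}$ are all correct. The gap is in the very last step. On the equatorial sliver you (correctly) bound the measure by $O(h_k^3)$ and then control the difference quotient by H\"older against the crude bound $\|f(\cdot+h_ke_l)-f\|_{L^p}\le 2\|f\|_{L^p(\Omega)}$. That yields
\[
\frac{1}{|B(x,r_k)|}\int_{\text{sliver}}|g_k|\ \lesssim\ \frac{1}{h_k}\,\|f\|_{L^p(\Omega)}\,\bigl(h_k^3\bigr)^{1/p'}\ \simeq\ h_k^{\,2-3/p},
\]
which tends to $0$ only when $p>3/2$. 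For $1<p\le 3/2$ the bound is useless (it even diverges when $p<3/2$), so the proof as written does not cover the full range $1<p<\infty$ asserted in the lemma and actually used later in the paper. The ``Lipschitz property of $\delta$'' buys you the $O(h_k^3)$ measure estimate, but that alone cannot beat the $h_k^{-1}$ in the quotient once $p'\ge 3$.

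The obstacle is genuine but repairable: on the sliver you must bring $\nabla f$, not $f$, into the estimate. For each $y$ there one can join $y$ to $y+h_ke_l$ by a path of length $\le 3h_k$ that stays inside $B(x,r_k)\cup B(x+h_ke_l,r_k)\subset\Omega$: move radially toward $x$ by $h_k$ into $B(x,\rho_k)$, translate by $h_ke_l$ inside your tube, then move radially back out inside $B(x+h_ke_l,r_k)$. The ACL representation along this detour gives $|g_k(y)|\le \tfrac{1}{h_k}\int_{\text{path}(y)}|\nabla f|\,d\ell$, and after a change of variables $\int_{\text{sliver}}|g_k|$ becomes an integral of $|\nabla f|$ over sets of measure $O(h_k^3)$ with \emph{no} surviving $h_k^{-1}$ prefactor; H\"older then gives $O(h_k^{3/p'})\to 0$ for every $p>1$. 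With this modification your outline goes through.
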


\begin{lemma}[Corollary 2.7, \cite{Lu2}).]\label{l:5.7}
 Let $1<p<\infty$ and $A$ be a measurable subset of $\Omega$.
Let $f_j$ be a sequence in $W_{\rm loc}^{1,1}(\Omega)$ so that
$f_j$ converges to zero in the sense of distributions :
$$\int_\Omega f_j(x)\varphi(x)dx\rightarrow 0\ \ {\rm as}\ j\rightarrow\infty \quad \hbox{for every\ } \varphi\in\mathcal{C}_0^\infty(\Omega).$$
Suppose
that $|\nabla f_j(x)|\leq F(x)+F_j(x)$ for almost every
$x\in\Omega$ and $\|F\|_{L^p(\Omega)}<\infty$ and
$\|F_j\|_{L^p(\Omega)}\rightarrow0$ as $j\rightarrow\infty$.
Suppose also that for all $\epsilon>0$ and $1\leq l\leq n$, it holds that
$|\{x\in A: D_l f_j(x)>\epsilon\}|\rightarrow0\ \ {\rm as}\ j\rightarrow\infty$
or
$|\{x\in A: D_l f_j(x)<-\epsilon\}|\rightarrow0\ \ {\rm as}\ j\rightarrow\infty.$
Then$$\lim\limits_{ j\rightarrow\infty}\|D_lf_j\|_{L^p(A)}=0.$$
\end{lemma}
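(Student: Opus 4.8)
The plan is to fix $l\in\{1,\dots,n\}$ and prove $\|D_lf_j\|_{L^p(A)}\to0$; since replacing $f_j$ by $-f_j$ leaves every hypothesis intact and interchanges the two alternatives for this $l$, I may assume $|\{x\in A:D_lf_j(x)>\epsilon\}|\to0$ for every $\epsilon>0$, and I may assume $F\ge0$, $F_j\ge0$ (replace them by their absolute values). The backbone of the argument is the elementary observation that if $0\le g_j$ on $A$, $g_j\to0$ in measure on $A$, and $g_j\le F+F_j$ a.e.\ on $A$, then $\|g_j\|_{L^p(A)}\to0$: indeed $\widetilde g_j:=(g_j-F_j)^+$ satisfies $0\le\widetilde g_j\le\min\{g_j,F\}$, hence $\widetilde g_j\to0$ in measure on $A$ and $\widetilde g_j^{\,p}\le F^p\in L^1(A)$, so $\int_A\widetilde g_j^{\,p}\to0$ by the generalized dominated convergence theorem (every subsequence of $\{\widetilde g_j^{\,p}\}$ has an a.e.-convergent further subsequence, to which ordinary dominated convergence applies, and this is valid even when $|A|=\infty$ since $F^p$ is integrable), and then $\|g_j\|_{L^p(A)}\le\|\widetilde g_j\|_{L^p(A)}+\|F_j\|_{L^p(A)}\to0$.

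First I would apply this to $g_j=(D_lf_j)^+$: the convergence in measure is exactly the reduced hypothesis and the domination is $(D_lf_j)^+\le|\nabla f_j|\le F+F_j$, so $\|(D_lf_j)^+\|_{L^p(A)}\to0$. Next I would show $D_lf_j\rightharpoonup0$ weakly in $L^p(\Omega)$. The bound $|D_lf_j|\le F+F_j$ shows $\{D_lf_j\}$ is bounded in the reflexive space $L^p(\Omega)$, so any subsequence has a weakly convergent further subsequence $D_lf_{j_k}\rightharpoonup v$ in $L^p(\Omega)$; for $\psi\in C_0^\infty(\Omega)$ one has $\int_\Omega D_lf_{j_k}\psi=-\int_\Omega f_{j_k}D_l\psi\to0$ by the assumed distributional convergence (test function $D_l\psi\in C_0^\infty(\Omega)$), hence $\int_\Omega v\psi=0$ for all such $\psi$, and density of $C_0^\infty(\Omega)$ in $L^{p'}(\Omega)$ forces $v=0$. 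Thus the whole sequence $D_lf_j\rightharpoonup0$ weakly in $L^p(\Omega)$, and restricting to $A$, weakly in $L^p(A)$; consequently $(D_lf_j)^-=(D_lf_j)^+-D_lf_j\rightharpoonup0$ weakly in $L^p(A)$, being the difference of an $L^p(A)$-null sequence and a weakly $L^p(A)$-null one.

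The step I expect to be the main obstacle is upgrading this weak nullity of $h_j:=(D_lf_j)^-\ge0$ to $\|h_j\|_{L^p(A)}\to0$, because a bounded weakly null sequence of nonnegative functions need not converge strongly; the leverage is the pointwise domination by the fixed function $F\in L^p$. I would first deduce $h_j\to0$ in measure on $A$: if not, there are $\epsilon,\delta>0$ and a subsequence with $|\{x\in A:h_{j_k}>\epsilon\}|\ge\delta$, and since $|\{F_{j_k}>\epsilon/2\}|\le(2/\epsilon)^p\|F_{j_k}\|_{L^p(A)}^p\to0$, for large $k$ the set $S_k:=\{x\in A:h_{j_k}>\epsilon\}\setminus\{F_{j_k}>\epsilon/2\}$ has $|S_k|\ge\delta/2$, and on $S_k$ the chain $\epsilon<h_{j_k}\le F+F_{j_k}\le F+\epsilon/2$ gives $F>\epsilon/2$, so $S_k\subset\{F>\epsilon/2\}$; as $\{F>\epsilon/2\}$ has finite measure, $\mathbf{1}_{\{F>\epsilon/2\}}\in L^{p'}(A)$, so weak nullity gives $\int_A h_{j_k}\,\mathbf{1}_{\{F>\epsilon/2\}}\to0$, while $\int_A h_{j_k}\,\mathbf{1}_{\{F>\epsilon/2\}}\ge\int_{S_k}h_{j_k}\ge\epsilon|S_k|\ge\epsilon\delta/2$, a contradiction. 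Hence $h_j\to0$ in measure on $A$, and since $h_j\le F+F_j$ the backbone observation yields $\|(D_lf_j)^-\|_{L^p(A)}\to0$; combining, $\|D_lf_j\|_{L^p(A)}\le\|(D_lf_j)^+\|_{L^p(A)}+\|(D_lf_j)^-\|_{L^p(A)}\to0$, as claimed.
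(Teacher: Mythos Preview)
Your proof is correct. The paper does not supply a proof of this lemma; it simply quotes the statement from Luiro's paper \cite{Lu2} (Corollary~2.7 there) and uses it as a black box, so there is no in-paper argument to compare against. Your approach---splitting $D_lf_j$ into positive and negative parts, handling $(D_lf_j)^+$ directly by the measure hypothesis and a dominated-convergence argument, then using the distributional convergence plus the $L^p$-bound $|\nabla f_j|\le F+F_j$ to obtain weak $L^p$-nullity of $D_lf_j$ and hence of $(D_lf_j)^-$, and finally promoting weak nullity to convergence in measure via the finite-measure superlevel set $\{F>\epsilon/2\}$---is self-contained and sound. The only places worth a second glance are the subsequence extraction for convergence in measure on a possibly infinite-measure set (your Borel--Cantelli-type reasoning handles this) and the use of $\mathbf{1}_{\{F>\epsilon/2\}}\in L^{p'}(A)$, which is fine since $p'<\infty$ and $|\{F>\epsilon/2\}|<\infty$; both are correctly addressed.
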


\subsection{Proofs of Theorems \ref{t:alpha=0continuous}-\ref{t:alpha>0continuous}}

\begin{proof}[ Proof of Theorem \ref{t:alpha=0continuous}.]
Let each $f_j\in W^{1,p_j}(\Omega)$. Let $\frac{1}{q}=\frac{1}{p_1}+\ldots+
\frac{1}{p_m}$ with $1<q<\infty$. For any $i=1,2,\ldots,m$,
let $f_{i,j}\rightarrow f_i$ in $W^{1,p_i}(\Omega)$ when
$j\rightarrow\infty$. We may assume that all $f_i\geq0$ and
$f_{i,j}\geq0$. For convenience, we denote by $\vec{f_j}=
(f_{1,j},\ldots,f_{m,j})$. For $1\le i\le m$, let
$$\vec{f^i}=(f_1,\ldots,f_{i-1},D_nf_i,f_{i+1},\ldots,f_m),$$
$$\vec{f_j^i}=(f_{1,j},\ldots,f_{i-1,j},D_nf_{i,j},f_{i+1,j},
\ldots,f_{m,j}),$$
$$\vec{g^i}=(f_1,\ldots,f_{i-1},|\nabla f_i|,f_{i+1},\ldots,f_m),$$
$$\vec{g_j^i}=(f_{1,j},\ldots,f_{i-1,j},|\nabla f_{i,j}|,f_{i+1,j},\ldots,f_{m,j}).$$
By (5.8) and Minkowski's inequality we have
$$\|\mathfrak{M}_\Omega(\vec{f_j})-\mathfrak{M}_\Omega(\vec{f})\|_{L^q(\Omega)}
\leq\displaystyle\sum\limits_{i=1}^m\prod\limits_{\mu=1}^{i-1}\|f_\mu\|_{L^{p_\mu}(\Omega)}
\|f_{i,j}-f_i\|_{L^{p_i}(\Omega)}\prod\limits_{\nu=i+1}^m\|f_{\nu,j}\|_{L^{p_\nu}(\Omega)},$$
where $\vec{F_j^i}$ is given as in (5.8). It follows that
$$\|\mathfrak{M}_\Omega(\vec{f_j})-\mathfrak{M}_\Omega(\vec{f})\|_{L^q(\Omega)}\rightarrow0\ \ {\rm as}\ j\rightarrow\infty.\eqno(5.38)$$
Therefore, it suffices to show that
$$\|D_l(\mathfrak{M}_\Omega(\vec{f_j})-\mathfrak{M}_\Omega(\vec{f}))\|_{L^q(\Omega)}\rightarrow 0\ \ {\rm when}\ j\rightarrow\infty\eqno(5.39)$$
for any $l=1,2,\ldots,n$. We will prove (5.39) for $l=n$ and
the other cases are analogous.

Fix $\epsilon>0$, there exists $K\subset\subset\Omega$
such that $\sum_{i=1}^m\|\mathfrak{M}_\Omega(\vec{g^i})
\|_{L^q(\Omega\backslash K)}<\epsilon$. By absolute
continuity, there exists $\eta>0$ such that $\sum_{i=1}^m
\|\mathfrak{M}_\Omega(\vec{g^i})\|_{L^q(A)}<\epsilon$
whenever $A$ is a measurable set such that $A\subset K$ and
$|A|<\eta$. By Theorem \ref{t:alpha=0pointwise} we have
$$\begin{array}{ll}
|\nabla(\mathfrak{M}_\Omega(\vec{f_j})-\mathfrak{M}_\Omega(\vec{f}))(x)|
&\leq\displaystyle2\sum\limits_{i=1}^m(\mathfrak{M}_\Omega(\vec{g^i})(x)+\mathfrak{M}_\Omega(\vec{g_j^i})(x))\\
&\leq\displaystyle 4\sum\limits_{i=1}^m\mathfrak{M}_\Omega(\vec{g^i})(x)+2\sum\limits_{i=1}^m|\mathfrak{M}_\Omega(\vec{g_j^i})(x)
-\mathfrak{M}_\Omega(\vec{g^i})(x)|\\
&=:\displaystyle 4\sum\limits_{i=1}^m\mathfrak{M}_\Omega(\vec{g^i})(x)+F_j(x)
\end{array}\eqno(5.40)$$
for almost every $x\in\Omega$. One can easily check that
$$F_j(x)\leq2\sum\limits_{i=1}^m\Big(\sum\limits_{\mu=1}^{i-1}\mathfrak{M}_\Omega(\vec{I_{\mu,j}})(x)
+\sum\limits_{\nu=i+1}^{m}\mathfrak{M}_\Omega(\vec{J_{\nu,j}})(x)+\mathfrak{M}_\Omega(\vec{K_{i,j}})(x)\Big),$$
where $\vec{I_{\mu,j}}=(f_1,\ldots,f_{\mu-1},f_{\mu,j}-f_{\mu},f_{\mu+1,j},\ldots,f_{i-1,j},|\nabla f_{i,j}|,f_{i+1,j}\ldots,f_{m,j}),$
$$\vec{J_{\nu,j}}=(f_1,\ldots,f_{i-1},|\nabla f_i|,f_{i+1},\ldots,f_{\nu-1},f_{\nu,j}-f_{\nu},f_{\nu+1,j},\ldots,f_{m,j}),$$
$$\vec{K_{i,j}}=(f_1,\ldots,f_{i-1},|\nabla(f_{i,j}-f_i)|,f_{i+1,j},\ldots,f_{m,j}).$$
It is easy to see that
$$\|F_j\|_{L^q(\Omega)}\rightarrow0\ \ {\rm as}\ j\rightarrow\infty.\eqno(5.41)$$
This yields that there exists $N_0\in\mathbb{N}\backslash\{0\}$
such that $\|F_j\|_{L^q(\Omega)}<\epsilon$ for all $j\geq N_0$,
which together with (5.40) and Minkowski's inequality implies
that
$$\begin{array}{ll}
\|D_n(\mathfrak{M}_\Omega(\vec{f_j})-\mathfrak{M}_\Omega(\vec{f}))\|_{L^q(\Omega\backslash K)}
&\leq\displaystyle\Big\|4\sum\limits_{i=1}^m\mathfrak{M}_\Omega(\vec{g^i})\Big\|_{L^q(\Omega\backslash K)}+\|F_j\|_{L^q(\Omega)}\\
&\leq\displaystyle4\sum\limits_{i=1}^m\|\mathfrak{M}_\Omega(\vec{g^i})\|_{L^q(\Omega\backslash K)}+\|F_j\|_{L^q(\Omega)}\leq 5\epsilon
\end{array}$$
for any $j\geq N_0$. It follows that
$$\|D_n(\mathfrak{M}_\Omega(\vec{f_j})-\mathfrak{M}_\Omega(\vec{f}))\|_{L^q(\Omega\backslash K)}\rightarrow 0\ \ {\rm as}\ j\rightarrow\infty.\eqno(5.42)$$
Thus, to prove (5.39), we only need to show that
$$\|D_n(\mathfrak{M}_\Omega(\vec{f_j})-\mathfrak{M}_\Omega(\vec{f}))\|_{L^q(K)}\rightarrow 0\ \ {\rm as}\ j\rightarrow\infty.\eqno(5.43)$$
Let
$$G=\{x\in K:\delta(x)\not\in\mathcal{R}(\vec{f})(x)\}.$$
To prove (5.43), it is enough to prove that
$$\|D_n(\mathfrak{M}_\Omega(\vec{f_j})-\mathfrak{M}_\Omega(\vec{f}))\|_{L^q(G)}\rightarrow 0\ \ {\rm as}\ j\rightarrow\infty,\eqno(5.44)$$
$$\|D_n(\mathfrak{M}_\Omega(\vec{f_j})-\mathfrak{M}_\Omega(\vec{f}))\|_{L^q(K\backslash G)}\rightarrow 0\ \ {\rm as}\ j\rightarrow\infty.\eqno(5.45)$$

{\bf Step 1. Proof of (5.44)}. Since the sets $\mathcal{R}
(\vec{f})(x)$ are compact, we can choose $\gamma>0$ such
that
$$|\{x\in G:\mathcal{R}(\vec{f})(x)\nsubseteq[0,\delta(x)-\gamma]\}|=:|A_\gamma|<\frac{\eta}{4}.\eqno(5.46)$$
As we already observed, for almost every $x\in\Omega$ and
$i=1,2,\ldots,m$, the function $u_{x,\vec{f^i}}$ is
uniformly continuous on $[0,\delta(x)]$. Thus, for almost
every $x\in\Omega$, the function $\sum_{i=1}^mu_{x,\vec{f^i}}$
is uniformly continuous on $[0,\delta(x)]$ and we can find
$\gamma(x)\in(0,\gamma)$ such that
$$\Big|\sum\limits_{i=1}^mu_{x,\vec{f^i}}(r_1)-\sum\limits_{i=1}^mu_{x,\vec{f^i}}(r_2)\Big|<\epsilon\ \ {\rm whenever}\ |r_1-r_2|<\gamma(x).$$
We can write $K$ as
$K=\Big(\bigcup\limits_{k=1}^\infty\Big\{x\in K;\frac{1}{k}<\gamma(x)<\gamma\Big\}\Big)\bigcup\mathcal{N},$
where $|\mathcal{N}|=0$. It follows that there exists
$\beta\in(0,\gamma)$ such that
$$\aligned\Big|\Big\{x\in K:\Big|\sum\limits_{i=1}^mu_{x,\vec{f^i}}(r_1)&-\sum\limits_{i=1}^mu_{x,\vec{f^i}}(r_2)\Big|\geq\epsilon\ {\rm for\ some}\ r_1,r_2\ {\rm with}\ |r_1-r_2|<\beta\Big\}\Big|\\&=:|A_\beta|<\frac{\eta}{4}.\endaligned$$
By Lemma \ref{l:5.1}, there exists $N_1\in\mathbb{N}\backslash\{0\}$
such that
$$|\{x\in K;\mathcal{R}(\vec{f_j})(x)\nsubseteq\mathcal{R}(\vec{f})(x)_{(\beta)}\}|:=|K^j|<\frac{\eta}{4}\ \ {\rm when}\ j\geq N_1.$$
Invoking Lemma \ref{l:5.3}, for almost every $x\in\Omega$, we have
$$\begin{array}{ll}
|D_n(\mathfrak{M}_\Omega(\vec{f_j})-\mathfrak{M}_\Omega(\vec{f}))(x)|
&=\displaystyle\Big|\sum\limits_{i=1}^mu_{x,\vec{f_j^i}}(r_1)-\sum\limits_{i=1}^mu_{x,\vec{f^i}}(r_2)\Big|\\
&\leq\displaystyle\sum\limits_{i=1}^m|u_{x,\vec{f_j^i}}(r_1)-u_{x,\vec{f^i}}(r_1)|+\Big|\sum\limits_{i=1}^mu_{x,\vec{f^i}}(r_1)
-\sum\limits_{i=1}^mu_{x,\vec{f^i}}(r_2)\Big|.
\end{array}$$
for any $r_1\in\mathcal{R}(\vec{f_j})(x)$ and $r_2\in\mathcal{R}
(\vec{f})(x)$ with $r_1,\,r_2<\delta(x)$.
Next we consider two cases:

(a) $r_1>0$. We can write
$$\begin{array}{ll}
|u_{x,\vec{f_j^i}}(r_1)-u_{x,\vec{f^i}}(r_1)|&\leq\displaystyle\sum\limits_{\mu=1}^{i-1}\mathfrak{M}_\Omega(\vec{G_{\mu}^{j}})(x)
+\sum\limits_{\nu=i+1}^{m}\mathfrak{M}_\Omega(\vec{H_{\nu}^j})(x)+\mathfrak{M}_\Omega(\vec{I_i^j})(x)\\
&:=\mathcal{G}_{i,j}(x),
\end{array}\eqno(5.47)$$
where $$\vec{G_{\mu}^j}=(f_1,\ldots,f_{\mu-1},f_{\mu,j}-f_{\mu},f_{\mu+1,j},\ldots,f_{i-1,j},D_nf_{i,j},f_{i+1,j}\ldots,f_{m,j}),$$
$$\vec{H_\nu^j}=(f_1,\ldots,f_{i-1},D_nf_i,f_{i+1},\ldots,f_{\nu-1},f_{\nu,j}-f_{\nu},f_{\nu+1,j},\ldots,f_{m,j}),$$
$$\vec{I_i^j}=(f_1,\ldots,f_{i-1},D_n(f_{i,j}-f_i),f_{i+1,j},\ldots,f_{m,j}).$$

(b) $r_1=0$. We can control $|u_{x,\vec{f_j^i}}(r_1)-u_{x,\vec{f^i}}(r_1)|$ by 
\begin{eqnarray*}
&&\displaystyle\sum\limits_{\mu=1}^{i-1}\Big(\prod\limits_{l_1=1}^{\mu-1}f_{l_1}(x)\Big)(f_{\mu,j}(x)-f_{\mu}(x))
\Big(\prod\limits_{l_2=\mu+1}^{i-1}f_{l_2,j}(x)\Big)|D_nf_{i,j}(x)|\Big(\prod\limits_{l_3=i+1}^{m}f_{l_3,j}(x)\Big)\\
&&+\displaystyle\sum\limits_{\nu=i+1}^{m}\Big(\prod\limits_{l_1=1}^{i-1}f_{l_1}(x)\Big)|D_nf_i(x)|
\Big(\prod\limits_{l_2=i+1}^{\nu-1}f_{l_2}(x)\Big)|f_{\nu,j}(x)-f_\nu(x)|\Big(\prod\limits_{l_3=\nu+1}^{m}f_{l_3,j}(x)\Big)\\
&&+\displaystyle\big(\prod\limits_{l_1=1}^{i-1}f_{l_1}(x)\big)|D_n(f_{i,j}-f_i)(x)|\Big(\prod\limits_{l_2=i+1}^{m}f_{l_2,j}(x)\Big).
\end{eqnarray*}
From the above we can claim that for almost every $x\in\Omega$,
$$|D_n(\mathfrak{M}_\Omega(\vec{f_j})-\mathfrak{M}_\Omega(\vec{f}))(x)|\leq \sum\limits_{i=1}^m\mathcal{G}_{i,j}(x)+\Big|\sum\limits_{i=1}^mu_{x,\vec{f^i}}(r_1)
-\sum\limits_{i=1}^mu_{x,\vec{f^i}}(r_2)\Big|\eqno(5.48)$$
for any $r_1\in\mathcal{R}(\vec{f_j})(x)$ and $r_2\in
\mathcal{R}(\vec{f})(x)$ with $r_1,\,r_2<\delta(x)$. 

One
can easily check that
$$\lim\limits_{j\rightarrow\infty}\|\mathcal{G}_{i,j}\|_{L^q(\Omega)}=0\ \ \forall 1\leq i\leq m.\eqno(5.49)$$
Then there exists $N_2\in\mathbb{N}\backslash\{0\}$ such
that
$$\sum\limits_{i=1}^m\|\mathcal{G}_{i,j}\|_{L^q(\Omega)}<\epsilon\ \ \forall j\geq N_2.\eqno(5.50)$$

If $x\in G\backslash(A_\gamma\cup A_\beta\cup K^j)$ we can
choose $r_1\in\mathcal{R}(\vec{f_j})(x)$ and
$r_2\in\mathcal{R}(\vec{f})(x)$ such that $r_1,\,r_2<\delta(x)$,
$|r_1-r_2|<\beta$ and
$$\Big|\sum\limits_{i=1}^mu_{x,\vec{f^i}}(r_1)-\sum\limits_{i=1}^mu_{x,\vec{f^i}}(r_2)\Big|<\epsilon.\eqno(5.51)$$
On the other hand, for any $r_1\in\mathcal{R}(\vec{f_j})(x)$
and $r_2\in\mathcal{R}(\vec{f})(x)$ with $r_1,\,r_2<\delta(x)$,
we have that for any $i=1,2,\ldots,m$,
$$\Big|\sum\limits_{i=1}^mu_{x,\vec{f^i}}(r_1)-\sum\limits_{i=1}^mu_{x,\vec{f^i}}(r_2)\Big|
\leq 2\sum\limits_{i=1}^m\mathfrak{M}_\Omega(\vec{f^i})(x).\eqno(5.52)$$
Note that $|A_\gamma\cup A_\beta\cup K^j|<\eta$ for $j\geq N_1$.
Thus by (5.48) and (5.50)-(5.52) we have
$$\begin{array}{ll}
&\|D_n(\mathfrak{M}_\Omega(\vec{f_j})-\mathfrak{M}_\Omega(\vec{f}))\|_{L^q(G)}\\
&\leq\displaystyle\Big\|\sum\limits_{i=1}^m\mathcal{G}_{i,j}\Big\|_{L^q(\Omega)}+\|\epsilon\|_{L^q(G\backslash(A_\gamma\cup A_\beta\cup K^j))}
+2\Big\|\sum\limits_{i=1}^m\mathfrak{M}_\Omega(\vec{f^i})\Big\|_{L^q(A_\gamma\cup A_\beta\cup K^j)}\\
&\leq (3+|K|)\epsilon,
\end{array}$$
for any $j\geq\max\{N_1,N_2\}$, which gives (5.44).

{\bf Step 2. Proof of (5.45).} Let $\{h_k\}_{k=1}^\infty$
be a sequence of numbers such that $h_k\rightarrow 0^{+}$
as $k\rightarrow\infty$. Following from the notations in
\cite{Lu2}, we set
$$B^j:=\{x\in K\backslash G:\delta(x)\in\mathcal{R}(\vec{f_j})(x)\},$$
$$B^{+}:=\{x\in K\backslash G:\delta(x+h_ke_l)\geq\delta(x)\ {\rm for\ infinitely\ many}\ k\},$$
$$B^{-}:=\{x\in K\backslash G:\delta(x+h_ke_l)\leq\delta(x)\ {\rm for\ infinitely\ many}\ k\}.$$
Note that $K\backslash G\subset B^j\cup B^{+}\cup B^{-}$.
Invoking Lemma \ref{l:5.5} we obtain
$$\|D_n(\mathfrak{M}_\Omega(\vec{f_j})-\mathfrak{M}_\Omega(\vec{f_j}))\|_{L^q(B^j)}\rightarrow0\ \ {\rm as}\ j\rightarrow\infty.\eqno(5.53)$$

Below we treat the case when $x\in B^{+}$. We know that
for almost every $x\in B^{+}$ we have $x+h_ke_n\in B^{+}$
when $k$ is large enough. It follows that
$$\mathfrak{M}_\Omega(\vec{f})(x+h_ke_n)\geq u_{x+h_ke_n,\vec{f}}(\delta(x))$$
for $k$ large enough. Thus by Lemma \ref{l:5.6} we have
$$\begin{array}{ll}
D_n\mathfrak{M}_\Omega(\vec{f})(x)&=\displaystyle\lim\limits_{k\rightarrow\infty}
\frac{1}{h_k}\Big(\mathfrak{M}_\Omega(\vec{f})(x+h_ke_n)-\mathfrak{M}_\Omega(\vec{f})(x)\Big)\\
&\geq\displaystyle\limsup\limits_{k\rightarrow\infty}\frac{1}{h_k}(u_{x+h_ke_n,\vec{f}}(\delta(x))-u_{x,\vec{f}}(\delta(x)))\\
&=\displaystyle\limsup\limits_{k\rightarrow\infty}\sum\limits_{i=1}^mA_{x,\delta(x)}(f_{i,h_k}^n)
\prod\limits_{\mu=i}^{i-1}A_{x,\delta(x)}(f_\mu)\prod\limits_{\nu=i+1}^mA_{x,\delta(x)}(f_{\tau(h_k)}^{\nu,n})\\
&=\displaystyle\sum\limits_{i=1}^m u_{x,\vec{f^i}}(\delta(x))
\end{array}$$
for almost every $x\in B^{+}$. Combining this inequality
with Lemma \ref{l:5.3} implies that
$$D_l\mathfrak{M}_\Omega(\vec{f})(x)\geq\sum\limits_{i=1}^m u_{x,\vec{f^i}}(r)\eqno(5.54)$$
for all $r\in\mathcal{R}(\vec{f})(x)$ (equality if $r<\delta(x)$).
Let us recall the definition of $\beta$ and $K^j$. We know that
$\mathcal{R}(\vec{f_j})(x)\subset\mathcal{R}(\vec{f})(x)_{(\beta)}$
when $j\geq N_1$ and $x\in\Omega\backslash K^j$. It follows that
for every $x\in B^{+}\backslash(K^j\cup B^j)$ with $j\geq N_1$,
there exists $r_j\in\mathcal{R}(\vec{f_j})(x)$, $r_j<\delta(x)$
such that $|r_j-r|\leq\beta$ for some $r\in\mathcal{R}(\vec{f})(x)$
(Here $r$ may be $\delta(x)$). Since $x\in B^{+}\backslash(K^j
\cup B^j)$, then $r_j<\delta(x)$. By Lemma \ref{l:5.3} and (5.54) we
obtain that
$$\aligned
D_n(\mathfrak{M}_\Omega(\vec{f})-\mathfrak{M}_\Omega(\vec{f_j}))(x)&\geq\displaystyle\sum\limits_{i=1}^m u_{x,\vec{f^i}}(r)-\sum\limits_{i=1}^m u_{x,\vec{f_j^i}}(r_j)\\
&\geq\displaystyle\sum\limits_{i=1}^m u_{x,\vec{f^i}}(r)-\sum\limits_{i=1}^m u_{x,\vec{f^i}}(r_j)+\sum\limits_{i=1}^m u_{x,\vec{f^i}}(r_j)-\sum\limits_{i=1}^m u_{x,\vec{f_j^i}}(r_j)\\
&=:I_{1,j}(x)+I_{2,j}(x)
\endaligned\eqno(5.55)$$
for almost every $x\in B^{+}\backslash(K^j\cup B^j)$ with
$j\geq N_1$. By the continuity of the functions $u_{x,
\vec{f^i}}$ on $[0,\delta(x)]$ we have
$$|\{x\in\Omega:|I_{1,j}(x)|\geq\epsilon/2\}|\rightarrow0\ \ {\rm as}\ j\rightarrow\infty.\eqno(5.56)$$
On the other hand, by the similar argument as in getting
(5.47) we have
$$|I_{2,j}(x)|\leq\sum\limits_{i=1}^m\mathcal{G}_{i,j}(x),$$
where $\mathcal{G}_{i,j}$ is given as in (5.47). We get from
(5.49) that
$$\Big|\Big\{x\in\Omega:\sum\limits_{i=1}^m\mathcal{G}_{i,j}(x)\geq\epsilon\Big\}\Big|\rightarrow0\ \ {\rm as}\ j\rightarrow\infty.$$
It follows that
$$|\{x\in\Omega:|I_{2,j}(x)|\geq\epsilon/2\}|\rightarrow0\ \ {\rm as}\ j\rightarrow\infty.\eqno(5.57)$$
Then by (5.55)-(5.57) we have
$$\begin{array}{ll}
&\quad|\{x\in B^{+}\backslash(K^j\cup B^j):D_n(\mathfrak{M}_\Omega(\vec{f})-\mathfrak{M}_\Omega(\vec{f_j}))(x)\leq -\epsilon\}|\\
&\leq\displaystyle\Big|\Big\{x\in B^{+}\backslash(K^j\cup B^j):I_{1,j}(x)+I_{2,j}(x)\leq-\epsilon\Big\}\Big|\rightarrow0\ \ {\rm as}\ j\rightarrow\infty.
\end{array}\eqno(5.58)$$
By (5.40)-(5.41), (5.58) and Lemma \ref{l:5.7} we have
$$\|D_n(\mathfrak{M}_\Omega(\vec{f_j})-\mathfrak{M}_\Omega(\vec{f}))\|_{L^q(B^{+}\backslash(K^j\cup B^j))}\rightarrow0\ \ {\rm as}\ j\rightarrow\infty.\eqno(5.59)$$
One the other hand, by (5.48)-(5.49) and (5.52) we have
$$\|D_n(\mathfrak{M}_\Omega(\vec{f_j})-\mathfrak{M}_\Omega(\vec{f}))\|_{L^q(B^{+}\cap K^j)}\leq\Big\|\sum\limits_{i=1}^m\mathcal{G}_{i,j}\Big\|_{L^q(\Omega)}+2\Big\|\sum\limits_{i=1}^m\mathfrak{M}_\Omega(\vec{f^i})\Big\|_{L^q(K^j)}
\leq 3\epsilon$$
for any $j\geq\max\{N_1,N_2\}$, which gives
$$\|D_n(\mathfrak{M}_\Omega(\vec{f_j})-\mathfrak{M}_\Omega(\vec{f}))\|_{L^q(B^{+}\cap K^j)}\rightarrow0\ \ {\rm as}\ j\rightarrow\infty.\eqno(5.60)$$
Combining (5.59) with (5.60) and (5.53) yields that
$$\|D_n(\mathfrak{M}_\Omega(\vec{f_j})-\mathfrak{M}_\Omega(\vec{f}))\|_{L^q(B^{+})}\rightarrow0\ \ {\rm as}\ j\rightarrow\infty.\eqno(5.61)$$

It remains to treat the case when $x\in B^{-}$. We know that
for almost every $x\in B^{-}$ we have $x+h_ke_n\in B^{-}$
when $k$ is large enough. It follows that
$$\mathfrak{M}_\Omega(\vec{f})(x+h_ke_n)=u_{x+h_ke_n,\vec{f}}(\delta(x+h_ke_n))\leq u_{x+h_ke_n,\vec{f}}(\delta(x))$$
for $k$ large enough. By the similar arguments as in getting
(5.54) we have
$$D_n\mathfrak{M}_\Omega(\vec{f})(x)\leq\sum\limits_{i=1}^m u_{x,\vec{f^i}}(r)\eqno(5.62)$$
for all $r\in\mathcal{R}(\vec{f})(x)$ (equality if $r<\delta(x)$).
Let us recall the definition of $\beta$ and $K^j$. We know that
$\mathcal{R}(\vec{f_j})(x)\subset\mathcal{R}(\vec{f})(x)_{(\beta)}$
when $j\geq N_1$ and $x\in\Omega\backslash K^j$. It follows that
for every $x\in B^{-}\backslash K^j$ with $j\geq N_1$, there
exists $r_j\in\mathcal{R}(\vec{f_j})(x)$, $r_j<\delta(x)$ such
that $|r_j-r|\leq\beta$ for some $r\in\mathcal{R}(\vec{f})(x)$
(Here $r$ may be $\delta(x)$). By the similar argument as in
getting (5.55) we have
$$D_n(\mathfrak{M}_\Omega(\vec{f})-\mathfrak{M}_\Omega(\vec{f_j}))(x)\leq I_{1,j}(x)+I_{2,j}(x)\eqno(5.63)$$
for almost every $x\in B^{-}\backslash(K^j\cup B^j)$ with
$j\geq N_1$. This together with (5.56)-(5.57) yields that
$$\begin{array}{ll}
&\quad|\{x\in B^{-}\backslash(K^j\cup B^j):D_n(\mathfrak{M}_\Omega(\vec{f})-\mathfrak{M}_\Omega(\vec{f_j}))(x)\geq\epsilon\}|\\
&\leq |\{x\in B^{-}\backslash(K^j\cup B^j):I_{1,j}(x)+I_{2,j}(x)\geq\epsilon\}|\rightarrow0\ \ {\rm as}\ j\rightarrow\infty.
\end{array}\eqno(5.64)$$
On the other hand, by (5.64) and the similar argument as
in getting (5.59) we have
$$\|D_n(\mathfrak{M}_\Omega(\vec{f})-\mathfrak{M}_\Omega(\vec{f_j}))\|_{L^q(B^{-}\backslash(K^j\cup B^j))}\rightarrow0\ \ {\rm as}\ j\rightarrow\infty.\eqno(5.65)$$
Using the similar argument as in getting (5.60) we get
$$\|D_n(\mathfrak{M}_\Omega(\vec{f_j})-\mathfrak{M}_\Omega(\vec{f_j}))\|_{L^q(B^{-}\cap K^j)}\rightarrow0\ \ {\rm as}\ j\rightarrow\infty.\eqno(5.66)$$
Combining (5.65)-(5.66) with (5.53) implies that
$$\|D_n(\mathfrak{M}_\Omega(\vec{f})-\mathfrak{M}_\Omega(\vec{f_j}))\|_{L^q(B^{-})}\rightarrow0\ \ {\rm as}\ j\rightarrow\infty.\eqno(5.67)$$
(5.45) follows from (5.53), (5.61) and (5.67). This finishes
the proof of Theorem \ref{t:alpha=0continuous}. 
\end{proof}

\begin{proof}[Proof of Theorem \ref{t:alpha>0continuous}.]
Let $\vec{f}=(f_1,\ldots,f_m)$ with each $f_j\in W^{1,p_j}(\Omega)$
and $1<p_j<\infty$. Let $0<\frac{1}{q}=\frac{1}{p_1}+\ldots+
\frac{1}{p_m}-\frac{\alpha-1}{n}<1$ and $\frac{1}{q^{*}}=
\frac{1}{p_1}+\ldots+\frac{1}{p_m}-\frac{\alpha}{n}$.
Obviously, $q<q^{*}$. For any $1\leq i\leq m$, let
$f_{i,j}\rightarrow f_i$ in $W^{1,p_i}(\Omega)$ when
$j\rightarrow\infty$. We may assume that all $f_i\geq0$
and $f_{i,j}\geq0$. We also let $\vec{f_j}=(f_{1,j},\ldots,
f_{m,j})$. By (5.8), H\"{o}lder's inequality,
Minkowski's inequality and the fact
that $|\Omega|<\infty$ we have
$$\|\mathfrak{M}_{\alpha,\Omega}(\vec{f_j})-\mathfrak{M}_{\alpha,\Omega}(\vec{f})\|_{L^q(\Omega)}
\leq\sum\limits_{i=1}^m\|\mathfrak{M}_{\alpha,\Omega}(\vec{F_j^i})\|_{L^q(\Omega)}\leq|\Omega|^{\frac{1}{q}-\frac{1}{q^{*}}}
\sum\limits_{i=1}^m\|\mathfrak{M}_{\alpha,\Omega}(\vec{F_j^i})\|_{L^{q^{*}}(\Omega)},\eqno(5.68)$$
where $\vec{F_j^i}$ is given as in (5.8). Using (5.68) and
Lemma \ref{l:3.2} we have
$$\|\mathfrak{M}_{\alpha,\Omega}(\vec{f_j})-\mathfrak{M}_{\alpha,\Omega}(\vec{f})\|_{L^q(\Omega)}\rightarrow0\ \ {\rm as}\ j\rightarrow\infty.\eqno(5.69)$$
For any $1\leq l\leq n$, it suffices to show that
$$\|D_l(\mathfrak{M}_{\alpha,\Omega}(\vec{f_j})-\mathfrak{M}_{\alpha,\Omega}(\vec{f}))\|_{L^q(\Omega)}\rightarrow 0\ \ {\rm as}\ j\rightarrow\infty\eqno(5.70)$$

We will prove (5.70)
for $l=n$. Let $\vec{f^i},\,\vec{f_j^i},\,\vec{g^i},\,\vec{g_j^i}$ be
given as in the proof of Theorem \ref{t:alpha=0continuous}. Fix $\epsilon>0$,
there exists $K\subset\subset\Omega$ such that
$\sum_{i=1}^m\|\mathfrak{M}_{\alpha,\Omega}
(\vec{f^i})\|_{L^q(\Omega\backslash K)}<\epsilon$. By
absolute continuity, there exists $\eta>0$ such that
$\|\mathfrak{M}_{\alpha-1,\Omega}(\vec{f})\|_{L^q(A)}<\epsilon$ and
$\sum_{i=1}^m\|\mathfrak{M}_{\alpha,\Omega}(\vec{g^i})
\|_{L^q(A)}<\epsilon$ whenever $A$ is a measurable set
such that $A\subset K$ and $|A|<\eta$. By Theorem \ref{t:alpha>0pointwise}
we have
$$\begin{array}{ll}
&|\nabla(\mathfrak{M}_{\alpha,\Omega}(\vec{f_j})-\mathfrak{M}_{\alpha,\Omega}(\vec{f}))(x)|\\
&\leq\displaystyle\alpha(\mathfrak{M}_{\alpha-1,\Omega}(\vec{f})(x)
+\mathfrak{M}_{\alpha-1,\Omega}(\vec{f_j})(x))+
2\sum\limits_{i=1}^m(\mathfrak{M}_{\alpha,\Omega}(\vec{g^i})(x)+\mathfrak{M}_{\alpha,\Omega}(\vec{g_j^i})(x))\\
&\leq\displaystyle 2\alpha\mathfrak{M}_{\alpha-1,\Omega}(\vec{f})(x)+
4\sum\limits_{i=1}^m\mathfrak{M}_{\alpha,\Omega}(\vec{g^i})(x)+\alpha|\mathfrak{M}_{\alpha-1,\Omega}(\vec{f_j})(x)-
\mathfrak{M}_{\alpha-1,\Omega}(\vec{f})(x)|\\
&\quad+\displaystyle 2\sum\limits_{i=1}^m|\mathfrak{M}_{\alpha,\Omega}(\vec{g_j^i})(x)
-\mathfrak{M}_{\alpha,\Omega}(\vec{g^i})(x)|\\
&=:\displaystyle 2\alpha\mathfrak{M}_{\alpha-1,\Omega}(\vec{f})(x)+
4\sum\limits_{i=1}^m\mathfrak{M}_{\alpha,\Omega}(\vec{g^i})(x)+G_j(x), \quad\hbox{for} \ a.\ e.\ x \in \Omega
\end{array}\eqno(5.71)$$ One can easily check that
$$G_j(x)\leq\sum\limits_{i=1}^m\mathfrak{M}_{\alpha-1,\Omega}(\vec{F_j^i})(x)
+2\sum\limits_{i=1}^m(\sum\limits_{\mu=1}^{i-1}\mathfrak{M}_{\alpha,\Omega}(\vec{I_{\mu,j}})(x)
+\sum\limits_{\nu=i+1}^{m}\mathfrak{M}_{\alpha,\Omega}(\vec{J_{\nu,j}})(x)+\mathfrak{M}_{\alpha,\Omega}(\vec{K_{i,j}})(x)),$$
where $\vec{F_j^i}$ is given as in (5.8) and $\vec{I_{\mu,j}},
\,\vec{J_{\nu,j}},\,\vec{K_{i,j}}$ are given as in the
proof of Theorem \ref{t:alpha=0continuous}. It is easy to see that
$$\|G_j\|_{L^q(\Omega)}\rightarrow0\ \ {\rm as}\ j\rightarrow\infty,\eqno(5.72)$$
which yields that there exists $N_0\in\mathbb{N}\backslash\{0\}$
such that $\|G_j\|_{L^q(\Omega)}<\epsilon$ for all $j\geq N_0$.
This together with (5.71) and Minkowski's inequality we have
$$\begin{array}{ll}
&\|D_n(\mathfrak{M}_{\alpha,\Omega}(\vec{f_j})-\mathfrak{M}_{\alpha,\Omega}(\vec{f}))\|_{L^q(\Omega\backslash K)}
\\&\leq\displaystyle\Big\|2\alpha\mathfrak{M}_{\alpha-1,\Omega}(\vec{f})(x)+
4\sum\limits_{i=1}^m\mathfrak{M}_{\alpha,\Omega}(\vec{g^i})(x)\Big\|_{L^q(\Omega\backslash K)}+\|G_j\|_{L^q(\Omega)}\leq(2\alpha+5)\epsilon, \quad\hbox{for}\  j\geq N_0.
\end{array}$$
It follows that
$$\|D_n(\mathfrak{M}_{\alpha,\Omega}(\vec{f_j})-\mathfrak{M}_{\alpha,\Omega}(\vec{f}))\|_{L^q(\Omega\backslash K)}\rightarrow 0\ \ {\rm as}\ j\rightarrow\infty.\eqno(5.73)$$
Thus, to prove (5.70), it suffices to show that
$$\|D_n(\mathfrak{M}_{\alpha,\Omega}(\vec{f_j})-\mathfrak{M}_{\alpha,\Omega}(\vec{f}))\|_{L^q(K)}\rightarrow 0\ \ {\rm as}\ j\rightarrow\infty.\eqno(5.74)$$
Let
$G=\{x\in K:\delta(x)\not\in\mathcal{R}_\alpha(\vec{f})(x)\}.$
To prove (5.74), it is sufficient to prove that
$$\|D_n(\mathfrak{M}_{\alpha,\Omega}(\vec{f_j})-\mathfrak{M}_{\alpha,\Omega}(\vec{f}))\|_{L^q(G)}\rightarrow 0\ \ {\rm as}\ j\rightarrow\infty,\eqno(5.75)$$
$$\|D_n(\mathfrak{M}_\Omega(\vec{f_j})-\mathfrak{M}_\Omega(\vec{f}))\|_{L^q(K\backslash G)}\rightarrow 0\ \ {\rm as}\ j\rightarrow\infty.\eqno(5.76)$$

{\bf Step 1. Proof of (5.75).}\quad Since the sets
$\mathcal{R}_\alpha(\vec{f})(x)$ are compact, we can
choose $\gamma>0$ such that
$$|\{x\in G:\mathcal{R}_\alpha(\vec{f})(x)\nsubseteq[0,\delta(x)-\gamma]\}|=:|A_\gamma|<\frac{\eta}{4}.\eqno(5.77)$$
As we already observed, for almost every $x\in\Omega$ and
$i=1,2,\ldots,m$, the function $u_{x,\vec{f^i},\alpha}$
is uniformly continuous on $[0,\delta(x)]$. Thus, for almost
every $x\in\Omega$, the function $\sum_{i=1}^mu_{x,\vec{f^i},\alpha}$
is uniformly continuous on $[0,\delta(x)]$ and we can find
$\gamma(x)\in(0,\gamma)$ such that
$$\Big|\sum\limits_{i=1}^mu_{x,\vec{f^i},\alpha}(r_1)-\sum\limits_{i=1}^mu_{x,\vec{f^i},\alpha}(r_2)\Big|<\epsilon\ \ {\rm whenever}\ |r_1-r_2|<\gamma(x).$$
We can write $K$ as
$K=\Big(\bigcup\limits_{k=1}^\infty\Big\{x\in K;\frac{1}{k}<\gamma(x)<\gamma\Big\}\Big)\bigcup\mathcal{N},$
where $|\mathcal{N}|=0$. It follows that there exists
$\beta\in(0,\gamma)$ such that
$$\Big|\Big\{x\in K:\Big|\sum\limits_{i=1}^mu_{x,\vec{f^i},\alpha}(r_1)-\sum\limits_{i=1}^mu_{x,\vec{f^i},\alpha}(r_2)\Big|\geq\epsilon\ {\rm for\ some}\ r_1,r_2\ {\rm with}\ |r_1-r_2|<\beta\Big\}\Big|:=|A_\beta|<\frac{\eta}{4}.$$
By Lemma \ref{l:5.1}, there exists $j_1\in\mathbb{N}\backslash\{0\}$
such that
$$|\{x\in K;\mathcal{R}_\alpha(\vec{f_j})(x)\nsubseteq\mathcal{R}_\alpha(\vec{f})(x)_{(\beta)}\}|:=|K^j|<\frac{\eta}{4}\ \ {\rm when}\ j\geq j_1.$$
Let $\vec{f_j^i}=(f_{1,j},\ldots,f_{i-1,j},D_lf_{i,j},f_{i+1,j},
\ldots,f_{m,j})$. Invoking Lemma \ref{l:5.3}, for almost every $x\in\Omega$,
we have
$$\begin{array}{ll}
&|D_n(\mathfrak{M}_{\alpha,\Omega}(\vec{f_j})-\mathfrak{M}_{\alpha,\Omega}(\vec{f}))(x)|\\
&\leq\displaystyle\sum\limits_{i=1}^m|u_{x,\vec{f_j^i},\alpha}(r_1)-u_{x,\vec{f^i},\alpha}(r_1)|+\Big|\sum\limits_{i=1}^mu_{x,\vec{f^i},\alpha}(r_1)
-\sum\limits_{i=1}^mu_{x,\vec{f^i},\alpha}(r_2)\Big|.
\end{array}$$
for any $r_1\in\mathcal{R}_\alpha(\vec{f_j})(x)$ and
$r_2\in\mathcal{R}_\alpha(\vec{f})(x)$ with $r_1,\,r_2<\delta(x)$.
When $r_1=0$, we have $|u_{x,\vec{f_j^i},\alpha}(r_1)-u_{x,
\vec{f^i},\alpha}(r_1)|=0$. When $r_1>0$. We can write
$$\begin{array}{ll}
|u_{x,\vec{f_j^i},\alpha}(r_1)-u_{x,\vec{f^i},\alpha}(r_1)|
&\leq\displaystyle\sum\limits_{\mu=1}^{i-1}\mathfrak{M}_{\alpha,\Omega}(\vec{G_{\mu}^{j}})(x)
+\sum\limits_{\nu=i+1}^{m}\mathfrak{M}_{\alpha,\Omega}(\vec{H_{\nu}^{j}})(x)+\mathfrak{M}_{\alpha,\Omega}(\vec{I_{i}^{j}})(x)\\
&:=\mathcal{K}_{i,j}(x),
\end{array}\eqno(5.78)$$
where $\vec{G_{\mu}^j},\,\vec{H_{\nu}^{j}},\,\vec{I_{i}^j}$
are given as in (5.47). Thus we have that for almost every
$x\in\Omega$,
$$|D_n(\mathfrak{M}_{\alpha,\Omega}(\vec{f_j})-\mathfrak{M}_{\alpha,\Omega}(\vec{f}))(x)|\leq \sum\limits_{i=1}^m\mathcal{K}_{i,j}(x)+\Big|\sum\limits_{i=1}^mu_{x,\vec{f^i},\alpha}(r_1)
-\sum\limits_{i=1}^mu_{x,\vec{f^i},\alpha}(r_2)\Big|\eqno(5.79)$$
for any $r_1\in\mathcal{R}_\alpha(\vec{f_j})(x)$ and
$r_2\in\mathcal{R}_\alpha(\vec{f})(x)$ with $r_1,\,r_2<\delta(x)$.
Clearly,
$$\lim\limits_{j\rightarrow\infty}\|\mathcal{K}_{i,j}\|_{L^{q^{*}}(\Omega)}=0\ \ \forall 1\leq i\leq m.$$
It follows that
$$\lim\limits_{j\rightarrow\infty}\|\mathcal{K}_{i,j}\|_{L^{q}(\Omega)}=0\ \ \forall 1\leq i\leq m.\eqno(5.80)$$
Then there exists $N_2\in\mathbb{N}\backslash\{0\}$ such that
$$\sum\limits_{i=1}^m\|\mathcal{K}_{i,j}\|_{L^q(\Omega)}<\epsilon\ \ \forall j\geq N_2.\eqno(5.81)$$

If $x\in G\backslash(A_\gamma\cup A_\beta\cup K^j)$ we can
choose $r_1\in\mathcal{R}_\alpha(\vec{f_j})(x)$ and
$r_2\in\mathcal{R}_\alpha(\vec{f})(x)$ such that $r_1,\,r_2<\delta(x)$,
$|r_1-r_2|<\beta$ and
$$\Big|\sum\limits_{i=1}^mu_{x,\vec{f^i},\alpha}(r_1)-\sum\limits_{i=1}^mu_{x,\vec{f^i},\alpha}(r_2)\Big|<\epsilon.\eqno(5.82)$$
On the other hand, for any $r_1\in\mathcal{R}_\alpha(\vec{f_j})(x)$
and $r_2\in\mathcal{R}_\alpha(\vec{f})(x)$ with $r_1,\,r_2<\delta(x)$,
we have that for any $i=1,2,\ldots,m$,
$$\Big|\sum\limits_{i=1}^mu_{x,\vec{f^i},\alpha}(r_1)-\sum\limits_{i=1}^mu_{x,\vec{f^i},\alpha}(r_2)\Big|
\leq\sum\limits_{i=1}^m|u_{x,\vec{f^i},\alpha}(r_1)-u_{x,\vec{f^i},\alpha}(r_2)|\leq 2\sum\limits_{i=1}^m\mathfrak{M}_{\alpha,\Omega}(\vec{f^i})(x).\eqno(5.83)$$
Note that $|A_\gamma\cup A_\beta\cup K^j|<\eta$ for $j\geq N_1$.
Thus by (5.79), (5.81)-(5.83) and H\"{o}lder's inequality we have
$$\begin{array}{ll}
&\|D_n(\mathfrak{M}_{\alpha,\Omega}(\vec{f_j})-\mathfrak{M}_{\alpha,\Omega}(\vec{f}))\|_{L^q(G)}\\
&\leq\displaystyle\Big\|\sum\limits_{i=1}^m\mathcal{K}_{i,j}\Big\|_{L^q(\Omega)}+\|\epsilon\|_{L^q(G\backslash(A_\gamma\cup A_\beta\cup K^j))}
+2\Big\|\sum\limits_{i=1}^m\mathfrak{M}_{\alpha,\Omega}(\vec{f^i})\Big\|_{L^q(A_\gamma\cup A_\beta\cup K^j)}\leq (3+|\Omega|)\epsilon,
\end{array}$$
for any $j\geq\max\{N_1,N_2\}$, which gives (5.75).

{\bf Step 2. Proof of (5.76).} Let $\{h_k\}_{k=1}^\infty$
be a sequence of numbers such that $h_k\rightarrow 0^{+}$
as $k\rightarrow\infty$. Following from the notations in
\cite{Lu2}, we set
$B^j:=\{x\in K\backslash G:\delta(x)\in\mathcal{R}_\alpha(\vec{f_j})(x)\},$
$B^{+}:=\{x\in K\backslash G:\delta(x+h_ke_n)\geq\delta(x)\ {\rm for\ infinitely\ many}\ k\}$ and $B^{-}:=\{x\in K\backslash G:\delta(x+h_ke_n)\leq\delta(x)\ {\rm for\ infinitely\ many}\ k\}.$
Note that $K\backslash G\subset B^j\cup B^{+}\cup B^{-}$.
Invoking Lemma \ref{l:5.5} we obtain
$$\|D_n(\mathfrak{M}_{\alpha,\Omega}(\vec{f_j})-\mathfrak{M}_{\alpha,\Omega}(\vec{f}))\|_{L^q(B^j)}\rightarrow0\ \ {\rm as}\ j\rightarrow\infty.\eqno(5.84)$$

Below we treat the case when $x\in B^{+}$. We know that for
almost every $x\in B^{+}$ we have $x+h_ke_n\in B^{+}$ when
$k$ is large enough. Therefore, for $k$ large enough, it holds that
$$\mathfrak{M}_{\alpha,\Omega}(\vec{f})(x+h_ke_n)\geq u_{x+h_ke_n,\vec{f},\alpha}(\delta(x)),\quad\hbox{for \ }k \hbox{\ large \ enough.}$$
Thus, for almost every $x\in B^{+}$, Lemma \ref{l:5.6} gives that
$$\begin{array}{ll}
D_n\mathfrak{M}_{\alpha,\Omega}(\vec{f})(x)&=\displaystyle\lim\limits_{k\rightarrow\infty}\frac{1}{h_k}
\Big(\mathfrak{M}_{\alpha,\Omega}(\vec{f})(x+h_ke_n)-\mathfrak{M}_{\alpha,\Omega}(\vec{f})(x)\Big)\\
&\geq\displaystyle\limsup\limits_{k\rightarrow\infty}\frac{1}{h_k}(u_{x+h_ke_n,\vec{f},\alpha}(\delta(x))-u_{x,\vec{f},\alpha}(\delta(x)))\\
&=\displaystyle\limsup\limits_{k\rightarrow\infty}\delta(x)^\alpha\sum\limits_{i=1}^mA_{x,\delta(x)}(f_{i,h_k}^n)
\prod\limits_{\mu=i}^{i-1}A_{x,\delta(x)}(f_\mu)\prod\limits_{\nu=i+1}^mA_{x,\delta(x)}(f_{\tau(h_k)}^{\nu,n})\\
&=u_{x,\vec{f^i},\alpha}(\delta(x)).
\end{array}$$ Combining this inequality
with Lemma \ref{l:5.3} implies that
$$D_l\mathfrak{M}_{\alpha,\Omega}(\vec{f})(x)\geq u_{x,\vec{f^i},\alpha}(r),\quad \hbox{for all \ }r\in\mathcal{R}_\alpha(\vec{f})(x).\eqno(5.85)$$
(equality if
$r<\delta(x)$). Let us recall the definition of $\beta$
and $K^j$. We know that $\mathcal{R}_\alpha(\vec{f_j})(x)
\subset\mathcal{R}_\alpha(\vec{f})(x)_{(\beta)}$ when
$j\geq N_1$ and $x\in\Omega\backslash K^j$. It follows
that for every $x\in B^{+}\backslash K^j$, there exists
$r_j\in\mathcal{R}_\alpha(\vec{f_j})(x)$, $r_j<\delta(x)$
such that $|r_j-r|\leq\beta$ for some
$r\in\mathcal{R}_\alpha(\vec{f})(x)$ (Here $r$ may be
$\delta(x)$). Since $x\in B^{+}\backslash(K^j\cup B^j)$,
then $r_j<\delta(x)$. By Lemma \ref{l:5.3} and (5.85), for almost every $x\in B^{+}\backslash(K^j\cup B^j)$ with
$j\geq N_1$, we obtain
that
$$\begin{array}{ll}
&D_n(\mathfrak{M}_{\alpha,\Omega}(\vec{f})-\mathfrak{M}_{\alpha,\Omega}(\vec{f_j}))(x)\\&\geq\displaystyle\sum\limits_{i=1}^m u_{x,\vec{f^i},\alpha}(r)-\sum\limits_{i=1}^m u_{x,\vec{f^i},\alpha}(r_j)+\sum\limits_{i=1}^m u_{x,\vec{f^i},\alpha}(r_j)-\sum\limits_{i=1}^m u_{x,\vec{f_j^i},\alpha}(r_j)\\
&=:J_{1,j}(x)+J_{2,j}(x).
\end{array}\eqno(5.86)$$
By the continuity of the functions $u_{x,
\vec{f^i},\alpha}$ on $[0,\delta(x)]$ we have
$$|\{x\in\Omega:|J_{1,j}(x)|\geq\epsilon/2\}|\rightarrow0\ \ {\rm as}\ j\rightarrow\infty.\eqno(5.87)$$
By the similar argument as in getting (5.78) we have
$$|J_{2,j}(x)|\leq\sum\limits_{i=1}^m\mathcal{K}_{i,j}(x),\eqno(5.88)$$
where $\mathcal{K}_{i,j}$ is given as in (5.78). We get from
(5.80) that
$$\Big|\Big\{x\in\Omega:\sum\limits_{i=1}^m\mathcal{K}_{i,j}(x)\geq\epsilon\Big\}\Big|\rightarrow0\ \ {\rm as}\ j\rightarrow\infty.$$
It follows from (5.88) that
$$|\{x\in\Omega:|J_{2,j}|\geq\epsilon/2\}|\rightarrow0\ \ {\rm as}\ j\rightarrow\infty.\eqno(5.89)$$
Then by (5.86)-(5.87) and (5.89) we have
$$\begin{array}{ll}
&|\{x\in B^{+}\backslash(K^j\cup B^j):D_n(\mathfrak{M}_{\alpha,\Omega}(\vec{f})-\mathfrak{M}_{\alpha,\Omega}(\vec{f_j}))(x)\leq -\epsilon\}|\\
&\leq\displaystyle\Big|\Big\{x\in B^{+}\backslash(K^j\cup B^j):J_{1,j}(x)+J_{2,j}(x)\leq-\epsilon\Big\}\Big|\rightarrow0\ \ {\rm as}\ j\rightarrow\infty.
\end{array}\eqno(5.90)$$
By (5.71)-(5.72), (5.90) and Lemma \ref{l:5.7} we have
$$\|D_n(\mathfrak{M}_{\alpha,\Omega}(\vec{f_j})-\mathfrak{M}_{\alpha,\Omega}(\vec{f}))\|_{L^q(B^{+}\backslash(K^j\cup B^j))}\rightarrow0\ \ {\rm as}\ j\rightarrow\infty.\eqno(5.91)$$
One the other hand, by (5.79), (5.81) and (5.82), or any $j\geq\max\{N_1,N_2\}$, we have
$$\|D_n(\mathfrak{M}_{\alpha,\Omega}(\vec{f_j})-\mathfrak{M}_{\alpha,\Omega}(\vec{f}))\|_{L^q(B^{+}\cap K^j)}\leq\Big\|\sum\limits_{i=1}^m\mathcal{K}_{i,j}\Big\|_{L^q(\Omega)}+2\Big\|\sum\limits_{i=1}^m\mathfrak{M}_{\alpha,\Omega}(\vec{f^i})\Big\|_{L^q(K^j)}
\leq 3\epsilon,$$which gives
$$\|D_n(\mathfrak{M}_{\alpha,\Omega}(\vec{f_j})-\mathfrak{M}_{\alpha,\Omega}(\vec{f}))\|_{L^q(B^{+}\cap K^j)}\rightarrow0\ \ {\rm as}\ j\rightarrow\infty.\eqno(5.92)$$
Combining (5.91) with (5.92) and (5.84) yields that
$$\|D_n(\mathfrak{M}_{\alpha,\Omega}(\vec{f_j})-\mathfrak{M}_{\alpha,\Omega}(\vec{f}))\|_{L^q(B^{+})}\rightarrow0\ \ {\rm as}\ j\rightarrow\infty.\eqno(5.93)$$

It remains to treat the case when $x\in B^{-}$. We know that
for almost every $x\in B^{-}$ we have $x+h_ke_n\in B^{-}$
when $k$ is large enough. It follows that
$$\mathfrak{M}_{\alpha,\Omega}(\vec{f})(x+h_ke_n)=u_{x+h_ke_l,\vec{f},\alpha}(\delta(x+h_ke_l))\leq u_{x+h_ke_l,\vec{f},\alpha}(\delta(x))$$
for $k$ large enough. By the similar arguments as in getting
(5.85) we have
$$D_n\mathfrak{M}_{\alpha,\Omega}(\vec{f})(x)\leq u_{x,\vec{f^i},\alpha}(r)\eqno(5.94)$$
for all $r\in\mathcal{R}_\alpha(\vec{f})(x)$ (equality if
$r<\delta(x)$). Let us recall the definition of $\beta$
and $K^j$. We know that
$\mathcal{R}_\alpha(\vec{f_j})(x)\subset\mathcal{R}_\alpha
(\vec{f})(x)_{(\beta)}$ when $j\geq N_1$ and
$x\in\Omega\backslash K^j$. It follows that for every
$x\in B^{-}\backslash K^j$, there exists
$r_j\in\mathcal{R}_\alpha(\vec{f_j})(x)$, $r_j<\delta(x)$
such that $|r_j-r|\leq\beta$ for some
$r\in\mathcal{R}_\alpha(\vec{f})(x)$ (Here $r$ may be
$\delta(x)$). By the similar argument as in getting
(5.86) we have
$$D_n(\mathfrak{M}_{\alpha,\Omega}(\vec{f})-\mathfrak{M}_{\alpha,\Omega}(\vec{f_j}))(x)
\leq J_{1,j}(x)+J_{2,j}(x)\eqno(5.95)$$
for almost every $x\in B^{-}\backslash(K^j\cup B^j)$ with
$j\geq N_1$. Combining (5.95) with (5.87) and(5.89) yields
that
$$\begin{array}{ll}
&\quad|\{x\in B^{-}\backslash(K^j\cup B^j):D_n(\mathfrak{M}_{\alpha,\Omega}(\vec{f_j})-\mathfrak{M}_{\alpha,\Omega}(\vec{f}))(x)\geq\epsilon\}|\\
&\leq |\{x\in B^{-}\backslash(K^j\cup B^j):J_{1,j}(x)+J_{2,j}(x)\geq\epsilon\}|\rightarrow0\ \ {\rm as}\ j\rightarrow\infty.
\end{array}\eqno(5.96)$$
On the other hand, by the similar argument as in getting
(5.92) we have
$$\|D_n(\mathfrak{M}_{\alpha,\Omega}(\vec{f_j})-\mathfrak{M}_{\alpha,\Omega}(\vec{f}))\|_{L^q(B^{-}\cap K^j)}\rightarrow0\ \ {\rm as}\ j\rightarrow\infty.\eqno(5.97)$$
It follows from (5.96)-(5.97) and (5.84) that
$$\|D_n(\mathfrak{M}_{\alpha,\Omega}(\vec{f_j})-\mathfrak{M}_{\alpha,\Omega}(\vec{f}))\|_{L^q(B^{-})}\rightarrow0\ \ {\rm as}\ j\rightarrow\infty.\eqno(5.98)$$
(5.98) together with (5.84) and (5.93) yields (5.76). This
finishes the proof of Theorem \ref{t:alpha>0continuous}. 
\end{proof}



\begin{thebibliography}{99}


\bibitem{AK}D. Aalto and J. Kinnunen,
\emph{Maximal functions in Sobolev spaces},
Sobolev Spaces in Mathematics I, International Mathematical Series. 2009.


\bibitem{ACL}J. M.  Aldaz, L. Colzani and J. P\'{e}rez L\'{a}zaro,
\emph{Optimal bounds on the modulus of continuity of the uncentered Hardy-Littlewood maximal function},
J. Geom. Anal. {\bf 22} (2012), 132--167.


\bibitem{AL}J. M. Aldaz and J. P\'{e}rez L\'{a}zaro,
\emph{Functions of bounded variation, the derivative of the one dimensional maximal
function, and applications to inequalities}, Trans. Amer. Math. Soc. {\bf 359} (5) (2007), 2443--2461.


\bibitem{AL4}{F. J. Almgren, and E. H. Lieb}, \emph{Symmetric decreasing
rearrangement is sometimes continuous},J. Amer. Math. Soc. {2}, 1989, 683--773.


\bibitem{BL1}S. Barza and M. Lind, \emph{A new variational characterization of Sobolev spaces},
J. Geom. Anal. {\bf 25} (4) (2015), 2185--2195.

\bibitem{CM1}E. Carneiro and J. Madrid, \emph{Derivative bounds
for fractional maximal functions}, Trans. Amer. Math. Soc. (to appear) (2016).

\bibitem{CM2}E. Carneiro and D. Moreira, \emph{On the regularity of
maximal operators},
Proc. Amer. Math. Soc. {\bf 136} (12) (2008), 4395--4404.

\bibitem{CS}E. Carneiro and B.F. Svaiter,
\emph{On the variation of maximal operators of convolution type},
J. Funct. Anal. {\bf 265} (2013), 837--865.

\bibitem{CX}X. Chen, Q. Xue\emph{Weighted estimates for a class of multilinear fractional type operators},
J. Math. Anal. Appl. \textbf{362} (2010), 355--373
\bibitem{CJ}M. Chirst and J.L. Journ\'{e},
\emph{Polynomial growth estimates for multilinear singular integral operators},
Acta Math. {\bf 159} (1987), 51--80.



\bibitem{GT}D. Gilbarg and N.S. Trudinger,
Elliptic partial differential equations of second order,
2nd edn, Springer-Verlage, Berlin, 1983.


\bibitem{GTo1}L. Grafakos and R. H. Torres,
Mulitlinear Calder\'{o}n-Zygmund theory, Adv. Math. {\bf 165} (1) (2002), 124--164.


\bibitem{GTo2}L. Grafakos and R. H. Torres,
Maximal operator and weighted norm inequalities for multilinear singular integrals,
Indiana. Univ. Math. J. {\bf 51} (5) (2002), 1261--1276.



\bibitem{HM}P. Hajlasz and J. Maly,
On approximate differentiability of the maximal function,
Proc. Amer. Math. Soc. {\bf 138} (1) (2010), 165--174.

\bibitem{HO}{P. Haj{\l}asz and J. Onninen},
On boundedness of maximal functions in Sobolev spaces,
Ann. Acad. Sci. Fenn. Math. {\bf 29} (1) (2004), 167--176.

\bibitem{HKKT}{T. Heikkinen, J. Kinnunen, J. Korvenp\"{a}\"{a} and H. Tuominen},
Regularity of the local fractional maximal function,
Bull. London Math. Soc. {\bf 53} (1) (2015), 127--154.


\bibitem{KSt}C.E. Kenig and E.M. Stein, Multilinear estimates and fractional integration,
Math. Res. Lett. {\bf 6} (1999), 1--15.


\bibitem{Ki}J. Kinnunen, The Hardy-Littlewood maximal function
of a Sobolev function, Israel J. Math. {\bf 100} (1997), 117--124.


\bibitem{KL}J. Kinnunen and P. Lindqvist,
The derivative of the maximal function,
J. Reine Angew. Math. {\bf 503} (1998), 161--167.


\bibitem{KM}{J. Kinnunen and O. Martio},
Hardy's inequalities for Sobolev functions,
Math. Res. Lett. {\bf 4} (4) (1997), 489--500.



\bibitem{KS}J. Kinnunen and E. Saksman,
Regularity of the fractional maximal function,
Bull. London Math. Soc. {\bf 35} (4) (2003), 529--535.

\bibitem{Ko1}S. Korry, A class of bounded operators on Sobolev spaces,
Arch. Math. (Basel) {\bf 82} (1) (2004), 40--50.


\bibitem{Ko2}S. Korry,
Boundedness of Hardy-Littlewood maximal operator in the framework of Lizorkin-Triebel spaces,
Rev. Mat. Complut. {\bf 15} (2) (2002), 401--416.


\bibitem{Ku}O. Kurka, On the variation of the Hardy-Littlewood maximal
function, Ann. Acad. Sci. Fenn. Math. {\bf 40} (2015), 109--133.


\bibitem{KK}{N. Kruglyak and E. A. Kuznetsov},
Sharp integral estimates for the fractional maximal function and intepolation,
Ark. Mat. {\bf 44} (2) (2006), 309--326.


\bibitem{LMPT}{M.T. Lacey, K. Moen, C. P\'{e}rez and R.H. Torres},
Sharp weighted bounds for fractional integral operators,
J. Funct. Anal. {\bf 259} (5) (2010), 1073--1097.



\bibitem{LOPTT}{A. K. Lerner, S. Ombrosi, C. P\'{e}rez, R. H. Torres and R. Trujillo-Gonz\'{a}lez},
New maximal functions and multiple weighted for the multilinear Calder\'{o}n-Zygmund theory,
Adv. Math. {\bf 220} (2009), 1222--1264.


\bibitem{LCW}F. Liu, T. Chen and H. Wu,
A note on the endpoint regularity of the Hardy-Littlewood maximal functions,
Bull. Austra. Math. Soc. {\bf 94} (2016), 121-130.

\bibitem{LM}F. Liu and S. Mao,
On the regularity of the one-sided Hardy-Littlewood maximal functions,
Czech. Math. J. (to appear) (2016).

\bibitem{LW1}F. Liu and H. Wu,
On the regularity of the multisublinear maximal functions,
Canad. Math. Bull. {\bf 58} (4) (2015), 808--817.

\bibitem{LW2}F. Liu and H. Wu,
Endpoint regularity of multisublinear fractional maximal functions,
Canad. Math. Bull. (to appear) (2016).


\bibitem{Lu1}H. Luiro,
Continuity of the maximal operator in Sobolev spaces,
Proc. Amer. Math. Soc. {\bf 135} (1) (2007), 243--251.


\bibitem{Lu2}H. Luiro, On the regularity of the Hardy-Littlewood
maximal operator on subdomains of $\mathbb{R}^n$,
Proc. Edinburgh Math. Soc. {\bf 53} (1) (2010), 211--237.

\bibitem{Na}L. P. Natanson, Theory of Functions of a Real Variable,
Frederick Ungar Publishing Co., New York (1950).

\bibitem{HKT}P. Haj\l asz, P. Koskela, H. Tuominen, Sobolev embeddings, extensions and measure density condition, J. Func. Anal., {\bf{254}} (2008), 1217-1234.

\bibitem{Shv} P. Shvartsman, On Sobolev extension domains in $\R^n$, J. Funct. Anal. {\bf 258} (7) (2010), 2205--2245.

\bibitem{Ta}H. Tanaka, A remark on the derivative of the
one-dimensional Hardy-Littlewood maximal function,
Bull. Austral. Math. Soc. {\bf 65} (2) (2002), 253--258.

\bibitem{TD} Tomasz Dlotko, Sobolev spaces and embedding theorems, http://conteudo.icmc.usp.br/pessoas/andcarva\\ /sobolew.pdf, 1-15.

\end{thebibliography}
\end{document}